\theoremstyle{change}%
\newtheorem{defi}{Definition:}[section]%
\newtheorem{theo}[defi]{Theorem:}%
\newtheorem{prop}[defi]{Proposition:}%
\newtheorem{lemma}[defi]{Lemma:}%
\newtheorem{cor}[defi]{Corollary:}%
{\theorembodyfont{\rmfamily} \newtheorem{remark}[defi]{Remark:}}%
{\theorembodyfont{\rmfamily} \newtheorem{example}[defi]{Example:}}%
\newenvironment{proof}
  {{\bf Proof:}}
  {\qquad \hspace*{\fill} $\Box$}%
\newcommand{\id}{\operatorname{id}}%
\newcommand{\N}{\mathbb{N}}%
\newcommand{\Z}{\mathbb{Z}}%
\newcommand{\R}{\mathbb{R}}%
\newcommand{\AC}{\mathcal{A}}%
\newcommand{\CC}{\mathcal{C}}%
\newcommand{\DC}{\mathcal{D}}%
\newcommand{\EC}{\mathcal{E}}%
\newcommand{\HC}{\mathcal{H}}%
\newcommand{\UC}{\mathcal{U}}%
\newcommand{\VC}{\mathcal{V}}%
\newcommand{\NC}{\mathcal{N}}%
\newcommand{\LC}{\mathcal{L}}%
\newcommand{\FC}{\mathcal{F}}%
\newcommand{\BC}{\mathcal{B}}%
\newcommand{\PC}{\mathcal{P}}%
\newcommand{\QC}{\mathcal{Q}}%
\newcommand{\RC}{\mathcal{R}}%
\newcommand{\SC}{\mathcal{S}}%
\newcommand{\sep}{\operatorname{sep}}%
\newcommand{\spn}{\operatorname{span}}%
\newcommand{\cov}{\operatorname{cov}}%
\newcommand{\tp}{\operatorname{top}}%
\newcommand{\supp}{\operatorname{supp}}%
\newcommand{\inner}{\operatorname{int}}%
\newcommand{\cl}{\operatorname{cl}}%
\newcommand{\dist}{\operatorname{dist}}%
\newcommand{\Mis}{\operatorname{M}}%
\newcommand{\Lip}{\operatorname{Lip}}%
\newcommand{\eps}{\varepsilon}%
\newcommand{\tm}{\times}%
\newcommand{\mss}{\!\!\!}%
\newcommand{\rmd}{\mathrm{d}}%
\begin{document}

\title{Metric Entropy of Nonautonomous Dynamical Systems}%
\author{Christoph Kawan\footnote{The author was supported by DFG grant Co 124/17-2 within DFG priority program 1305.}\ \footnote{Institut f\"{u}r Mathematik, Universit\"{a}t Augsburg, 86159 Augsburg, Germany;
e-mail: christoph.kawan@math.uni-augsburg.de}}%
\maketitle

\begin{abstract}
We introduce the notion of metric entropy for a nonautonomous dynamical system given by a sequence $(X_n,\mu_n)$ of probability spaces and a sequence of measurable maps $f_n:X_n\rightarrow X_{n+1}$ with $f_n\mu_n=\mu_{n+1}$. This notion generalizes the classical concept of metric entropy established by Kolmogorov and Sinai, and is related via a variational inequality to the topological entropy of nonautonomous systems as defined by Kolyada, Misiurewicz and Snoha. Moreover, it shares several properties with the classical notion of metric entropy. In particular, invariance with respect to appropriately defined isomorphisms, a power rule, and a Rokhlin-type inequality are proved.%
\end{abstract}
{\small{\bf Keywords:} Nonautonomous dynamical systems; topological entropy; metric entropy}%

\section{Introduction}%

In the theory of dynamical systems, entropy is an invariant which measures the exponential complexity of the orbit structure of a system. Undoubtedly, the most important notions of entropy are metric entropy for measure-theoretic dynamical systems, sometimes also named \emph{Kolmogorov-Sinai entropy} by its inventors, and topological entropy for topological systems (cf.~Kolmogorov \cite{Kol}, Sinai \cite{Sin} and Adler et al.~\cite{AKM}). There exists a huge variety of modifications and generalizations of these two basic notions. However, most of these only apply to systems which are governed by time-invariant dynamical laws, so-called autonomous dynamical systems. In the literature, one basically finds two exceptions. In the theory of random dynamical systems, which are nonautonomous dynamical systems described by measurable skew-products, both notions of entropy, metric and topological, have been defined and extensively studied (see, e.g., \cite{Bog,FSt,Liu,LQi,Zha}). In particular, the classical variational principle which relates the two notions of entropy to each other, has been adapted to their random versions by Bogensch\"{u}tz \cite{Bog}. The second exception is the quantity introduced in Kolyada and Snoha \cite{KSn}, the topological entropy of a nonautonomous system given as a discrete-time deterministic process on a compact topological space. The theory founded in \cite{KSn} has been further developed in \cite{HWZ,HW2,KMS,Mou,OW2,ZCh,ZLX,ZZH} by several authors. In some of these articles, the definition of entropy has been generalized, in particular to continuous-time systems, to systems with noncompact state space, systems with time-dependent state space, and to local processes. Besides that, there have been other independent approaches (see, e.g., \cite{OWi,PMa}), which essentially lead to the same notion. Both of the nonautonomous versions of entropy, random and deterministic, are intimately related to each other but nevertheless, one cannot draw direct conclusions from the well-developed random theory to the deterministic one except for generic statements (saying that something holds for almost every deterministic system in a large class of such systems parametrized by a random parameter).%

The reason why the deterministic nonautonomous theory of entropy is still quite poor-developed in particular lies in the fact that the notion of metric entropy (together with a variational principle) has not yet successfully been established in that theory. To the best of my knowledge, the only approach in this direction can be found in Zhu et al.~\cite{ZLX}. This work shows that one of the obstacles in establishing a reasonable notion of metric entropy which allows for a variational principle lies in the proof of the power rule which relates the entropies of the time-$t$-maps (the powers of the system) to that of the time-one-map. The aim of this paper is to introduce the notion of metric entropy for nonautonomous measure-theoretic dynamical systems together with a formalism which allows for a power rule and at least the easier part of the variational principle.%

We briefly describe the contents of the paper. In Section \ref{sec_prelims}, we recall the notion of topological entropy for a nonautonomous dynamical system as defined in \cite{KMS} by Kolyada, Misiurewicz and Snoha. This notion of entropy generalizes the one in \cite{KSn} by replacing the state space $X$ (a compact metric space) by a whole sequence $X_n$ of such spaces. The process is then given by a sequence of continuous maps $f_n:X_n\rightarrow X_{n+1}$. As in the classical theory, three equivalent characterizations of entropy are available, via open covers, via spanning sets, or via separated sets. However, one crucial point here is that in the open cover definition sequences of open covers for the spaces $X_n$ with Lebesgue numbers bounded away from zero have to be considered. In order to prove the power rule for this entropy, the additional assumption that the sequence $f_n$ be uniformly equicontinuous is necessary.%

In Section \ref{sec_me}, the metric entropy is defined. Here the system is given by a sequence $f_n:X_n\rightarrow X_{n+1}$ of measurable maps between probability spaces $(X_n,\mu_n)$ such that the sequence $\mu_n$ of measures is preserved in the sense that $f_n\mu_n=\mu_{n+1}$. The metric entropy with respect to a sequence of finite measurable partitions of the spaces $X_n$ can be defined in the usual way (with the obvious modifications), and has similar properties as in the autonomous case. Similarly as in the topological situation (the definition of entropy via sequences of covers), one does not get a reasonable quantity by considering all sequences of partitions. One problem is that information about the initial state can be generated merely due to the fact that the partitions in such a sequence become finer very rapidly. Hence, we have to restrict the class of admissible sequences of partitions, which is done in an axiomatic way by requiring some of the properties that are satisfied in the topological setting by the class of all sequences of open covers with Lebesgue numbers bounded away from zero. This leads to the notion of an \emph{admissible class} which enjoys some nice and natural properties. For instance, in the case of an autonomous measure-preserving system, one can consider the smallest admissible class which contains all constant sequences of partitions, which leads to the classical notion of metric entropy. Several properties of the classical metric entropy carry over to its nonautonomous generalization. In particular, we can establish invariance under appropriately defined isomorphisms, an analogue of the Rokhlin inequality, and a power rule.%

In Section \ref{sec_vi}, we prove for equicontinuous systems the inequality between metric and topological entropy which establishes one part of the variational principle. We adapt the arguments of Misiurewicz's elegant proof from \cite{Mis} by defining an appropriate admissible class of sequences of partitions which is designed in such a way that Misiurewicz's arguments can be applied to its members. This class depends on the given invariant sequence of measures. In general, it might be very small, so that our variational inequality would not give any meaningful information. For this reason we establish different stability conditions for invariant sequences of measures which guarantee that the associated Misiurewicz class contains sequences of arbitrarily fine partitions. These stability conditions capture the intuitive idea that the initial measure $\mu_1$ should not be deformed too much by pushing it forwards by the maps $f_1^n = f_n \circ \cdots \circ f_1$, so that such sequences become an appropriate nonautonomous substitute of invariant measures in the autonomous theory. In particular, we show that the expanding systems studied in Ott, Stenlund, and Young \cite{OSY} satisfy such a stability condition.%

\section{Preliminaries}\label{sec_prelims}%

\subsection{Notation}%

By a \emph{nonautonomous dynamical system} (short NDS) we understand a deterministic process $(X_{1,\infty},f_{1,\infty})$, where $X_{1,\infty} = \{X_n\}_{n\geq1}$ is a sequence of sets and $f_n:X_n\rightarrow X_{n+1}$ a sequence of maps. For all integers $k,n\in\N$ we write%
\begin{equation*}
  f_k^0 := \id_{X_k},\quad f_k^n := f_{k+(n-1)} \circ \cdots \circ f_{k+1} \circ f_k,\quad f_k^{-n} := (f_k^n)^{-1}.%
\end{equation*}
The last notation will only be applied to sets. We do not assume that the maps $f_n$ are invertible. The trajectory of a point $x\in X_1$ is the sequence $\{f_1^n(x)\}_{n\geq0}$. By $f_{k,\infty}$ we denote the sequence $\{f_k,f_{k+1},f_{k+2},\ldots\}$ which defines a NDS on $X_{k,\infty} = \{X_k,X_{k+1},X_{k+2},\ldots\}$.%

We consider two categories of systems, metric and topological. In a metric system, the sets $X_n$ are probability spaces and the maps $f_n$ are measure-preserving. That is, each $X_n$ is endowed with a $\sigma$-algebra $\AC_n$ and a probability measure $\mu_n$ such that the maps $f_n$ are measurable and $f_n\mu_n = \mu_{n+1}$ for all $n\geq1$, where $f_n\mu_n$ denotes the push-forward $(f_n\mu_n)(A) = \mu_n(f_n^{-1}(A))$ for all $A\in\AC_{n+1}$. In this case, we call $\mu_{1,\infty} = \{\mu_n\}_{n\geq1}$ an $f_{1,\infty}$-invariant sequence. In a topological system, each $X_n$ is a compact metric space and the maps $f_n$ are continuous.%

If $X$ is a compact topological space and $\UC$ an open cover of $X$, we denote by $\NC(\UC)$ the minimal cardinality of a finite subcover. If $\UC_1,\ldots,\UC_n$ are open covers of $X$, we write $\bigvee_{i=1}^n\UC_i$ for their join, i.e., the open cover consisting of all the intersections $U_{i_1} \cap U_{i_2} \cap \ldots \cap U_{i_n}$ with $U_{i_j}\in\UC_j$.%

In a metric space $(X,\varrho)$, we denote the open ball centered at $x$ with radius $\eps$ by $B(x,\eps)$ or $B(x,\eps;\varrho)$. We write $\dist(x,A)$ for the distance from a point $x$ to a nonempty set $A$, i.e., $\dist(x,A)=\inf_{a\in A}\varrho(x,a)$. The closure, the interior, and the boundary of a set $A$ are denoted by $\cl A$, $\inner A$ and $\partial A$, respectively.%

Recall that the Lebesgue number of an open cover $\UC$ of a compact metric space $X$ is defined as the maximal $\eps>0$ such that every $\eps$-ball in $X$ is contained in one of the members of $\UC$.%

\subsection{Topological Entropy}\label{subsec_te} In this subsection, we recall the notion of entropy for a topological NDS $(X_{1,\infty},f_{1,\infty})$, as defined in Kolyada et al.~\cite{KMS}. As in the classical autonomous theory, three equivalent definitions are available. We denote the metric of $X_k$ by $\varrho_k$ and define on each of the spaces $X_k$ a class of Bowen-metrics by%
\begin{equation*}
  \varrho_{k,n}(x,y) := \max_{0\leq i\leq n-1}\varrho_{k+i}\left(f_k^i(x),f_k^i(y)\right)\qquad (n\in\N).%
\end{equation*}
It is easy to see that $\varrho_{k,n}$ is a metric on $X_k$ which is topologically equivalent to $\varrho_k$. In order to define the topological entropy of $f_{1,\infty}$, we only use the metrics $\varrho_{1,n}$. A subset $E\subset X_1$ is called $(n,\eps)$-separated if any two distinct points $x,y\in E$ satisfy $\varrho_{1,n}(x,y)>\eps$. A set $F\subset X_1$ $(n,\eps)$-spans another set $K\subset X_1$ if for every $x\in K$ there is $y\in F$ with $\varrho_{1,n}(x,y)\leq\eps$. We let $r_{\sep}(n,\eps,f_{1,\infty})$ denote the maximal cardinality of an $(n,\eps)$-separated subset of $X_1$ and $r_{\spn}(n,\eps,f_{1,\infty})$ the minimal cardinality of a set which $(n,\eps)$-spans $X_1$, and we define%
\begin{eqnarray*}
  h_{\sep}(f_{1,\infty}) &:=& \lim_{\eps\searrow0}\limsup_{n\rightarrow\infty}\frac{1}{n}\log r_{\sep}\left(n,\eps,f_{1,\infty}\right),\\
  h_{\spn}(f_{1,\infty}) &:=& \lim_{\eps\searrow0}\limsup_{n\rightarrow\infty}\frac{1}{n}\log r_{\spn}\left(n,\eps,f_{1,\infty}\right).%
\end{eqnarray*}
The corresponding limits in $\eps$ exist, since the quantities $r_{\sep}(n,\eps,f_{1,\infty})$ and $r_{\spn}(n,\eps,f_{1,\infty})$ are monotone (non-increasing) with respect to $\eps$, and this property carries over to their exponential growth rates. Hence, the limits can also be replaced by the corresponding suprema over all $\eps>0$. With the same arguments as in the autonomous case, one shows that the numbers $h_{\sep}(f_{1,\infty})$ and $h_{\spn}(f_{1,\infty})$ actually coincide. We call their common value the \emph{topological entropy of $f_{1,\infty}$}.%

The definition of topological entropy via open covers has to be modified a little bit in order to fit to the nonautonomous case. Consider a sequence $\UC_{1,\infty} = \{\UC_n\}$ such that $\UC_n$ is an open cover of $X_n$ for each $n\geq1$. The entropy of $f_{1,\infty}$ with respect to the sequence $\UC_{1,\infty}$ is then defined as%
\begin{equation*}
  h_{\cov}(f_{1,\infty};\UC_{1,\infty}) := \limsup_{n\rightarrow\infty}\frac{1}{n}\log\NC\left(\bigvee_{i=0}^{n-1}f_1^{-i}\UC_{i+1}\right).%
\end{equation*}
In contrast to the autonomous case, the upper limit cannot be replaced by a limit (see \cite{KSn} for a counterexample). In order to define the topological entropy of $f_{1,\infty}$ one should not take the supremum of $h_{\cov}(f_{1,\infty};\UC_{1,\infty})$ over all sequences of open covers. The problem is that the value of $h_{\cov}(f_{1,\infty};\UC_{1,\infty})$ might become arbitrarily large just by the fact that the maximal diameters of the open sets in the covers $\UC_n$ exponentially converge to zero for $n\rightarrow\infty$. In this case, information about the initial state can be obtained due to finer and finer measurements even if the system has very regular dynamics. To exclude this, we restrict ourselves to sequences of open covers with Lebesgue numbers bounded away from zero. We denote the family of all these sequences by $\LC(X_{1,\infty})$ and define%
\begin{equation*}
  h_{\cov}(f_{1,\infty}) := \sup_{\UC_{1,\infty} \in \LC(X_{1,\infty})}h_{\cov}(f_{1,\infty};\UC_{1,\infty}).%
\end{equation*}
We leave the easy proof that this number coincides with the topological entropy as defined above to the reader. In the rest of the paper, we write $h_{\tp}(f_{1,\infty})$ for the common value of $h_{\sep}(f_{1,\infty})$, $h_{\spn}(f_{1,\infty})$ and $h_{\cov}(f_{1,\infty})$.%

\begin{remark}
Note that the value of $h_{\tp}(f_{1,\infty})$ heavily depends on the metrics $\varrho_k$ in contrast to the classical autonomous situation. However, in many relevant examples, as, e.g., systems defined by time-dependent differential equations, all of these metrics come from a single metric on a possibly compact space. So in this case the dependence on the metrics disappears due to a canonical choice.%
\end{remark}

The topological entropy of an autonomous system given by a map $f$ satisfies the power rule $h_{\tp}(f^k)=k\cdot h_{\tp}(f)$ for all $k\geq1$. In order to formulate an analogue of this property for NDSs, we have to introduce for every $k\geq1$ the $k$-th power system of the NDS $(X_{1,\infty},f_{1,\infty})$. This is the system $(X^{[k]}_{1,\infty},f^{[k]}_{1,\infty})$, where%
\begin{equation*}
  X^{[k]}_{1,\infty} := \left\{X_{(n-1)k+1}\right\}_{n\geq1},\qquad f^{[k]}_{1,\infty} := \left\{f_{(n-1)k+1}^k\right\}_{n\geq1}.%
\end{equation*}

In case that the spaces $X_n$ coincide, the following result can be found in \cite[Lem.~4.2]{KSn}. Since the proof for the general case works analogously, we omit it.%

\begin{prop}\label{prop_powerrule}
For every $k\geq1$ it holds that%
\begin{equation*}
  h_{\tp}\left(f^{[k]}_{1,\infty}\right) \leq k \cdot h_{\tp}\left(f_{1,\infty}\right).%
\end{equation*}
\end{prop}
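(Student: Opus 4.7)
The plan is to exploit the spanning-set characterization, where the inequality reduces to a direct Bowen-metric comparison. Fix $k\ge 1$. From the definition $f^{[k]}_{1,\infty} = \{f_{(n-1)k+1}^k\}_{n\ge 1}$, the first $n$ maps of the power system compose on $X_1$ to give $f_1^{nk}$. Hence the Bowen metric $\tilde\varrho_{1,n}$ of the power system on $X_1$ satisfies
\[
  \tilde\varrho_{1,n}(x,y) = \max_{0\le i\le n-1}\varrho_{ik+1}\bigl(f_1^{ik}(x),\,f_1^{ik}(y)\bigr),
\]
which is a maximum of the same type of quantities as $\varrho_{1,nk}(x,y)$, but taken over the subset $\{0,k,2k,\dots,(n-1)k\}$ of $\{0,1,\dots,nk-1\}$. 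Consequently $\tilde\varrho_{1,n} \le \varrho_{1,nk}$ pointwise on $X_1 \times X_1$.

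This pointwise comparison yields the key combinatorial bound: every $(nk,\eps)$-spanning set for $f_{1,\infty}$ is automatically $(n,\eps)$-spanning for $f^{[k]}_{1,\infty}$, so
\[
  r_{\spn}\bigl(n,\eps,f^{[k]}_{1,\infty}\bigr) \le r_{\spn}(nk,\eps,f_{1,\infty}).
\]
Taking $\frac{1}{n}\log$ and $\limsup_{n\to\infty}$ on both sides, and using that the limsup along the subsequence $\{nk\}_{n\ge 1}$ is dominated by the full limsup, I obtain
\[
  \limsup_{n\to\infty}\frac{1}{n}\log r_{\spn}\bigl(n,\eps,f^{[k]}_{1,\infty}\bigr) \;\le\; k\cdot \limsup_{m\to\infty}\frac{1}{m}\log r_{\spn}(m,\eps,f_{1,\infty}).
\]
Letting $\eps\searrow 0$ on both sides then delivers $h_{\tp}(f^{[k]}_{1,\infty}) \le k\cdot h_{\tp}(f_{1,\infty})$.

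There is essentially no obstacle in this argument. The only point that deserves care is the indexing in the definition of $f^{[k]}_{1,\infty}$, which is arranged precisely so that $n$-fold iteration of the power system on $X_1$ reproduces $f_1^{nk}$; without this, the Bowen-metric comparison would not be clean. It is also worth noting that no equicontinuity hypothesis enters for this direction of the power rule — equicontinuity of $f_{1,\infty}$ would be needed only for the matching lower bound $h_{\tp}(f^{[k]}_{1,\infty}) \ge k\cdot h_{\tp}(f_{1,\infty})$, which is not asserted in the proposition.
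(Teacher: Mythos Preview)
Your argument is correct. The pointwise inequality $\tilde\varrho_{1,n} \le \varrho_{1,nk}$ is verified exactly as you say, and the passage from spanning-set counts to entropy via the subsequence bound is clean. The paper itself omits the proof of this proposition entirely, referring to \cite[Lem.~4.2]{KSn} for the case of a constant sequence of spaces and remarking that the general case is analogous; your spanning-set argument is precisely the standard one that reference would supply, so there is nothing to compare.
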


In general, the converse inequality in the above proposition fails to hold (see \cite{KSn} for a counterexample). However, if we assume that the family $\{f_n\}$ is equicontinuous, equality does hold. Equicontinuity in this context means uniform equicontinuity, i.e., for every $\eps>0$ there exists $\delta>0$ such that $\varrho_n(x,y) < \delta$ for any $x,y\in X_n$, $n\in\N$, implies $\varrho_{n+1}(f_n(x),f_n(y))<\eps$. In \cite[Lem.~4.4]{KSn} this is proved for the case when the spaces $X_n$ all coincide, by using the definition via separated sets. Here we present a different proof using the definition via sequences of open covers, since we want to carry over the arguments later to the proof of the power rule for metric entropy.%

\begin{lemma}\label{lem_topologicalrefinements}
Let $\UC_{1,\infty}\in \LC(X_{1,\infty})$ and assume that $f_{1,\infty}$ is equicontinuous. Then for each $m\geq1$ the sequence $\VC_{1,\infty}$, defined by $\VC_n := \bigvee_{i=0}^{m-1}f_n^{-i}\UC_{n+i}$, is an element of $\LC(X_{1,\infty})$.%
\end{lemma}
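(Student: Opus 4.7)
The plan is to exhibit a single positive number that lower-bounds the Lebesgue numbers of all the $\VC_n$, built from the uniform Lebesgue bound on $\UC_{1,\infty}$ and a finite iteration of the equicontinuity modulus.

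First I would fix $\delta_0 > 0$ such that every $\UC_n$ has Lebesgue number at least $\delta_0$; this exists because $\UC_{1,\infty} \in \LC(X_{1,\infty})$. Then, using uniform equicontinuity of $\{f_n\}$, I would pick inductively a finite decreasing sequence $\delta_0 \geq \delta_1 \geq \cdots \geq \delta_{m-1} > 0$ so that for every $n \geq 1$ and every $0 \leq i \leq m-1$, $\varrho_n(x,y) < \delta_{i}$ implies $\varrho_{n+1}(f_n(x),f_n(y)) < \delta_{i-1}$. A straightforward induction on $i$ then yields: for every $n$, every $0 \leq i \leq m-1$, and every $x,y \in X_n$ with $\varrho_n(x,y) < \delta_{m-1}$, one has $\varrho_{n+i}(f_n^i(x), f_n^i(y)) < \delta_0$.

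Set $\delta := \delta_{m-1}$. Now fix $n$ and any $x \in X_n$, and consider the ball $B(x,\delta) \subset X_n$. By the previous step, $f_n^i(B(x,\delta)) \subset B(f_n^i(x), \delta_0)$ for each $0 \leq i \leq m-1$. Since $\delta_0$ is a Lebesgue number for $\UC_{n+i}$, there exists $U_i \in \UC_{n+i}$ containing $B(f_n^i(x), \delta_0)$, hence $B(x,\delta) \subset f_n^{-i}(U_i)$. Intersecting over $i$ gives
\begin{equation*}
  B(x,\delta) \subset \bigcap_{i=0}^{m-1} f_n^{-i}(U_i) \in \VC_n,
\end{equation*}
so $\VC_n$ has Lebesgue number at least $\delta$. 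Since $\delta$ was chosen independently of $n$, this shows $\VC_{1,\infty} \in \LC(X_{1,\infty})$.

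There is no serious obstacle here; the only subtlety is that the equicontinuity modulus must be chosen uniformly in $n$ (which is exactly what uniform equicontinuity provides) and that $m$ is fixed, so the finite iteration of moduli terminates with a strictly positive $\delta_{m-1}$. Without uniform equicontinuity, the $\delta_i$ would depend on $n$ and the argument would collapse; this is precisely why equicontinuity is the hypothesis used to salvage the power rule in the nonautonomous setting.
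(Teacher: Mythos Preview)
Your proof is correct and follows essentially the same route as the paper's: both establish that the family $\{f_n^i : n \geq 1,\ 0 \leq i \leq m-1\}$ is uniformly equicontinuous (the paper states this as an easy consequence, while you spell out the inductive modulus construction) and then conclude that a single $\delta$-ball in $X_n$ lands inside some member of $\VC_n$. The only cosmetic slip is that your inductive condition should range over $1 \leq i \leq m-1$ rather than $0 \leq i \leq m-1$, since $\delta_{-1}$ is undefined; otherwise the argument is sound.
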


\begin{proof}
Let $\eps>0$ be a common lower bound for the Lebesgue numbers of the covers $\UC_n$. Then, for each $n\geq1$, $\eps$ is also a lower bound for the Lebesgue number of $\VC_n$ with respect to the Bowen-metric $\varrho_{n,m}$. This is proved as follows: Let $x\in X_n$ and assume that $\varrho_{n,m}(x,y)<\eps$. Then $f_n^i(y)$ is contained in the ball $B(f_n^i(x),\eps;\varrho_{n+i})$ for $i=0,1,\ldots,m-1$. Since $\eps$ is a lower bound of the Lebesgue number of $\UC_{n+i}$ for all $i$, we find sets $U_i\in\UC_{n+i}$ such that $B(f_n^i(x),\eps;\varrho_{n+i})\subset U_i$ for $i=0,1,\ldots,m-1$, which implies that%
\begin{eqnarray*}
  B(x,\eps;\varrho_{n,m}) &\subset& U_0 \cap f_n^{-1}(U_1) \cap f_n^{-2}(U_2) \cap \ldots \cap f_n^{-(m-1)}(U_{m-1})\\
  &\in& \bigvee_{i=0}^{m-1}f_n^{-i}\UC_{n+i} = \VC_n.%
\end{eqnarray*}
It is easy to see that from equicontinuity of $f_{1,\infty}$ it follows that also the family $\{f_n^i : n\geq1,\ i = 0,1,\ldots,m-1\}$ is equicontinuous. Hence, we can find $\delta>0$ such that $\varrho_n(x,y)<\delta$ implies $\varrho_{n+i}(f_n^i(x),f_n^i(y))<\eps$ for all $n\geq1$ and $i=0,1,\ldots,m-1$. Therefore, every Bowen-ball $B(x,\eps;\varrho_{n,m})$ contains the $\delta$-ball $B(x,\delta;\varrho_n)$, which shows that $\delta$ is a lower bound for the Lebesgue numbers of the covers $\VC_n$.%
\end{proof}

\begin{lemma}\label{lem_timediscgeneral}
Let $\{a_n\}_{n\geq1}$ be a monotonically increasing sequence of real numbers. Then for every $k\geq1$ it holds that%
\begin{equation*}
  \limsup_{n\rightarrow\infty}\frac{a_n}{n} = \limsup_{n\rightarrow\infty}\frac{a_{nk}}{nk}.%
\end{equation*}
\end{lemma}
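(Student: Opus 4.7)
The plan is to prove the two inequalities separately. The inequality $\limsup_{n\to\infty} a_{nk}/(nk) \leq \limsup_{n\to\infty} a_n/n$ is trivial: the left-hand side is the limsup of the sequence $\{a_m/m\}$ restricted to the subsequence of indices $m=nk$, and passing to a subsequence cannot increase the limsup.

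For the reverse inequality $\limsup_{n\to\infty} a_n/n \leq \limsup_{n\to\infty} a_{nk}/(nk)$, I would use monotonicity via a sandwich. For each $n\geq 1$, write $q(n) := \lfloor n/k \rfloor$, so that $q(n)k \leq n \leq (q(n)+1)k$. Since $a_\cdot$ is monotonically increasing,
\begin{equation*}
  a_n \leq a_{(q(n)+1)k},
\end{equation*}
and dividing by $n \geq q(n) k$ gives $a_n/n \leq a_{(q(n)+1)k}/(q(n)k)$, which I rewrite as
\begin{equation*}
  \frac{a_n}{n} \leq \frac{a_{(q(n)+1)k}}{(q(n)+1)k} \cdot \frac{q(n)+1}{q(n)},
\end{equation*}
valid once $q(n)\geq 1$ and assuming the numerator $a_{(q(n)+1)k}$ is nonnegative. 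Since $(q(n)+1)/q(n)\to 1$ as $n\to\infty$, for every $\eps>0$ the factor on the right is at most $1+\eps$ for all large $n$.

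The only subtlety is the sign of $a_n$, which I handle by splitting into two cases. If $\{a_n\}$ is bounded, then as a monotone sequence it converges to a finite limit, and $a_n/n\to 0$; hence both limsups equal $0$. Otherwise $a_n\to+\infty$, so eventually $a_n>0$ and the multiplicative inequality above is valid with a nonnegative second factor. In this case, taking $\limsup$ yields
\begin{equation*}
  \limsup_{n\to\infty}\frac{a_n}{n} \leq (1+\eps)\limsup_{n\to\infty}\frac{a_{(q(n)+1)k}}{(q(n)+1)k} = (1+\eps)\limsup_{m\to\infty}\frac{a_{mk}}{mk},
\end{equation*}
since $q(n)+1$ ranges over all sufficiently large integers. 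Letting $\eps\searrow 0$ concludes the proof. I do not expect any serious obstacle; the only care needed is the case analysis on the sign/boundedness, which is why I would spell it out explicitly.
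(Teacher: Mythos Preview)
Your argument is correct and follows essentially the same route as the paper: both use that for each $n$ there is an $m$ with $mk \leq n \leq (m+1)k$, bound $a_n/n$ above by $\frac{m+1}{m}\cdot\frac{a_{(m+1)k}}{(m+1)k}$ via monotonicity, and use $(m+1)/m\to 1$. If anything, your explicit case split into ``$\{a_n\}$ bounded'' versus ``$a_n\to+\infty$'' is more careful than the paper, which passes directly from $\limsup \frac{m_l+1}{m_l}\cdot\frac{a_{(m_l+1)k}}{(m_l+1)k}$ to $\limsup \frac{a_{(m_l+1)k}}{(m_l+1)k}$ without commenting on signs.
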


\begin{proof}
It suffices to prove the inequality ``$\leq$''. To this end, consider an arbitrary sequence $\{n_l\}_{l\geq1}$ of positive integers converging to $\infty$. For every $l\geq1$ there is an $m_l\in\N_0$ with $m_lk\leq n_l \leq (m_l+1)k$, and $m_l\rightarrow\infty$. This implies%
\begin{equation*}
  \frac{1}{n_l}a_{n_l} \leq \frac{1}{m_lk}a_{(m_l+1)k}.%
\end{equation*} 
It follows that%
\begin{equation*}
  \limsup_{l\rightarrow\infty}\frac{a_{(m_l+1)k}}{m_lk} = \limsup_{l\rightarrow\infty}\frac{m_l+1}{m_l}\frac{a_{(m_l+1)k}}{(m_l+1)k} = \limsup_{l\rightarrow\infty}\frac{a_{m_lk}}{m_lk}.%
\end{equation*}
Hence, we conclude that%
\begin{equation*}
  \limsup_{l\rightarrow\infty}\frac{a_{n_l}}{n_l} \leq \limsup_{l\rightarrow\infty}\frac{a_{m_lk}}{m_lk} \leq \limsup_{m\rightarrow\infty}\frac{a_{mk}}{mk},%
\end{equation*}
which yields the desired inequality.%
\end{proof}

\begin{prop}
If the sequence $f_{1,\infty}$ is equicontinuous, then%
\begin{equation}\label{eq_powerform}
  h_{\tp}\left(f^{[k]}_{1,\infty}\right) = k \cdot h_{\tp}\left(f_{1,\infty}\right) \mbox{\quad for all\ } k\geq1.%
\end{equation}
\end{prop}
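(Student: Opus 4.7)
The plan is to establish the nontrivial direction $h_{\tp}(f^{[k]}_{1,\infty}) \geq k \cdot h_{\tp}(f_{1,\infty})$, since the reverse inequality is already Proposition \ref{prop_powerrule}. I will work with the open-cover definition, which is why Lemma \ref{lem_topologicalrefinements} was set up in precisely this form. Fix $\UC_{1,\infty} \in \LC(X_{1,\infty})$ with Lebesgue numbers bounded below by some $\eps>0$, and define a candidate cover-sequence for the power system by
\begin{equation*}
  \VC_n := \bigvee_{i=0}^{k-1} f_{(n-1)k+1}^{-i}\, \UC_{(n-1)k+1+i},
\end{equation*}
so that each $\VC_n$ is an open cover of the $n$-th space $X_{(n-1)k+1}$ of the power system.

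Next I would verify that $\VC_{1,\infty}\in\LC(X^{[k]}_{1,\infty})$. This is exactly Lemma \ref{lem_topologicalrefinements} applied uniformly: the proof of that lemma gives, from equicontinuity of $\{f_n\}$, a single $\delta>0$ (depending only on $\eps$ and $k$) such that the Lebesgue number of $\VC_n$ is at least $\delta$ for every $n$; the argument is index-agnostic, so it applies starting at any base point $(n-1)k+1$.

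Then I would carry out the bookkeeping for the joins. Since $(f^{[k]}_1)^j = f_1^{jk}$, a direct manipulation using $f_{jk+1}^i \circ f_1^{jk} = f_1^{jk+i}$ gives
\begin{equation*}
  \bigvee_{j=0}^{n-1}\, (f^{[k]}_1)^{-j}\VC_{j+1}
  \;=\; \bigvee_{j=0}^{n-1}\bigvee_{i=0}^{k-1}\, (f_1^{jk+i})^{-1}\UC_{jk+1+i}
  \;=\; \bigvee_{l=0}^{nk-1}\, f_1^{-l}\UC_{l+1}.
\end{equation*}
Setting $a_n := \log\NC(\bigvee_{l=0}^{n-1} f_1^{-l}\UC_{l+1})$, which is monotonically non-decreasing in $n$ since additional factors can only refine the join, the identity above yields $h_{\cov}(f^{[k]}_{1,\infty};\VC_{1,\infty}) = \limsup_n a_{nk}/n = k \cdot \limsup_n a_{nk}/(nk)$. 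By Lemma \ref{lem_timediscgeneral}, this equals $k \cdot \limsup_n a_n/n = k \cdot h_{\cov}(f_{1,\infty};\UC_{1,\infty})$. Taking the supremum over $\UC_{1,\infty}\in\LC(X_{1,\infty})$ and using $\VC_{1,\infty}\in\LC(X^{[k]}_{1,\infty})$ delivers $h_{\tp}(f^{[k]}_{1,\infty}) \geq k\cdot h_{\tp}(f_{1,\infty})$.

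The only genuinely delicate point is the Lebesgue-number bound, i.e., the applicability of Lemma \ref{lem_topologicalrefinements} uniformly in the base index — this is precisely where equicontinuity of the whole family $\{f_n\}$ (rather than just continuity of each $f_n$) is used. The index-shifting for the join is routine but has to be written carefully so that the cover of $X_1$ obtained from the power-system join matches the original Bowen-type join up to index $nk-1$; everything else is a direct invocation of the two preceding lemmas.
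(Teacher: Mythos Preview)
Your proposal is correct and follows essentially the same approach as the paper's own proof: define $\VC_n := \bigvee_{i=0}^{k-1} f_{(n-1)k+1}^{-i}\UC_{(n-1)k+1+i}$, use Lemma~\ref{lem_topologicalrefinements} to ensure $\VC_{1,\infty}\in\LC(X^{[k]}_{1,\infty})$, reduce the join for the power system to the original $nk$-fold join, and apply Lemma~\ref{lem_timediscgeneral} to pass from the subsequence $\limsup$ to the full one. Your explicit remark that $a_n$ is monotone (needed for Lemma~\ref{lem_timediscgeneral}) and your observation that the Lebesgue-number argument is index-agnostic are both implicit in the paper's version.
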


\begin{proof}
It suffices to prove the inequality ``$\geq$''. To this end, let $\UC_{1,\infty}\in\LC(X_{1,\infty})$. Define a sequence $\VC_{1,\infty} = \{\VC_n\}$ of open covers for $X^{[k]}_{1,\infty}$ as follows:%
\begin{eqnarray*}
  \VC_n &:=& \UC_{(n-1)k+1} \vee f_{(n-1)k+1}^{-1}\UC_{(n-1)k+2} \vee \ldots \vee f_{(n-1)k+1}^{-(k-1)}\UC_{nk}\\
         &=& \bigvee_{j=0}^{k-1}f^{-j}_{(n-1)k+1}\UC_{(n-1)k+1+j}.%
\end{eqnarray*}
Then we find%
\begin{eqnarray*}
  h_{\cov}\left(f^{[k]}_{1,\infty};\VC_{1,\infty}\right) &=& \limsup_{n\rightarrow\infty}\frac{1}{n}\log\NC\left(\bigvee_{i=0}^{n-1}f_1^{-ik}\VC_{i+1}\right)\allowdisplaybreaks\\
                                                         &=& \limsup_{n\rightarrow\infty}\frac{1}{n}\log\NC\left(\bigvee_{i=0}^{n-1}f_1^{-ik}\bigvee_{j=0}^{k-1}f^{-j}_{ik+1}\UC_{ik+1+j}\right)\allowdisplaybreaks\\
                                                         &=& \limsup_{n\rightarrow\infty}\frac{1}{n}\log\NC\left(\bigvee_{i=0}^{n-1}\bigvee_{j=0}^{k-1}f_1^{-(ik+j)}\UC_{(ik+j)+1}\right)\allowdisplaybreaks\\
                                                         &=& k\cdot\limsup_{n\rightarrow\infty}\frac{1}{nk}\log\NC\left(\bigvee_{i=0}^{nk-1}f_1^{-i}\UC_{i+1}\right)\\
                                                         &=& k\cdot h_{\cov}\left(f_{1,\infty};\UC_{1,\infty}\right).%
\end{eqnarray*}
To obtain the last equality we used Lemma \ref{lem_timediscgeneral}. By Lemma \ref{lem_topologicalrefinements}, $\VC_{1,\infty}\in\LC(X_{1,\infty}^{[k]})$, which implies%
\begin{equation*}
  h_{\tp}\left(f^{[k]}_{1,\infty}\right) \geq h_{\cov}\left(f^{[k]}_{1,\infty};\VC_{1,\infty}\right) = k\cdot h_{\cov}\left(f_{1,\infty};\UC_{1,\infty}\right).%
\end{equation*}
Since this holds for every $\UC_{1,\infty}\in\LC(X_{1,\infty})$, the desired inequality follows.%
\end{proof}

\begin{remark}
Next to the classical notion of entropy for continuous maps on compact spaces, the notion of topological entropy introduced above generalizes several other concepts of entropy. Here are three examples:%
\begin{enumerate}
\item[(i)] Topological entropy for uniformly continuous maps on noncompact metric spaces (cf.~Bowen \cite{Bow}): Consider a uniformly continuous map $f:X\rightarrow X$ on a metric space $X$. The topological entropy of $f$ is defined by%
\begin{equation*}
  h_{\tp}(f) := \sup_{K\subset X} \lim_{\eps\searrow0}\limsup_{n\rightarrow\infty}\frac{1}{n}\log r_{\spn}(n,\eps,K),%
\end{equation*}
where the supremum runs over all compact sets $K\subset X$ and $r_{\spn}(n,\eps,K)$ is the minimal cardinality of a set which $(n,\eps)$-spans $K$. Alternatively, one can take maximal $(n,\eps)$-separated subsets of $K$. If we define for each compact set $K\subset X$ a NDS $f_{1,\infty}^{(K)}$ by%
\begin{equation*}
  X_n := f^{n-1}(K),\quad f_n^{(K)} := f|_{X_n}:X_n \rightarrow X_{n+1},%
\end{equation*}
we see that $h_{\tp}(f)$ can be written as%
\begin{equation*}
  h_{\tp}(f) = \sup_{K\subset X} h_{\tp}(f_{1,\infty}^{(K)}).%
\end{equation*}

\item[(ii)] Topological sequence entropy (cf.~Goodman \cite{Goo}): Here the sequence $X_{1,\infty}$ is constant and the sequence $f_n$ is of the form $f_n = f^{k_n}$, where $f:X\rightarrow X$ is a given continuous map and $(k_n)_{n\geq1}$ an increasing sequence of integers.%

\item[(iii)] Topological entropy of random dynamical systems (cf.~Bogensch\"{u}tz \cite{Bog}): Consider a probability space $(\Omega,\FC,P)$ with an ergodic invertible transformation $\vartheta$ on $\Omega$, and a measurable space $(X,\BC)$. A mapping $\varphi:\Z\tm\Omega\tm X \rightarrow X$ such that $(\omega,x)\mapsto \varphi(n,\omega,x)$ is $\FC\otimes\BC$-measurable for all $n\in\Z$ and $\varphi(n+m,\omega,x)=\varphi(n,\vartheta^m\omega,\varphi(m,\omega,x))$ for all $n,m\in\Z$ and $(\omega,x)\in\Omega\tm X$ is called a random dynamical system on $X$ over $\vartheta$. If $X$ is a compact metric space, $\BC$ is the Borel $\sigma$-algebra of $X$, and the maps $\varphi(n,\omega,\cdot)$ are homeomorphisms, one speaks of a topological random dynamical system. If $\UC$ is an open cover of $X$, one defines for every $\omega\in\Omega$%
\begin{equation}\label{eq_rdsentropy}
  h_{\tp}(\varphi;\UC) := \lim_{n\rightarrow\infty}\frac{1}{n}\log\NC\left(\bigvee_{i=0}^{n-1}\varphi(i,\omega)^{-1}\UC\right).%
\end{equation}
From Kingman's subadditive ergodic theorem it follows that this number exists for almost every $\omega\in\Omega$ and is constant almost everywhere. Then one can take this constant value (for each $\UC$) and define the topological entropy of the random dynamical system by taking the supremum over all open covers $\UC$. If we fix one $\omega\in\Omega$ and consider the number \eqref{eq_rdsentropy}, replacing the limit by a $\limsup$, and then take the supremum over all $\UC$, we obtain the topological entropy of the NDS $(X_{1,\infty},f_{1,\infty})$ given by $X_n := X$, $f_n := \varphi(1,\vartheta^{n-1}\omega,\cdot)$.%
\end{enumerate}
\end{remark}

\begin{remark}
It is an interesting fact that not only Bowen's notion of topological entropy for uniformly continuous maps is a special case of the topological entropy for NDSs, but that for an equicontinuous NDS $(X_{1,\infty},f_{1,\infty})$ also the converse statement is true: $h_{\tp}(f_{1,\infty})$ can be regarded as the topological entropy of a uniformly continuous map, restricted to a compact noninvariant set. To see this, let $X$ be the disjoint sum of the spaces $X_n$, i.e.,%
\begin{equation*}
  X := \coprod_{n=1}^{\infty}X_n,\qquad \varrho(x,y) := \left\{\begin{array}{ll} |n-m| & \mbox{if } x \in X_n,\ y \in X_m,\ n\neq m,\\ \varrho_n(x,y) & \mbox{if } x,y\in X_n. \end{array}\right.%
\end{equation*}
Then a uniformly continuous map $f:X\rightarrow X$ is given by putting $f$ equal to $f_n$ on $X_n$, and we have%
\begin{equation*}
  h_{\tp}(f_{1,\infty}) = h_{\tp}(f,X_1).%
\end{equation*}
This observation in particular allows to conclude the power rule from the corresponding power rule for Bowen's entropy. Taking the supremum of $h_{\tp}(f,K)$ over all compact subsets $K$ of $X$ gives the quantity called the \emph{asymptotical topological entropy} of $f_{1,\infty}$ in \cite{KSn}, defined by $\lim_{n\rightarrow\infty} h_{\tp}(f_{n,\infty})$.%
\end{remark}

\section{Metric Entropy}\label{sec_me}%

In this section, we introduce the metric entropy of a NDS.%

\subsection{The Entropy with Respect to a Sequence of Partitions}

Recall that the entropy of a finite measurable partition $\PC = \{P_1,\ldots,P_k\}$ of a probability space $(X,\AC,\mu)$ is defined by%
\begin{equation*}
  H_{\mu}(\PC) := -\sum_{i=1}^k\mu(P_i)\log\mu(P_i),%
\end{equation*}
where $0 \cdot \log 0 := 0$, and satisfies $0 \leq H_{\mu}(\PC) \leq \log k$. The equality $H_{\mu}(\PC)=\log k$ holds iff all members of $\PC$ have the same measure.%

If $\PC$ and $\QC$ are two measurable partitions of $X$, the joint partition $\PC \vee \QC = \{P \cap Q : P\in\PC,\ Q\in\QC\}$ satisfies $H_{\mu}(\PC \vee \QC) \leq H_{\mu}(\PC) + H_{\mu}(\QC)$.%

Now consider a metric NDS $(X_{1,\infty},f_{1,\infty},\mu_{1,\infty})$, where $\mu_{1,\infty}$ denotes the sequence of probability measures with $f_n\mu_n = \mu_{n+1}$.  Let $\PC_{1,\infty} = \{\PC_n\}$ be a sequence such that $\PC_n$ is a finite measurable partition of $X_n$ for every $n\geq1$, and define%
\begin{equation}\label{eq_dynamicalentropyonpartition}
  h(f_{1,\infty};\PC_{1,\infty}) := \limsup_{n\rightarrow\infty}\frac{1}{n}H_{\mu_1}\left(\bigvee_{i=0}^{n-1}f_1^{-i}\PC_{i+1}\right).%
\end{equation}
We call this number the \emph{metric entropy of $f_{1,\infty}$ with respect to $\PC_{1,\infty}$}. Note that in the autonomous case this definition reduces to the usual definition of metric entropy with respect to a partition. In this case, the $\limsup$ is in fact a limit, which follows from a subadditivity argument. However, in the general case considered here, subadditivity does not necessarily hold. (In \cite{KSn}, one finds a counterexample for the topological case, which can be modified to serve as a counterexample in the metric case, since this system preserves the Lebesgue measure.) For an autonomous system given by a map $f$ with an invariant measure $\mu$ and a partition $\PC$, we also use the common notations $h_{\mu}(f;\PC)$ and $h_{\mu}(f) = \sup_{\PC}h_{\mu}(f;\PC)$.%

Several well-known properties of the entropy with respect to a partition carry over to its nonautonomous generalization. In order to formulate these properties, we have to introduce some notation. We say that a sequence $\PC_{1,\infty}$ of measurable partitions is \emph{finer} than another such sequence $\QC_{1,\infty}$ if $\PC_n$ is finer than $\QC_n$ for every $n\geq1$ (i.e., every element of $\PC_n$ is contained in an element of $\QC_n$). In this case, we write $\PC_{1,\infty} \succeq \QC_{1,\infty}$. If $\PC_{1,\infty}$ and $\QC_{1,\infty}$ are two sequences of measurable partitions, we define their join $\PC_{1,\infty}\vee\QC_{1,\infty} := \{\PC_n\vee\QC_n\}_{n\geq1}$. For a sequence $\PC_{1,\infty}$ and $m\geq1$ we define another sequence $\PC^{\langle m \rangle}_{1,\infty}(f_{1,\infty})$ by%
\begin{equation*}
  \bigvee_{i=0}^{m-1}f_1^{-i}\PC_{i+1},\quad \bigvee_{i=0}^{m-1}f_2^{-i}\PC_{i+2},\quad\ldots,\quad \bigvee_{i=0}^{m-1}f_k^{-i}\PC_{i+k},\quad \ldots%
\end{equation*}
Finally, recall the definition of conditional entropy for partitions of a probability space $(X,\AC,\mu)$. If $A,B\in\AC$ with $\mu(B)>0$, then $\mu(A|B) := \mu(A\cap B)/\mu(B)$. If $\PC$ and $\QC$ are two partitions of $X$, the conditional entropy of $\PC$ given $\QC$ is%
\begin{equation*}
  H_{\mu}(\PC|\QC) := -\sum_{Q\in\QC}\mu(Q)\sum_{P\in\PC}\mu(P|Q)\log\mu(P|Q).%
\end{equation*}
Some well-known properties of the conditional entropy are summarized in the following proposition (cf., e.g., Katok and Hasselblatt \cite{KHa}).%

\begin{prop}\label{prop_conditionalentropy}
Let $\PC$, $\QC$ and $\RC$ be partitions of $X$.%
\begin{enumerate}
\item[(i)] $H_{\mu}(\PC|\QC)=0$ iff $\QC$ is finer than $\PC$ (modulo null sets).%
\item[(ii)] $H_{\mu}(\PC\vee\QC|\RC) = H_{\mu}(\PC|\RC) + H_{\mu}(\QC|\PC \vee \RC)$.%
\item[(iii)] If $\RC$ is finer than $\QC$, then $H_{\mu}(\PC|\RC) \leq H_{\mu}(\PC|\QC)$.%
\item[(iv)] $0 \leq H_{\mu}(\PC|\QC) \leq H_{\mu}(\PC)$.%
\item[(v)] $H_{\mu}(\PC|\RC) \leq H_{\mu}(\PC|\QC) + H_{\mu}(\QC|\RC)$.%
\end{enumerate}
\end{prop}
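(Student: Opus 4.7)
The approach is textbook. I would derive all five properties from the single identity
\begin{equation*}
H_{\mu}(\PC|\QC) = -\sum_{P \in \PC,\,Q\in\QC} \mu(P \cap Q) \log \frac{\mu(P \cap Q)}{\mu(Q)}
\end{equation*}
(with the conventions $0\log 0 = 0$ and terms with $\mu(Q) = 0$ omitted), together with the concavity of $\varphi(t) := -t\log t$ on $[0,1]$. The logical order I would follow is: prove the chain rule (ii) first, then deduce (i) and (iv), then the monotonicity (iii), and finally assemble (v) from (ii) and (iii). Choosing this order is important because it keeps the argument non-circular.

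For (ii), I would factor $\mu(P\cap Q\cap R)/\mu(R) = \mu(P|R)\cdot\mu(Q|P\cap R)$, take logarithms, and split the resulting triple sum into two pieces. In the first piece, summing over $Q\in\QC$ collapses $\mu(P\cap Q\cap R)$ to $\mu(P\cap R)$ and reproduces $H_{\mu}(\PC|\RC)$; the second piece is, by definition, $H_{\mu}(\QC|\PC\vee\RC)$. Statement (i) follows immediately from the definition: $\varphi \geq 0$ on $[0,1]$ with equality only at $\{0,1\}$, so $H_{\mu}(\PC|\QC)=0$ forces $\mu(P|Q)\in\{0,1\}$ for every $Q$ with $\mu(Q)>0$, which says exactly that each atom of $\QC$ lies modulo null sets inside a single atom of $\PC$; the converse is immediate.

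For (iv) and (iii), the common tool is Jensen's inequality for the concave $\varphi$. Writing $\mu(P) = \sum_Q \mu(Q)\mu(P|Q)$ and applying Jensen gives $\varphi(\mu(P)) \geq \sum_Q \mu(Q)\varphi(\mu(P|Q))$; summing over $P$ yields the upper bound in (iv), while the lower bound is the pointwise nonnegativity of $\varphi$. For (iii), if $\RC\succeq\QC$ then each $R\in\RC$ is contained in a unique $Q(R)\in\QC$, so $\mu(P|Q) = \sum_{R\subset Q}\mu(R|Q)\mu(P|R)$. The same Jensen argument, after multiplying by $\mu(Q)$ and using $\mu(Q)\mu(R|Q)=\mu(R)$ for $R\subset Q$, gives $H_{\mu}(\PC|\QC) \geq H_{\mu}(\PC|\RC)$.

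Finally, (v) is obtained by combining (ii) and (iii). The chain rule expanded in two ways gives $H_{\mu}(\PC\vee\QC|\RC) = H_{\mu}(\PC|\RC) + H_{\mu}(\QC|\PC\vee\RC)$, so nonnegativity of the second summand yields $H_{\mu}(\PC|\RC) \leq H_{\mu}(\PC\vee\QC|\RC) = H_{\mu}(\QC|\RC) + H_{\mu}(\PC|\QC\vee\RC)$. Then (iii) applied to $\QC\vee\RC \succeq \QC$ gives $H_{\mu}(\PC|\QC\vee\RC) \leq H_{\mu}(\PC|\QC)$, and chaining the two inequalities produces (v). None of the steps is genuinely hard; the only delicate point is the bookkeeping in the proof of (ii)—correctly identifying which summation collapses to which marginal—and the main organizational obstacle is simply to fix the order of proofs so that the Jensen arguments underlying (iii) and (iv) are kept distinct from the purely algebraic argument for (ii).
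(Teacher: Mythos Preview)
Your sketch is correct and follows the standard textbook route (chain rule from the multiplicativity of conditional probabilities, Jensen for the concave $t\mapsto -t\log t$ to get monotonicity, then (v) from (ii) and (iii)). The paper, however, does not prove this proposition at all: it is stated as a collection of well-known facts with a reference to Katok--Hasselblatt \cite{KHa}, so there is no paper proof to compare against. Your argument would serve as a perfectly adequate self-contained substitute for that citation.
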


Now we can prove a list of elementary properties of $h(f_{1,\infty};\PC_{1,\infty})$ most of which are straightforward generalizations of the corresponding properties of classical metric entropy.%

\begin{prop}\label{prop_entropyprops}
Let $\PC_{1,\infty}$ and $\QC_{1,\infty}$ be two sequences of finite measurable partitions for $X_{1,\infty}$. Then the following assertions hold:%
\begin{enumerate}
\item[(i)] $0 \leq h(f_{1,\infty};\PC_{1,\infty}) \leq \limsup_{n\rightarrow\infty}(1/n)\sum_{i=1}^n\log\#\PC_i$.%
\item[(ii)] $h(f_{1,\infty};\PC_{1,\infty}\vee\QC_{1,\infty}) \leq h(f_{1,\infty};\PC_{1,\infty}) + h(f_{1,\infty};\QC_{1,\infty})$.%
\item[(iii)] If $\PC_{1,\infty}\succeq\QC_{1,\infty}$, then $h(f_{1,\infty};\PC_{1,\infty})\geq h(f_{1,\infty};\QC_{1,\infty})$.%
\item[(iv)] For every $k\geq1$ it holds that%
\begin{equation*}
  h\left(f_{1,\infty};\PC_{1,\infty}\right) = \limsup_{n\rightarrow\infty}\frac{1}{nk}H_{\mu_1}\left(\bigvee_{i=0}^{nk-1}f_1^{-i}\PC_{i+1}\right).%
\end{equation*}
\item[(v)] For every $m\geq1$ it holds that $h(f_{1,\infty};\PC_{1,\infty}) = h(f_{1,\infty};\PC^{\langle m \rangle}_{1,\infty}(f_{1,\infty}))$.%
\item[(vi)] $h(f_{1,\infty};\PC_{1,\infty}) \leq h(f_{1,\infty};\QC_{1,\infty}) + \limsup_{n\rightarrow\infty}(1/n)\sum_{i=1}^n H_{\mu_i}\left(\PC_i|\QC_i\right)$.%
\item[(vii)] $h(f_{k,\infty};\PC_{k,\infty}) = h(f_{l,\infty};\PC_{l,\infty})$ for all $k,l\in\N$.%
\end{enumerate}
\end{prop}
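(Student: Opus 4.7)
The plan is to derive each of the seven items by adapting the classical calculations to the nonautonomous setting, the only new ingredient being the push-forward identities $H_{\mu_1}(f_1^{-i}\alpha) = H_{\mu_{i+1}}(\alpha)$ and $H_{\mu_1}(f_1^{-i}\alpha \mid f_1^{-i}\beta) = H_{\mu_{i+1}}(\alpha\mid\beta)$, both valid because $f_1^{i}\mu_1 = \mu_{i+1}$. Once these are in hand, every statement reduces to combinations of Proposition~\ref{prop_conditionalentropy}, Lemma~\ref{lem_timediscgeneral}, and the elementary inequality $\limsup_n(a_n+b_n) \le \limsup_n a_n + \limsup_n b_n$.

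Items (i)--(iii) are immediate. Nonnegativity in (i) is trivial, and subadditivity of $H_{\mu_1}$ together with the push-forward identity yields $H_{\mu_1}(\bigvee_{i=0}^{n-1}f_1^{-i}\PC_{i+1}) \le \sum_{i=1}^n H_{\mu_i}(\PC_i) \le \sum_{i=1}^n \log\#\PC_i$. For (ii), the identity $\bigvee_{i=0}^{n-1}f_1^{-i}(\PC_{i+1}\vee\QC_{i+1}) = \bigvee_{i=0}^{n-1}f_1^{-i}\PC_{i+1} \vee \bigvee_{i=0}^{n-1}f_1^{-i}\QC_{i+1}$ combined with subadditivity finishes the argument. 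For (iii), refinement is preserved under preimages and joins, and finer partitions have larger $H_{\mu_1}$.

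For (iv) and (v) I exploit Lemma~\ref{lem_timediscgeneral}. The sequence $a_n := H_{\mu_1}(\bigvee_{i=0}^{n-1}f_1^{-i}\PC_{i+1})$ is nondecreasing in $n$, since appending $f_1^{-n}\PC_{n+1}$ only refines the join, so (iv) follows at once. For (v), unwinding the definition of $\PC^{\langle m\rangle}_{1,\infty}(f_{1,\infty})$ and using the composition identity $f_{k+1}^i\circ f_1^k = f_1^{k+i}$ yields
\begin{equation*}
  \bigvee_{k=0}^{n-1}f_1^{-k}\Bigl(\bigvee_{i=0}^{m-1}f_{k+1}^{-i}\PC_{k+1+i}\Bigr) \;=\; \bigvee_{j=0}^{n+m-2}f_1^{-j}\PC_{j+1};
\end{equation*}
since $(n+m-1)/n\to 1$, the two $\limsup$s agree.

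For (vi), apply $H_{\mu_1}(\alpha) \le H_{\mu_1}(\beta) + H_{\mu_1}(\alpha\mid\beta)$ with $\alpha = \bigvee_{i=0}^{n-1}f_1^{-i}\PC_{i+1}$ and $\beta$ defined analogously for $\QC_{1,\infty}$; iterating Proposition~\ref{prop_conditionalentropy}(ii) and then using (iii) to drop the extra conditioning collapses the conditional term to $\sum_{i=1}^n H_{\mu_i}(\PC_i\mid\QC_i)$. For (vii), induction reduces matters to $l = k+1$; writing $\bigvee_{i=0}^{n-1}f_k^{-i}\PC_{k+i} = \PC_k \vee f_k^{-1}\bigl(\bigvee_{j=0}^{n-2}f_{k+1}^{-j}\PC_{(k+1)+j}\bigr)$ and sandwiching its $H_{\mu_k}$ between $H_{\mu_{k+1}}(\bigvee_{j=0}^{n-2}f_{k+1}^{-j}\PC_{(k+1)+j})$ and $H_{\mu_k}(\PC_k) + H_{\mu_{k+1}}(\bigvee_{j=0}^{n-2}f_{k+1}^{-j}\PC_{(k+1)+j})$ via subadditivity and the push-forward identity, dividing by $n$, and letting $n\to\infty$ yields equality of the two $\limsup$s. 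The main bookkeeping hurdle lies in (v) and (vii), where the reindexing and the non-additivity of $\limsup$ have to be handled carefully, but no step involves anything beyond elementary index manipulation.
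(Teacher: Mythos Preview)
Your proposal is correct and follows essentially the same approach as the paper's proof: items (i)--(iii) are dismissed as immediate, (iv) and (v) are handled via Lemma~\ref{lem_timediscgeneral} and the explicit reindexing identity for the iterated join, (vi) is obtained by splitting off the conditional term and then peeling it apart with Proposition~\ref{prop_conditionalentropy}(ii),(iii) together with the push-forward identity, and (vii) is the sandwich argument between $H_{\mu_{k+1}}(\cdots)$ and $H_{\mu_k}(\PC_k)+H_{\mu_{k+1}}(\cdots)$. The only cosmetic difference is that the paper writes out the inductive peeling in (vi) step by step, whereas you summarize it in one sentence; also note that in (vi) the conditional term is \emph{bounded above} by $\sum_{i=1}^n H_{\mu_i}(\PC_i\mid\QC_i)$ rather than literally ``collapsing'' to it, but your argument makes clear this is what you mean.
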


\begin{proof}
The properties (i)--(iii) follow very easily from the properties of the entropy of a partition. Property (iv) is a consequence of Lemma \ref{lem_timediscgeneral}, since the partitions $\bigvee_{i=0}^{n-1}f_1^{-i}\PC_{i+1}$ become finer with increasing $n$, and hence the sequence $n\mapsto H_{\mu_1}(\bigvee_{i=0}^{n-1}f_1^{-i}\PC_{i+1})$ is monotonically increasing. To show (v), note that for every $n\geq1$ we have the identities%
\begin{eqnarray*}
&&  H_{\mu_1}\left(\bigvee_{i=0}^{n-1}f_1^{-i}\PC^{\langle m \rangle}_{i+1}(f_{1,\infty})\right) = H_{\mu_1}\left(\bigvee_{i=0}^{n-1}f_1^{-i}\bigvee_{j=0}^{m-1}f_{i+1}^{-j}\PC_{j+i+1}\right)\allowdisplaybreaks\\
  &=& H_{\mu_1}\left(\bigvee_{i=0}^{n-1}\bigvee_{j=0}^{m-1} f_1^{-(i+j)}\PC_{i+j+1}\right) = H_{\mu_1}\left(\bigvee_{k=0}^{n+m-2}f_1^{-k}\PC_{k+1}\right).%
\end{eqnarray*}
This implies%
\begin{eqnarray*}
  h\left(f_{1,\infty};\PC^{\langle m \rangle}_{1,\infty}(f_{1,\infty})\right) &=& \limsup_{n\rightarrow\infty}\frac{1}{n}H_{\mu_1}\left(\bigvee_{k=0}^{n+m-2}f_1^{-k}\PC_{k+1}\right)\\
  &=& \limsup_{n\rightarrow\infty}\frac{1}{n}H_{\mu_1}\left(\bigvee_{k=0}^{n-1}f_1^{-k}\PC_{k+1}\right) = h\left(f_{1,\infty};\PC_{1,\infty}\right),%
\end{eqnarray*}
which concludes the proof of (v). Next, let us prove (vi): From Proposition \ref{prop_conditionalentropy} (ii) it follows that%
\begin{eqnarray*}
  H_{\mu_1}\!\mss\mss&&\mss\mss\!\left(\bigvee_{i=0}^{n-1}f_1^{-i}\PC_{i+1}\right) \leq H_{\mu_1}\left(\bigvee_{i=0}^{n-1}f_1^{-i}\PC_{i+1} \vee \bigvee_{i=0}^{n-1}f_1^{-i}\QC_{i+1}\right)\\
                       &=& H_{\mu_1}\left(\bigvee_{i=0}^{n-1}f_1^{-i}\QC_{i+1}\right) + H_{\mu_1}\left(\bigvee_{i=0}^{n-1}f_1^{-i}\PC_{i+1}|\bigvee_{i=0}^{n-1}f_1^{-i}\QC_{i+1}\right).%
\end{eqnarray*}
For the last term in this expression we further obtain%
\begin{eqnarray*}
  H_{\mu_1}\!\mss\mss&&\mss\mss\!\left(\bigvee_{i=0}^{n-1}f_1^{-i}\PC_{i+1}|\bigvee_{i=0}^{n-1}f_1^{-i}\QC_{i+1}\right)\allowdisplaybreaks\\
   &=& H_{\mu_1}\left(\PC_1 \vee f_1^{-1}\bigvee_{i=0}^{n-2}f_2^{-i}\PC_{i+2}|\bigvee_{i=0}^{n-1}f_1^{-i}\QC_{i+1}\right)\allowdisplaybreaks\\
   &=& H_{\mu_1}\left(\PC_1|\bigvee_{i=0}^{n-1}f_1^{-i}\QC_{i+1}\right)
    + H_{\mu_1}\left(f_1^{-1}\bigvee_{i=0}^{n-2}f_2^{-i}\PC_{i+2}|\PC_1 \vee \bigvee_{i=0}^{n-1}f_1^{-i}\QC_{i+1}\right).%
\end{eqnarray*}
Now we use Proposition \ref{prop_conditionalentropy} (iii) to see that this sum can be estimated by%
\begin{equation*}
  \leq H_{\mu_1}\left(\PC_1|\QC_1\right) + H_{\mu_1}\left(f_1^{-1}\bigvee_{i=0}^{n-2}f_2^{-i}\PC_{i+2}|\bigvee_{i=0}^{n-1}f_1^{-i}\QC_{i+1}\right).%
\end{equation*}
Using the same arguments again, for this expression we find%
\begin{eqnarray*}
  &=& H_{\mu_1}\left(\PC_1|\QC_1\right) + H_{\mu_1}\left(f_1^{-1}\PC_2 \vee \bigvee_{i=0}^{n-3}f_1^{-(i+2)}\PC_{i+3}|\bigvee_{i=0}^{n-1}f_1^{-i}\QC_{i+1}\right)\allowdisplaybreaks\\
  &=& H_{\mu_1}\left(\PC_1|\QC_1\right) + H_{\mu_1}\left(f_1^{-1}\PC_2|\bigvee_{i=0}^{n-1}f_1^{-i}\QC_{i+1}\right)\allowdisplaybreaks\\
  && + H_{\mu_1}\left(\bigvee_{i=0}^{n-3}f_1^{-(i+2)}\PC_{i+3}| f_1^{-1}\PC_2 \vee \bigvee_{i=0}^{n-1}f_1^{-i}\QC_{i+1}\right)\allowdisplaybreaks\\
  &\leq& H_{\mu_1}\left(\PC_1|\QC_1\right) + H_{\mu_1}\left(f_1^{-1}\PC_2| f_1^{-1}\QC_2\right)\\
  && + H_{\mu_1}\left(f_1^{-2}\bigvee_{i=0}^{n-3}f_3^{-i}\PC_{i+3}|\bigvee_{i=0}^{n-1}f_1^{-i}\QC_{i+1}\right).%
\end{eqnarray*}
Using $f_1\mu_1 = \mu_2$, we find that $H_{\mu_1}\left(f_1^{-1}\PC_2| f_1^{-1}\QC_2\right) = H_{\mu_2}(\PC_2|\QC_2)$. Going on inductively, we end up with the estimate%
\begin{equation*}
  H_{\mu_1}\left(\bigvee_{i=0}^{n-1}f_1^{-i}\PC_{i+1}|\bigvee_{i=0}^{n-1}f_1^{-i}\QC_{i+1}\right) \leq \sum_{i=1}^n H_{\mu_i}\left(\PC_i|\QC_i\right).%
\end{equation*}
Hence, we obtain%
\begin{equation*}
  h\left(f_{1,\infty};\PC_{1,\infty}\right) \leq h\left(f_{1,\infty};\QC_{1,\infty}\right) + \limsup_{n\rightarrow\infty}\frac{1}{n}\sum_{i=1}^n H_{\mu_i}\left(\PC_i|\QC_i\right),%
\end{equation*}
which finishes the proof of (vi). Finally, we prove (vii): For any $k\in\N$ we find%
\begin{eqnarray*}
  h(f_{k,\infty};\PC_{k,\infty}) &=& \limsup_{n\rightarrow\infty}\frac{1}{n}H_{\mu_k}\left(\PC_k \vee \bigvee_{i=1}^{n-1}f_k^{-i}\PC_{k+i}\right)\allowdisplaybreaks\\
  &\leq& \limsup_{n\rightarrow\infty}\frac{1}{n}\left[H_{\mu_k}\left(\PC_k\right) + H_{\mu_k}\left(\bigvee_{i=1}^{n-1}f_k^{-i}\PC_{k+i}\right)\right]\allowdisplaybreaks\\
  &=& \limsup_{n\rightarrow\infty}\frac{1}{n}H_{\mu_k}\left(f_k^{-1}\bigvee_{i=1}^{n-1}f_{k+1}^{-(i-1)}\PC_{k+i}\right)\allowdisplaybreaks\\
  &=& \limsup_{n\rightarrow\infty}\frac{1}{n}H_{\mu_{k+1}}\left(\bigvee_{i=0}^{n-2}f_{k+1}^{-i}\PC_{(k+1)+i}\right) = h\left(f_{k+1,\infty};\PC_{k+1,\infty}\right).%
\end{eqnarray*}
Using the elementary property of the entropy of partitions that $H(\AC) \geq H(\BC)$ whenever $\AC$ is finer than $\BC$, the converse inequality is proved by%
\begin{eqnarray*}
  h(f_{k,\infty};\PC_{k,\infty}) &=& \limsup_{n\rightarrow\infty}\frac{1}{n}H_{\mu_k}\left(\PC_k \vee \bigvee_{i=1}^{n-1}f_k^{-i}\PC_{k+i}\right)\allowdisplaybreaks\\
                              &\geq& \limsup_{n\rightarrow\infty}\frac{1}{n}H_{\mu_k}\left(\bigvee_{i=1}^{n-1}f_k^{-i}\PC_{k+i}\right)\allowdisplaybreaks\\
                                 &=& \limsup_{n\rightarrow\infty}\frac{1}{n}H_{\mu_k}\left(f_k^{-1}\bigvee_{i=1}^{n-1}f_{k+1}^{-(i-1)}\PC_{k+i}\right)\allowdisplaybreaks\\
                                 &=& \limsup_{n\rightarrow\infty}\frac{1}{n}H_{\mu_{k+1}}\left(\bigvee_{i=0}^{n-2}f_{k+1}^{-i}\PC_{(k+1)+i}\right) = h(f_{k+1,\infty};\PC_{k+1,\infty}).%
\end{eqnarray*}
This implies (vii) and finishes the proof of the proposition.%
\end{proof}

\begin{remark}
Note that the equality in item (vii) of the preceding proposition reveals an essential difference between metric and topological entropy of NDSs, since in the topological setting only the inequality%
\begin{equation*}
  h_{\tp}(f_{k,\infty}) \leq h_{\tp}(f_{k+1,\infty})%
\end{equation*}
holds. A counterexample for the equality is given by a sequence $f_{1,\infty}$ on the unit interval such that $f_1$ is constant and all other $f_n$ are equal to the standard tent map. In this case, clearly $h_{\tp}(f_{1,\infty})=0$, but $h_{\tp}(f_{k,\infty}) = \log 2$ for all $k\geq2$ (see also \cite{KSn} for a counterexample with $h_{\tp}(f_{k,\infty}) < h_{\tp}(f_{k+1,\infty})$ for all $k$). Therefore, the notion of asymptotical topological entropy, as defined in \cite{KSn}, has no meaningful analogue for metric systems.%
\end{remark}

From item (vii) of the preceding proposition we can conclude a similar result as \cite[Thm.~A]{KSn} which asserts that the topological entropy of autonomous systems is commutative in the sense that $h_{\tp}(f\circ g) = h_{\tp}(g\circ f)$.%

\begin{cor}
Consider two probability spaces $(X,\mu)$ and $(Y,\nu)$ and measurable maps $f:X\rightarrow Y$, $g:Y\rightarrow X$ such that $f\mu = \nu$ and $g\nu = \mu$. Then $\mu$ is an invariant measure for $g \circ f$, $\nu$ is an invariant measure for $f\circ g$, and it holds that%
\begin{equation*}
  h_{\nu}(f\circ g) = h_{\mu}(g\circ f).%
\end{equation*}
\end{cor}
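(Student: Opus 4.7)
The plan is to realize both autonomous systems $g\circ f$ and $f\circ g$ as the time-$2$ dynamics of a single alternating NDS and to derive the equality from Proposition \ref{prop_entropyprops}(vii). Invariance is immediate from functoriality of push-forwards: $(g\circ f)\mu = g\nu = \mu$ and $(f\circ g)\nu = f\mu = \nu$. For the main statement, I would define a NDS $(Z_{1,\infty}, F_{1,\infty}, \lambda_{1,\infty})$ with odd-index data $(X, f, \mu)$ and even-index data $(Y, g, \nu)$; the assumptions $f\mu = \nu$ and $g\nu = \mu$ are precisely the compatibility $F_n \lambda_n = \lambda_{n+1}$. One then has $F_1^{2k} = (g\circ f)^k$ and $F_1^{2k+1} = f\circ(g\circ f)^k$, with symmetric formulas for the shifted system starting at index $2$.

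Fix finite measurable partitions $\PC$ of $X$ and $\QC$ of $Y$ and let $\mathcal{R}_{1,\infty}$ be the alternating sequence $\PC,\QC,\PC,\QC,\ldots$. Pairing consecutive odd/even indices in the join and using the formulas above collapses $\bigvee_{i=0}^{2n-1} F_1^{-i}\mathcal{R}_{i+1}$ to
\begin{equation*}
  \bigvee_{k=0}^{n-1}(g\circ f)^{-k}\bigl(\PC \vee f^{-1}\QC\bigr).
\end{equation*}
Plugging this into Proposition \ref{prop_entropyprops}(iv) with $k=2$ yields $h(F_{1,\infty}; \mathcal{R}_{1,\infty}) = \tfrac{1}{2} h_\mu(g\circ f;\, \PC \vee f^{-1}\QC)$, and the analogous calculation starting at index $2$ gives $h(F_{2,\infty}; \mathcal{R}_{2,\infty}) = \tfrac{1}{2} h_\nu(f\circ g;\, \QC \vee g^{-1}\PC)$. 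Now Proposition \ref{prop_entropyprops}(vii) equates the two left-hand sides, so
\begin{equation*}
  h_\mu\bigl(g\circ f;\, \PC \vee f^{-1}\QC\bigr) = h_\nu\bigl(f\circ g;\, \QC \vee g^{-1}\PC\bigr).
\end{equation*}

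To finish, I would take the supremum over all pairs $\PC,\QC$. Specializing $\QC$ to the trivial partition $\{Y\}$ reduces $\PC \vee f^{-1}\QC$ to $\PC$, so the left-hand supremum equals $\sup_\PC h_\mu(g\circ f;\PC) = h_\mu(g\circ f)$; by symmetry the right-hand supremum is $h_\nu(f\circ g)$, giving the claimed equality. There is no genuine obstacle here: the whole content is packaged in Proposition \ref{prop_entropyprops}(vii), and the only real work is the index bookkeeping that produces the displayed join formula. The conceptual point is that the classical autonomous identity $h_\mu(g\circ f) = h_\nu(f\circ g)$, usually established by an ad hoc comparison of iterates, becomes a transparent instance of shift invariance of the partition entropy along the NDS time axis.
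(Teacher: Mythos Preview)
Your argument is correct and follows essentially the same route as the paper: the alternating NDS, Proposition~\ref{prop_entropyprops}(iv) with $k=2$, and Proposition~\ref{prop_entropyprops}(vii) are exactly what the paper uses. The only cosmetic difference is that the paper fixes $\PC = f^{-1}\QC$ from the start (so the join collapses to $\bigvee_k (g\circ f)^{-k}\PC$ rather than $\bigvee_k (g\circ f)^{-k}(\PC\vee f^{-1}\QC)$), obtains $h_\mu(g\circ f;f^{-1}\QC) = h_\nu(f\circ g;\QC)$, deduces one inequality, and then swaps roles; your version keeps both partitions free and takes a single joint supremum, which is arguably cleaner.
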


\begin{proof}
We consider the NDS $(X_{1,\infty},f_{1,\infty})$ defined by $X_{1,\infty} := \{X,Y,X,Y,\ldots\}$ and $f_{1,\infty} := \{f,g,f,g,\ldots\}$. The corresponding $f_{1,\infty}$-invariant sequence of measures is $\mu_{1,\infty} := \{\mu,\nu,\mu,\nu,\ldots\}$. Consider a finite partition $\QC$ of $Y$ and put $\PC := f^{-1}\QC$. Then, by Proposition \ref{prop_entropyprops} (vii), for $\PC_{1,\infty} := \{\PC,\QC,\PC,\QC,\ldots\}$ we have%
\begin{equation}\label{eq_h12}
  h(f_{1,\infty};\PC_{1,\infty}) \leq h(f_{2,\infty};\PC_{2,\infty}) \leq h(f_{3,\infty};\PC_{3,\infty}) = h(f_{1,\infty};\PC_{1,\infty}).%
\end{equation}
Using Proposition \ref{prop_entropyprops} (iv), we find%
\begin{eqnarray*}
  h\left(f_{1,\infty};\PC_{1,\infty}\right) &=& \limsup_{n\rightarrow\infty}\frac{1}{2n}H_{\mu}\left(\bigvee_{i=0}^{2n-1}f_1^{-i}\PC_{i+1}\right)\\
  &=& \limsup_{n\rightarrow\infty}\frac{1}{2n}H_{\mu}\left(\bigvee_{i=0}^{n-1}f_1^{-2i}\PC_{2i+1} \vee \bigvee_{i=0}^{n-1}f_1^{-(2i+1)}\PC_{2i+2}\right)\\
  &=& \limsup_{n\rightarrow\infty}\frac{1}{2n}H_{\mu}\left(\bigvee_{i=0}^{n-1}(g\circ f)^{-i}\PC \vee \bigvee_{i=0}^{n-1}(g\circ f)^{-i}f^{-1}\QC\right)\\
  &=& \frac{1}{2}\limsup_{n\rightarrow\infty}\frac{1}{n}H_{\mu}\left(\bigvee_{i=0}^{n-1}(g\circ f)^{-i}\PC\right) = \frac{1}{2}h_{\mu}(g\circ f;\PC).%
\end{eqnarray*}
Similarly, we obtain $2h(f_{2,\infty};\PC_{2,\infty}) = h_{\nu}(f\circ g;\QC)$. Hence, from \eqref{eq_h12} we conclude%
\begin{equation*}
  h_{\mu}(g\circ f;\PC) = h_{\nu}(f\circ g;\QC).%
\end{equation*}
Since we can choose $\QC$ freely, this implies $h_{\nu}(f\circ g) \leq h_{\mu}(g\circ f)$. Starting with a partition $\PC$ of $X$ and putting $\QC := g^{-1}\PC$, we get the converse inequality.%
\end{proof}

\begin{remark}
In Balibrea, Jim\'enez L\'opez, and C\'anovas \cite{BJC} one finds proofs for the commutativity of metric and topological entropy which are not based on entropy notions for nonautonomous systems. These commutativity properties were first found in Dana and Montrucchio \cite{DMo}. Later, Kolyada and Snoha \cite{KSn} rediscovered the commutativity of topological entropy.%
\end{remark}

We finish this subsection with an example which shows that the entropy $h(f_{1,\infty};\PC_{1,\infty})$ can be arbitrarily large even for a very trivial system.%

\begin{example}\label{ex_id}
Let $X_{1,\infty}$, $f_{1,\infty}$, and $\mu_{1,\infty}$ be constant sequences given by $X_n = [0,1]$, $f_n = \id_{[0,1]}$ and $\mu_n = \lambda$ (the standard Lebesgue measure). Consider the family $\PC_{1,\infty}$ of partitions given by%
\begin{equation*}
  \PC_n = \left\{[0,1/k^n),[1/k^n,2/k^n),\ldots,[(k^n-1)/k^n,1]\right\}%
\end{equation*}
for a fixed integer $k\geq2$. Then one easily sees that%
\begin{equation*}
  H_{\mu_1}\left(\bigvee_{i=0}^{n-1}f_1^{-i}\PC_{i+1}\right) = H_{\lambda}(\PC_n) = -\sum_{i=1}^{k^n}\frac{1}{k^n}\log\frac{1}{k^n} = \log k^n = n\log k,%
\end{equation*}
which implies $h\left(f_{1,\infty};\PC_{1,\infty}\right) = \log k$.%
\end{example}

From this example one sees that by taking appropriate sequences of partitions, one obtains arbitrarily large values for the entropy of the identity. Here we have the same problem as we had in defining the topological entropy via sequences of open covers. If the resolution becomes finer at exponential speed, one obtains a gain in information which is not due to the dynamics of the system. Hence, in the definition of the metric entropy of $f_{1,\infty}$, we have to exclude such sequences.%

\subsection{Admissible Classes and Metric Entropy of Nonautonomous Systems}%

To define the entropy of the system $(X_{1,\infty},f_{1,\infty},\mu_{1,\infty})$, we have to choose a sufficiently nice subclass $\EC$ from the class of all sequences $\PC_{1,\infty}$. Then the entropy can be defined in the usual way by taking the supremum over all $\PC_{1,\infty}\in\EC$. In view of the definition of topological entropy in terms of sequences of open covers and Example \ref{ex_id} it is clear that taking all sequences of partitions is too much. Since there is no direct analogue to Lebesgue numbers for measurable partitions, we introduce suitable classes of sequences of partitions by axioms which reflect some properties of the family $\LC(f_{1,\infty})$ defined in Section \ref{sec_prelims}.%

\begin{defi}
We call a nonempty class $\EC$ of sequences of finite measurable partitions for $X_{1,\infty}$ \emph{admissible (for $f_{1,\infty}$)} if it satisfies the following axioms:%
\begin{enumerate}
\item[(A)] For every sequence $\PC_{1,\infty}\in\EC$ there is a bound $N\geq1$ on $\#\PC_n$, i.e., $\#\PC_n \leq N$ for all $n\geq1$.%
\item[(B)] If $\PC_{1,\infty}\in\EC$ and $\QC_{1,\infty}$ is a sequence of partitions for $X_{1,\infty}$ with $\PC_{1,\infty} \succeq \QC_{1,\infty}$, then $\QC_{1,\infty}\in\EC$.%
\item[(C)] $\EC$ is closed with respect to successive refinements via the action of $f_{1,\infty}$. That is, if $\PC_{1,\infty}\in\EC$, then for every $m\geq1$ also $\PC^{\langle m \rangle}_{1,\infty}(f_{1,\infty})\in\EC$.%
\end{enumerate}
\end{defi}

From Axiom (A) it follows that the upper bound in Proposition \ref{prop_entropyprops} (i) is always finite. Moreover, by adding sets of measure zero, we can assume that $\#\PC_n$ is constant for every element of $\EC$. Axiom (B) says that with every sequence $\PC_{1,\infty}\in\EC$, also the sequences which are coarser than $\PC_{1,\infty}$ are contained in $\EC$. Axiom (C) will be essential for proving the power rule for metric entropy. It reflects the property of sequences of open covers stated in Lemma \ref{lem_topologicalrefinements}.%

\begin{defi}
If $\EC$ is an admissible class, we define the \emph{metric entropy} of $f_{1,\infty}$ with respect to $\EC$ by%
\begin{equation}\label{eq_defmetricentropy}
  h_{\EC}(f_{1,\infty}) = h_{\EC}(f_{1,\infty};\mu_{1,\infty}) := \sup_{\PC_{1,\infty}\in\EC}h(f_{1,\infty};\PC_{1,\infty}).%
\end{equation}
\end{defi}

\begin{prop}
Given a metric NDS $(X_{1,\infty},f_{1,\infty})$, let $\EC$ be the class of all sequences of partitions for $X_{1,\infty}$ which satisfy Axiom (A). Then $\EC$ is an admissible class. $\EC$ is maximal, i.e., it cannot be extended to a larger admissible class. Therefore, we denote this class by $\EC_{\max}$ or $\EC_{\max}(X_{1,\infty})$.%
\end{prop}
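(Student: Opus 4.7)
The plan is to verify Axioms (A), (B), (C) for the class $\EC$ of all sequences of partitions satisfying Axiom (A), and then deduce maximality from the tautological observation that any admissible class must satisfy (A) by definition.

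First I would verify the three axioms. Axiom (A) is built into the definition of $\EC$, so there is nothing to check. For Axiom (B), suppose $\PC_{1,\infty}\in\EC$ with uniform bound $N$, i.e., $\#\PC_n\leq N$ for all $n\geq1$, and suppose $\QC_{1,\infty}\preceq\PC_{1,\infty}$. Since each $\QC_n$ is coarser than $\PC_n$, we have $\#\QC_n\leq\#\PC_n\leq N$, so $\QC_{1,\infty}$ also satisfies (A) and hence lies in $\EC$. For Axiom (C), fix $m\geq1$ and $\PC_{1,\infty}\in\EC$ with uniform bound $N$. The $k$-th member of the refined sequence $\PC^{\langle m\rangle}_{1,\infty}(f_{1,\infty})$ is the join $\bigvee_{i=0}^{m-1}f_k^{-i}\PC_{k+i}$, whose cardinality is bounded above by $\prod_{i=0}^{m-1}\#\PC_{k+i}\leq N^m$. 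Since this bound is independent of $k$, the refined sequence still satisfies (A) and therefore belongs to $\EC$.

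For maximality, I would argue as follows: let $\EC'$ be any admissible class of sequences of partitions for $X_{1,\infty}$. By Axiom (A) applied to $\EC'$, every sequence $\PC_{1,\infty}\in\EC'$ admits a uniform bound on $\#\PC_n$, which is precisely the defining property of membership in $\EC$. Hence $\EC'\subseteq\EC$, and no admissible class can properly extend $\EC$.

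I do not expect any genuine obstacle in this proof; the whole statement is essentially a bookkeeping check. The only mildly nontrivial point is the computation in Axiom (C), where one must observe that the $m$-fold join produces at most $N^m$ elements with $m$ held fixed, so that uniformity in $k$ is preserved.
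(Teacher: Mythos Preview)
Your proposal is correct and follows essentially the same approach as the paper's proof: verify Axioms (B) and (C) by the obvious cardinality bounds ($\#\QC_n\leq\#\PC_n$ for coarsenings, and $\#\bigvee_{i=0}^{m-1}f_k^{-i}\PC_{k+i}\leq N^m$ for the refined sequence), and deduce maximality from the fact that every admissible class must satisfy Axiom (A). The paper's argument is identical in substance, only slightly more terse.
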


\begin{proof}
It is obvious that $\EC$ cannot be enlarged without violating Axiom (A). Hence, it suffices to prove that $\EC$ satisfies Axioms (B) and (C). If $\PC_{1,\infty}\in\EC$ and $\QC_{1,\infty}$ is a sequence of partitions which is coarser than $\PC_{1,\infty}$, it follows that $\#\QC_n\leq\#\PC_n$ for all $n\geq1$, which implies $\QC_{1,\infty}\in\EC$. Now consider for some $\PC_{1,\infty}\in\EC$ and $m\geq1$ the sequence $\PC^{\langle m\rangle}_{1,\infty}(f_{1,\infty})$. We have%
\begin{equation*}
  \#\left[\bigvee_{i=0}^{m-1}f_n^{-i}\PC_{i+n}\right] \leq \prod_{i=0}^{m-1}\#\left[f_n^{-i}\PC_{i+n}\right] = \prod_{i=0}^{m-1}\#\PC_{i+n} \leq \left(\sup_{i\geq1}\#\PC_i\right)^m.%
\end{equation*}
This implies that $\EC$ satisfies Axiom (C).%
\end{proof}

The following example shows that $\EC_{\max}$ is in general not a useful admissible class.\footnote{This example has been presented to the author by Tomasz Downarowicz.}%

\begin{example}
We show that $h_{\EC_{\max}}(f_{1,\infty}) = \infty$ whenever the maps $f_i$ are bi-measurable and the spaces $(X_n,\mu_n)$ are non-atomic. Indeed, for every $k\geq1$ we find a sequence $\PC_{1,\infty}$ of partitions with $\#\PC_n \equiv k$ such that $h(f_{1,\infty};\PC_{1,\infty})=\log k$, which is constructed as follows. On $X_1$ take a partition $\PC_1$ consisting of $k$ sets with equal measure $1/k$. Then $\QC_2 := f_1 \PC_1$ is a partition of $X_2$ into $k$ sets of equal measure. Partition each element $Q_i$ of $\QC_2$ into $k$ sets $Q_{i1},\ldots,Q_{ik}$ of equal measure $1/k^2$. Then define a new partition $\PC_2$ of $X_2$ consisting of the sets $P^2_1:=Q_{11} \cup Q_{21} \cup \ldots \cup Q_{k1}$, $P^2_2 := Q_{12}\cup\ldots\cup Q_{k2}$, $\ldots$, $P^2_k := Q_{1k}\cup\ldots \cup Q_{kk}$. Also $\PC_2$ is a partition of $X_2$ into $k$ sets of equal measure $1/k$, and $\PC_2$, $\QC_2$ are independent. This implies%
\begin{eqnarray*}
  H_{\mu_1}(\PC_1 \vee f_1^{-1}\PC_2) &=& H_{\mu_1}(f_1^{-1}\QC_2 \vee f_1^{-1}\PC_2)\\
  &=& H_{\mu_2}(\QC_2 \vee \PC_2) = H_{\mu_2}(\QC_2) + H_{\mu_2}(\PC_2) = 2\log k.%
\end{eqnarray*}
Inductively, one can proceed this construction. For $i$ from $1$ to some fixed $n$, assume that $\PC_i$ is a partition of $X_i$ into $k$ sets of equal measure such that $\RC_n:=\PC_1 \vee f_1^{-1}\PC_2 \vee \ldots \vee f_1^{-(n-1)}\PC_n$ consists of $k^n$ sets of equal measure. Then consider the partition $\QC_{n+1} := f_1^n\RC_n$ of $X_{n+1}$. Let $\RC_n = \{R_1,\ldots,R_{k^n}\}$ and partition each $R_i$ into $k$ sets of equal measure $1/k^{n+1}$, say $R_i = R_{i1}\cup\ldots\cup R_{ik}$. Define the partition $\PC_{n+1} = \{P^{n+1}_1,\ldots,P^{n+1}_k\}$ by $P_j^{n+1} := R_{1j}\cup\ldots\cup R_{k^nj}$. This gives%
\begin{eqnarray*}
  H_{\mu_1}\left(\bigvee_{i=0}^n f_1^{-i}\PC_{i+1}\right) &=& H_{\mu_1}\left(\RC_n\vee f_1^{-n}\PC_{n+1}\right) = H_{\mu_1}\left(f_1^{-n}\QC_{n+1} \vee f_1^{-n}\PC_{n+1}\right)\\
  &=& H_{\mu_{n+1}}(\QC_{n+1}\vee \PC_{n+1}) = H_{\mu_{n+1}}(\QC_{n+1}) + H_{\mu_{n+1}}(\PC_{n+1})\\
  &=& \log k^n + \log k = (n+1) \log k,%
\end{eqnarray*}
which implies $h(f_{1,\infty};\PC_{1,\infty}) = \log k$ for the sequence $\PC_{1,\infty}=\{\PC_n\}$ obtained by this construction.%
\end{example}

As this example shows, we have to consider smaller admissible classes. These are provided by the following proposition whose simple proof will be omitted.

\begin{prop}\label{prop_admissibleclassconstructions}
Arbitrary unions and nonempty intersections of admissible classes are again admissible classes. In particular, for every nonempty subset $\FC\subset\EC_{\max}$ there exists a smallest admissible class $\EC(\FC)$ which satisfies $\FC \subset \EC(\FC) \subset \EC_{\max}$ (defined as the intersection of all admissible classes containing $\FC$). We also call $\EC(\FC)$ the admissible class generated by $\FC$.%
\end{prop}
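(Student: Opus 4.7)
The plan is to verify Axioms (A), (B), (C) directly for unions and nonempty intersections, observing that each axiom is formulated ``pointwise'' on sequences. Let me denote an arbitrary family of admissible classes by $\{\EC_\alpha\}_{\alpha\in I}$.

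For the union $\EC := \bigcup_{\alpha\in I}\EC_\alpha$: given any $\PC_{1,\infty}\in\EC$, I pick an index $\alpha_0$ with $\PC_{1,\infty}\in\EC_{\alpha_0}$. Axiom (A) then holds for $\PC_{1,\infty}$ because it holds in $\EC_{\alpha_0}$ (the bound $N$ is attached to the sequence, not to the class). For (B), any $\QC_{1,\infty}\preceq\PC_{1,\infty}$ lies in $\EC_{\alpha_0}$ by admissibility of $\EC_{\alpha_0}$, hence in $\EC$. For (C), the same argument: $\PC_{1,\infty}^{\langle m\rangle}(f_{1,\infty})\in\EC_{\alpha_0}\subseteq\EC$. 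Note that $\EC$ is nonempty automatically (assuming at least one $\EC_\alpha$ is, which is part of the definition of admissible).

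For a nonempty intersection $\EC := \bigcap_{\alpha\in I}\EC_\alpha$: take any $\PC_{1,\infty}\in\EC$, so $\PC_{1,\infty}\in\EC_\alpha$ for every $\alpha$. Axiom (A) is inherited (pick the bound furnished by any single $\EC_\alpha$). For (B), if $\QC_{1,\infty}\preceq\PC_{1,\infty}$, then $\QC_{1,\infty}\in\EC_\alpha$ for every $\alpha$, hence $\QC_{1,\infty}\in\EC$; similarly for (C), the sequence $\PC_{1,\infty}^{\langle m\rangle}(f_{1,\infty})$ sits in every $\EC_\alpha$, hence in $\EC$. Nonemptiness is assumed by hypothesis.

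For the final claim, given a nonempty $\FC\subseteq\EC_{\max}$, the collection of all admissible classes containing $\FC$ is itself nonempty because $\EC_{\max}$ is admissible (by the preceding proposition) and contains $\FC$. The intersection of this collection is therefore a nonempty intersection of admissible classes, and thus, by what was just proved, an admissible class; it contains $\FC$ by construction and is contained in every admissible class containing $\FC$, which is precisely the characterization of the smallest such class. I do not expect any genuine obstacle here: the proof is pure axiom-chasing, and the only mildly subtle point is that Axiom (A) must be read as a quantifier over sequences (``for every $\PC_{1,\infty}\in\EC$ there exists $N$\ldots'') rather than as a uniform bound, which is exactly why unions are safe.
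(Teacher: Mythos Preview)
Your proof is correct and is the natural axiom-checking argument one would expect; the paper itself omits the proof entirely, saying only that it is ``simple'' and will be left out. One small remark: you treat the hypothesis ``nonempty intersections'' as assuming the intersection is nonempty, which is a fair reading, but in fact any intersection of admissible classes over a nonempty index set is automatically nonempty, since Axiom~(B) forces every admissible class to contain the trivial sequence $\{\{X_n\}\}_{n\geq1}$ (it is coarser than any element of the class). This observation is not needed for the application to $\EC(\FC)$, where you correctly invoke $\EC_{\max}$ to guarantee nonemptiness, but it does show that the restriction to ``nonempty'' intersections is essentially vacuous once the index set is nonempty.
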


We also have to show that the metric entropy of a NDS indeed generalizes the usual notion of metric entropy for autonomous systems. To this end, we use the following result.%

\begin{prop}\label{prop_generatedclasses}
Let $\FC$ be a nonempty subset of $\EC_{\max}$. Then%
\begin{equation}\label{eq_generatedclasschar}
  \HC(\FC) := \left\{\QC_{1,\infty}\in\EC_{\max}\ |\ \exists \PC_{1,\infty}\in\FC:\ h(f_{1,\infty};\QC_{1,\infty}) \leq h(f_{1,\infty};\PC_{1,\infty})\right\}%
\end{equation}
is an admissible class with $\FC \subset \HC(\FC) \subset \EC_{\max}$. Consequently, $\EC(\FC) \subset \HC(\FC)$ and it holds that%
\begin{equation*}
  h_{\EC(\FC)}(f_{1,\infty}) = h_{\HC(\FC)}(f_{1,\infty}) = \sup_{\PC_{1,\infty}\in\FC}h\left(f_{1,\infty};\PC_{1,\infty}\right).%
\end{equation*}
\end{prop}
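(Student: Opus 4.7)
The plan is to verify directly that $\HC(\FC)$ satisfies the three axioms of an admissible class, using only the elementary properties of the partition entropy $h(f_{1,\infty};\cdot)$ collected in Proposition \ref{prop_entropyprops}, together with the fact (established in the previous proposition) that $\EC_{\max}$ itself is closed under coarsening and under the operation $\PC_{1,\infty}\mapsto\PC^{\langle m\rangle}_{1,\infty}(f_{1,\infty})$.

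First I would check the two nontrivial axioms. Axiom (A) is immediate because, by construction, $\HC(\FC)\subset\EC_{\max}$. For Axiom (B), suppose $\QC_{1,\infty}\in\HC(\FC)$ and $\QC_{1,\infty}\succeq\RC_{1,\infty}$. Since $\EC_{\max}$ satisfies (B), we have $\RC_{1,\infty}\in\EC_{\max}$, and by Proposition \ref{prop_entropyprops} (iii) we obtain $h(f_{1,\infty};\RC_{1,\infty}) \leq h(f_{1,\infty};\QC_{1,\infty}) \leq h(f_{1,\infty};\PC_{1,\infty})$ for the witness $\PC_{1,\infty}\in\FC$ coming from $\QC_{1,\infty}\in\HC(\FC)$; hence $\RC_{1,\infty}\in\HC(\FC)$. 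For Axiom (C), if $\QC_{1,\infty}\in\HC(\FC)$ and $m\geq 1$, then $\QC^{\langle m\rangle}_{1,\infty}(f_{1,\infty})\in\EC_{\max}$ because $\EC_{\max}$ satisfies (C), and by Proposition \ref{prop_entropyprops} (v), $h\bigl(f_{1,\infty};\QC^{\langle m\rangle}_{1,\infty}(f_{1,\infty})\bigr) = h(f_{1,\infty};\QC_{1,\infty}) \leq h(f_{1,\infty};\PC_{1,\infty})$, so it lies in $\HC(\FC)$. The inclusion $\FC\subset\HC(\FC)$ is trivial because any $\PC_{1,\infty}\in\FC$ serves as its own witness.

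Having shown that $\HC(\FC)$ is admissible and contains $\FC$, the minimality of $\EC(\FC)$ gives $\EC(\FC)\subset\HC(\FC)$, so
\begin{equation*}
  \sup_{\PC_{1,\infty}\in\FC} h(f_{1,\infty};\PC_{1,\infty}) \leq h_{\EC(\FC)}(f_{1,\infty}) \leq h_{\HC(\FC)}(f_{1,\infty}).
\end{equation*}
On the other hand, the defining condition in \eqref{eq_generatedclasschar} immediately yields $h_{\HC(\FC)}(f_{1,\infty}) \leq \sup_{\PC_{1,\infty}\in\FC} h(f_{1,\infty};\PC_{1,\infty})$, so all three quantities coincide.

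There is no real obstacle here; the proof is essentially a bookkeeping exercise that combines the monotonicity of $h(f_{1,\infty};\cdot)$ under refinement (Proposition \ref{prop_entropyprops} (iii)) with the invariance of $h(f_{1,\infty};\cdot)$ under the $\langle m\rangle$-operation (Proposition \ref{prop_entropyprops} (v)). The only point worth noting is that one must verify closure within $\EC_{\max}$ before applying the entropy estimates, which is why the previous proposition (stating that $\EC_{\max}$ is itself admissible) is invoked implicitly at each step.
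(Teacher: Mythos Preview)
Your proof is correct and follows essentially the same approach as the paper's own proof: verify Axioms (A)--(C) directly using Proposition \ref{prop_entropyprops} (iii) and (v), then deduce the entropy equalities from the inclusions $\FC\subset\EC(\FC)\subset\HC(\FC)$. Your write-up is in fact slightly more explicit than the paper's, notably in checking membership in $\EC_{\max}$ before applying the entropy inequalities and in spelling out the final chain of inequalities.
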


\begin{proof}
It is obvious that $\FC \subset \HC(\FC) \subset \EC_{\max}$. Clearly, $\HC(\FC)$ satisfies Axiom (A). It also satisfies Axiom (B), since any sequence $\RC_{1,\infty}$ of partitions coarser than some $\QC_{1,\infty}\in\HC(\FC)$ satisfies $h(f_{1,\infty};\RC_{1,\infty}) \leq h(f_{1,\infty};\QC_{1,\infty}) \leq h(f_{1,\infty};\PC_{1,\infty})$ for some $\PC_{1,\infty}\in\FC$. With the same reasoning and Proposition \ref{prop_entropyprops} (v), we see that $\HC(\FC)$ satisfies Axiom (C) and hence is an admissible class.%
\end{proof}

The preceding proposition shows not only that there exists a multitude of admissible classes, but also that the metric entropy of $f_{1,\infty}$ can be equal to any of the numbers $h(f_{1,\infty};\PC_{1,\infty})$ by taking the one-point set $\FC := \{\PC_{1,\infty}\}$ as a generator for an admissible class. The next corollary immediately follows.%

\begin{cor}
Assume that the sequences $X_{1,\infty}$, $f_{1,\infty}$, $\mu_{1,\infty}$ are constant, i.e., we have an autonomous system $(X,f,\mu)$. Let $\FC$ be the set of all constant sequences of finite measurable partitions of $X$. Then $h_{\EC(\FC)}(f_{1,\infty}) = h_{\mu}(f)$.%
\end{cor}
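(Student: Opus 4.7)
The plan is to apply Proposition \ref{prop_generatedclasses} to reduce the supremum defining $h_{\EC(\FC)}(f_{1,\infty})$ to a supremum over $\FC$ only, and then observe that on a constant sequence of partitions the entropy $h(f_{1,\infty};\PC_{1,\infty})$ specializes to the classical Kolmogorov--Sinai quantity $h_\mu(f;\PC)$.

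First I would invoke Proposition \ref{prop_generatedclasses} to obtain
\begin{equation*}
  h_{\EC(\FC)}(f_{1,\infty}) = \sup_{\PC_{1,\infty}\in\FC} h(f_{1,\infty};\PC_{1,\infty}).
\end{equation*}
Then I would fix a finite measurable partition $\PC$ of $X$ and consider the constant sequence $\PC_{1,\infty} = \{\PC,\PC,\ldots\}\in\FC$. Since all $f_n$ equal $f$ and all $\mu_n$ equal $\mu$, the iterated join reduces to the familiar refinement
\begin{equation*}
  \bigvee_{i=0}^{n-1} f_1^{-i}\PC_{i+1} = \bigvee_{i=0}^{n-1} f^{-i}\PC.
\end{equation*}
In this autonomous setting the sequence $n\mapsto H_\mu\bigl(\bigvee_{i=0}^{n-1} f^{-i}\PC\bigr)$ is subadditive (using $f\mu=\mu$ together with Proposition \ref{prop_conditionalentropy}(iv) and the invariance of entropy under $f^{-1}$), so by the Fekete lemma the $\limsup$ in \eqref{eq_dynamicalentropyonpartition} is an honest limit and equals $h_\mu(f;\PC)$.

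Combining these two steps gives
\begin{equation*}
  h_{\EC(\FC)}(f_{1,\infty}) = \sup_{\PC} h_\mu(f;\PC) = h_\mu(f),
\end{equation*}
where the final supremum ranges over all finite measurable partitions of $X$ and coincides with the Kolmogorov--Sinai entropy by definition. I do not foresee a genuine obstacle here; the corollary is essentially a bookkeeping statement, and the only point that needs to be stated explicitly is the reduction from $\limsup$ to $\lim$ on constant partition sequences, which is precisely where the autonomous hypothesis enters.
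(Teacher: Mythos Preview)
Your proposal is correct and matches the paper's approach exactly: the paper states that the corollary ``immediately follows'' from Proposition~\ref{prop_generatedclasses}, which is precisely your first step, and the reduction $h(f_{1,\infty};\PC_{1,\infty}) = h_\mu(f;\PC)$ for constant sequences is already noted in the paper right after \eqref{eq_dynamicalentropyonpartition}. Your explicit mention of subadditivity and Fekete's lemma simply spells out what the paper left as a remark.
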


\subsection{Invariance, Rokhlin Inequality, and Restrictions} 

In order to be a reasonable quantity, the metric entropy of a system $f_{1,\infty}$ should be an invariant with respect to isomorphims. By an isomorphism between sequences $(X_{1,\infty},\mu_{1,\infty})$ and $(Y_{1,\infty},\nu_{1,\infty})$ of probability spaces we understand a sequence $\pi_{1,\infty} = \{\pi_n\}$ of bi-measurable maps $\pi_n:X_n \rightarrow Y_n$ with $\pi_n\mu_n = \nu_n$. Such a sequence is an isomorphism between the systems $f_{1,\infty}$ on $X_{1,\infty}$ and $g_{1,\infty}$ on $Y_{1,\infty}$ if additionally for each $n\geq1$ the diagram%
\begin{equation*}
\begin{CD}
  X_n @>f_n>> X_{n+1}\\
  @V\pi_n VV @VV\pi_{n+1}V\\
  Y_n @>>g_n> Y_{n+1}
\end{CD}
\end{equation*}
commutes. In this case we also say that the systems $f_{1,\infty}$ and $g_{1,\infty}$ are \emph{conjugate}. If the maps $\pi_n$ are only measurable but not necessarily measurably invertible, we say that the systems $f_{1,\infty}$  
and $g_{1,\infty}$ are \emph{semiconjugate}. The sequence $\pi_{1,\infty}$ is then called a \emph{conjugacy} or a \emph{semiconjugacy from $f_{1,\infty}$ to $g_{1,\infty}$}, respectively.%

Given two admissible classes $\EC$ and $\FC$ for $X_{1,\infty}$ and $Y_{1,\infty}$, resp., we also define the notions of $\EC$-$\FC$-isomorphisms and $\EC$-$\FC$-(semi)conjugacies via the condition that $\pi_{1,\infty}$ respects $\EC$ and $\FC$ in the sense that%
\begin{equation*}
  \PC_{1,\infty} = \{\PC_n\}_{n\geq1} \in \FC \qquad \Rightarrow \qquad \{\pi_n^{-1}(\PC_n)\}_{n\geq1} \in \EC.%
\end{equation*}
In the case of an isomorphism or a conjugacy, the implication into the other direction must hold as well.%

\begin{prop}\label{prop_conjugacy}
Let $(X_{1,\infty},f_{1,\infty},\mu_{1,\infty})$ and $(Y_{1,\infty},g_{1,\infty},\nu_{1,\infty})$ be metric NDS with admissible classes $\EC$ and $\FC$, respectively. Let $\pi_{1,\infty}$ be an $\EC$-$\FC$-semiconjugacy from $f_{1,\infty}$ to $g_{1,\infty}$. Then%
\begin{equation*}
  h_{\FC}(g_{1,\infty}) \leq h_{\EC}(f_{1,\infty}).%
\end{equation*}
\end{prop}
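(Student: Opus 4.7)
The plan is to show that for every $\PC_{1,\infty}\in\FC$ the pulled-back sequence $\QC_{1,\infty}:=\{\pi_n^{-1}(\PC_n)\}_{n\geq1}$ lies in $\EC$ and realizes the same entropy, i.e., $h(g_{1,\infty};\PC_{1,\infty})=h(f_{1,\infty};\QC_{1,\infty})$. Taking the supremum over $\PC_{1,\infty}\in\FC$ will then yield the claimed inequality. Membership $\QC_{1,\infty}\in\EC$ is immediate from the assumption that $\pi_{1,\infty}$ is an $\EC$-$\FC$-semiconjugacy.

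First, I would exploit the commutative diagram to rewrite the join. Iterating the identity $\pi_{n+1}\circ f_n = g_n\circ \pi_n$ yields $\pi_{1+i}\circ f_1^i = g_1^i\circ\pi_1$ for all $i\geq 0$, hence
\begin{equation*}
  f_1^{-i}\QC_{i+1} \;=\; f_1^{-i}\pi_{i+1}^{-1}(\PC_{i+1}) \;=\; \pi_1^{-1}\bigl(g_1^{-i}\PC_{i+1}\bigr).
\end{equation*}
Since preimages commute with joins of partitions, this gives
\begin{equation*}
  \bigvee_{i=0}^{n-1} f_1^{-i}\QC_{i+1} \;=\; \pi_1^{-1}\!\left(\bigvee_{i=0}^{n-1} g_1^{-i}\PC_{i+1}\right).
\end{equation*}

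Next I would use measure invariance of $\pi_1$, namely $\pi_1\mu_1=\nu_1$. For any finite measurable partition $\RC$ of $Y_1$ one has $\mu_1(\pi_1^{-1}R)=\nu_1(R)$ for each $R\in\RC$, so $H_{\mu_1}(\pi_1^{-1}\RC)=H_{\nu_1}(\RC)$. Applied to $\RC=\bigvee_{i=0}^{n-1} g_1^{-i}\PC_{i+1}$, combined with the display above, this gives
\begin{equation*}
  H_{\mu_1}\!\left(\bigvee_{i=0}^{n-1} f_1^{-i}\QC_{i+1}\right) \;=\; H_{\nu_1}\!\left(\bigvee_{i=0}^{n-1} g_1^{-i}\PC_{i+1}\right)
\end{equation*}
for every $n\geq1$. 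Dividing by $n$ and taking $\limsup$ yields $h(f_{1,\infty};\QC_{1,\infty})=h(g_{1,\infty};\PC_{1,\infty})$.

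Finally, since $\QC_{1,\infty}\in\EC$, we have $h(f_{1,\infty};\QC_{1,\infty})\leq h_{\EC}(f_{1,\infty})$, so $h(g_{1,\infty};\PC_{1,\infty})\leq h_{\EC}(f_{1,\infty})$ for every $\PC_{1,\infty}\in\FC$, and the supremum over $\FC$ gives the inequality. There is no real obstacle here; the only point worth care is the bookkeeping in the index shift $\pi_{i+1}\circ f_1^i=g_1^i\circ\pi_1$, which is a straightforward induction using the commutative square at each level. Note also that neither injectivity of $\pi_n$ nor any refinement of $\QC_{1,\infty}$ coming from $\EC$ is used — the semiconjugacy hypothesis alone suffices.
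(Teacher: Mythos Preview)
Your proof is correct and follows essentially the same approach as the paper: both arguments use the iterated semiconjugacy identity $\pi_{i+1}\circ f_1^i=g_1^i\circ\pi_1$ to show $H_{\mu_1}\bigl(\bigvee_{i=0}^{n-1}f_1^{-i}\QC_{i+1}\bigr)=H_{\nu_1}\bigl(\bigvee_{i=0}^{n-1}g_1^{-i}\PC_{i+1}\bigr)$, hence $h(f_{1,\infty};\QC_{1,\infty})=h(g_{1,\infty};\PC_{1,\infty})$, and then take the supremum over $\FC$. The only cosmetic difference is that the paper carries out the computation atom by atom (on intersections $\bigcap_i g_1^{-i}P_{j_{i+1}}$), whereas you work directly at the level of partitions and joins; the content is identical.
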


\begin{proof}
First note that the semiconjugacy identities $\pi_{n+1} \circ f_n = g_n \circ \pi_n$ imply $g_1^i \circ \pi_1 = \pi_{i+1} \circ f_1^i$ for all $i$. Let $\PC_{1,\infty} = \{\PC_n\}$ be a sequence of finite measurable partitions for $Y_{1,\infty}$. Fix $n\in\N$ and $P_{j_i} \in \PC_i$, $i=1,\ldots,n$. Then we find%
\begin{eqnarray*}
  \nu_1\left(\bigcap_{i=0}^{n-1} g_1^{-i}P_{j_{i+1}}\right) &=& \mu_1\left(\pi_1^{-1}\bigcap_{i=0}^{n-1}g_1^{-i}P_{j_{i+1}}\right) = \mu_1\left(\bigcap_{i=0}^{n-1} (g_1^i \circ \pi_1)^{-1}P_{j_{i+1}}\right)\allowdisplaybreaks\\
  &=& \mu_1\left(\bigcap_{i=0}^{n-1} (\pi_{i+1} \circ f_1^i)^{-1}P_{j_{i+1}}\right) = \mu_1\left(\bigcap_{i=0}^{n-1} f_1^{-i}\pi_{i+1}^{-1}P_{j_{i+1}}\right).%
\end{eqnarray*}
Define $\QC_{1,\infty} = \{\QC_n\}$ by $\QC_n := \{\pi_n^{-1}(P) : P \in \PC_n\}$ for all $n\geq1$. Then $\QC_n$ is a finite measurable partition of $X_n$ and from the preceding computation we get%
\begin{equation*}
  H_{\nu_1}\left(\bigvee_{i=0}^{n-1} g_1^{-i}\PC_{i+1}\right) = H_{\mu_1}\left(\bigvee_{i=0}^{n-1} f_1^{-i}\QC_{i+1}\right).%
\end{equation*}
Hence, $h(f_{1,\infty};\QC_{1,\infty}) = h(g_{1,\infty};\PC_{1,\infty})$. Writing $\QC_{1,\infty} = \pi_{1,\infty}^{-1}(\PC_{1,\infty})$, we find%
\begin{eqnarray*}
  h_{\FC}(g_{1,\infty}) &=& \sup_{\PC_{1,\infty}\in\FC}h(g_{1,\infty};\PC_{1,\infty}) = \sup_{\PC_{1,\infty}\in\FC}h(f_{1,\infty};\pi_{1,\infty}^{-1}(\PC_{1,\infty}))\\
  &\leq& \sup_{\QC_{1,\infty}\in\EC}h(f_{1,\infty};\QC_{1,\infty}) = h_{\EC}(f_{1,\infty}),%
\end{eqnarray*}
as desired.%
\end{proof}

For autonomous systems, Proposition \ref{prop_entropyprops} (vi) can be used to show that the entropy depends continuously on the partition, where the set of partitions is endowed with the Rokhlin metric, given by $d_R(\PC,\QC) = H_{\mu}(\PC|\QC) + H_{\mu}(\QC|\PC)$. The nonautonomous analogue of this result is formulated in the next proposition.%

\begin{prop}\label{prop_rokhlinineq}
For two sequences $\PC_{1,\infty},\QC_{1,\infty}\in\EC_{\max}$ let%
\begin{equation*}
  d_R(\PC_{1,\infty},\QC_{1,\infty}) := \sup_{n\geq1}H_{\mu_n}(\PC_n|\QC_n) + \sup_{n\geq1}H_{\mu_n}(\QC_n|\PC_n).%
\end{equation*}
Then $d_R$ is a metric on $\EC_{\max}$ and the function $\PC_{1,\infty} \mapsto h(f_{1,\infty};\PC_{1,\infty})$ is Lipschitz continuous with Lipschitz constant $1$ on $(\EC_{\max},d_R)$.%
\end{prop}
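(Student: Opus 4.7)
The proof has two parts: (a) verifying that $d_R$ is a metric on $\EC_{\max}$, and (b) establishing the $1$-Lipschitz estimate for $h(f_{1,\infty};\cdot)$.

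For (a), note first that $d_R$ takes finite values on $\EC_{\max}\times\EC_{\max}$: by Axiom (A) we have a uniform bound $\#\PC_n,\#\QC_n\leq N$, so Proposition \ref{prop_conditionalentropy}~(iv) gives $H_{\mu_n}(\PC_n|\QC_n)\leq H_{\mu_n}(\PC_n)\leq\log N$ for all $n$, and similarly with the roles swapped. Non-negativity and symmetry are immediate from the definition. For identity of indiscernibles (understood modulo null sets, as is standard for partitions) one uses Proposition \ref{prop_conditionalentropy}~(i): $d_R(\PC_{1,\infty},\QC_{1,\infty})=0$ forces $H_{\mu_n}(\PC_n|\QC_n)=H_{\mu_n}(\QC_n|\PC_n)=0$ for every $n$, which means each $\PC_n$ refines and is refined by $\QC_n$. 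For the triangle inequality, I would apply Proposition \ref{prop_conditionalentropy}~(v) coordinatewise to three sequences $\PC_{1,\infty},\QC_{1,\infty},\RC_{1,\infty}$, giving
\begin{equation*}
  H_{\mu_n}(\PC_n|\RC_n)\leq H_{\mu_n}(\PC_n|\QC_n)+H_{\mu_n}(\QC_n|\RC_n)
\end{equation*}
for every $n$, and the analogous inequality with $\PC$ and $\RC$ interchanged. Taking suprema over $n$ on both sides (using that $\sup(a_n+b_n)\leq\sup a_n+\sup b_n$) and adding the two resulting inequalities yields $d_R(\PC_{1,\infty},\RC_{1,\infty})\leq d_R(\PC_{1,\infty},\QC_{1,\infty})+d_R(\QC_{1,\infty},\RC_{1,\infty})$.

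For (b), the key input is already in hand: Proposition \ref{prop_entropyprops}~(vi) says
\begin{equation*}
  h(f_{1,\infty};\PC_{1,\infty}) \leq h(f_{1,\infty};\QC_{1,\infty}) + \limsup_{n\to\infty}\frac{1}{n}\sum_{i=1}^n H_{\mu_i}(\PC_i|\QC_i).
\end{equation*}
Since each $H_{\mu_i}(\PC_i|\QC_i)\leq \sup_{n\geq1}H_{\mu_n}(\PC_n|\QC_n)$, the Cesàro average on the right is bounded by $\sup_{n\geq1}H_{\mu_n}(\PC_n|\QC_n)$, which in turn is at most $d_R(\PC_{1,\infty},\QC_{1,\infty})$. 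Swapping the roles of $\PC_{1,\infty}$ and $\QC_{1,\infty}$ gives the symmetric inequality, and together they yield $|h(f_{1,\infty};\PC_{1,\infty})-h(f_{1,\infty};\QC_{1,\infty})|\leq d_R(\PC_{1,\infty},\QC_{1,\infty})$.

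The only point requiring any care is part (a), specifically the verification that the supremum-based definition interacts correctly with the termwise conditional-entropy inequalities; everything else is a direct appeal to Propositions \ref{prop_conditionalentropy} and \ref{prop_entropyprops}. No further ingredients beyond Axiom (A) and the two cited propositions are needed.
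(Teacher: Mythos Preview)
Your proof is correct and follows essentially the same approach as the paper: the paper simply remarks that the metric axioms follow from Proposition~\ref{prop_conditionalentropy} and then derives the Lipschitz bound from Proposition~\ref{prop_entropyprops}~(vi) via the same chain of inequalities (Ces\`aro average bounded by the supremum, hence by $d_R$). Your write-up is more explicit on part~(a), and your parenthetical remark that equality of partitions should be read modulo null sets is a sensible clarification the paper leaves implicit.
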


\begin{proof}
The proof that $d_R$ is a metric easily follows from the properties of conditional entropy stated in Proposition \ref{prop_conditionalentropy}. From Proposition \ref{prop_entropyprops} (vi) we conclude the nonautonomous Rokhlin inequality%
\begin{eqnarray*}
 &&  \left|h(f_{1,\infty};\PC_{1,\infty}) - h(f_{1,\infty};\QC_{1,\infty})\right|\\
  && \leq \max\left\{\limsup_{n\rightarrow\infty}\frac{1}{n}\sum_{i=1}^n H_{\mu_i}\left(\PC_i|\QC_i\right),\limsup_{n\rightarrow\infty}\frac{1}{n}\sum_{i=1}^n H_{\mu_i}\left(\QC_i|\PC_i\right)\right\}\\
  && \leq \limsup_{n\rightarrow\infty}\frac{1}{n}\sum_{i=1}^n H_{\mu_i}\left(\PC_i|\QC_i\right) + \limsup_{n\rightarrow\infty}\frac{1}{n}\sum_{i=1}^n H_{\mu_i}\left(\QC_i|\PC_i\right)\\
  && \leq \sup_{n\geq1}H_{\mu_n}\left(\PC_n|\QC_n\right) + \sup_{n\geq1}H_{\mu_n}\left(\QC_n|\PC_n\right),%
\end{eqnarray*}
which finishes the proof.%
\end{proof}

Given a metric NDS $(X_{1,\infty},f_{1,\infty},\mu_{1,\infty})$, assume that we can decompose each of the spaces $X_n$ as a disjoint union $X_n = Y_n \dot{\cup} Z_n$ such that $f_n(Y_n) \subset Y_{n+1}$, $f_n(Z_n) \subset Z_{n+1}$, and $\mu_n(Y_n) \equiv c$ for a constant $0 < c \leq 1$. Then let us consider the restrictions of $f_{1,\infty}$ to the sequences $Y_{1,\infty} := \{Y_n\}$ and $Z_{1,\infty} := \{Z_n\}$, resp., i.e., the systems defined by the maps%
\begin{equation*}
  g_n := f_n|_{Y_n}:Y_n \rightarrow Y_{n+1},\qquad h_n := f_n|_{Z_n}:Z_n\rightarrow Z_{n+1}.%
\end{equation*}
It we consider the probability measure $\nu_n(A) := \mu_n(A)/c$ on $Y_n$, it follows that $(Y_{1,\infty},g_{1,\infty},\nu_{1,\infty})$ is also a metric system. If $c<1$, we can define a corresponding invariant sequence of probability measures for the system $(Z_{1,\infty},h_{1,\infty})$ as well.%

\begin{prop}
Let $\EC$ be an admissible class for $(X_{1,\infty},f_{1,\infty})$ and assume that $\PC_{1,\infty}\in\EC$ implies $\{\PC_n \vee \{Y_n,Z_n\}\}\in\EC$. Then%
\begin{equation*}
  \EC|_{Y_{1,\infty}} := \left\{\QC_{1,\infty}\ |\ \exists \PC_{1,\infty}\in\EC\ :\ \QC_n \equiv \{Y_n\} \vee \PC_n \right\}%
\end{equation*}
is an admissible class for $(Y_{1,\infty},g_{1,\infty})$ and%
\begin{equation*}
  c h_{\EC|_{Y_{1,\infty}}}(g_{1,\infty}) \leq h_{\EC}(f_{1,\infty}).%
\end{equation*}
If $c=1$, then equality holds.%
\end{prop}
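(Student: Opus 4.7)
The plan is to verify the three admissibility axioms for $\EC|_{Y_{1,\infty}}$, then derive the entropy inequality by extending each $\QC_{1,\infty}$ on $Y_{1,\infty}$ to a partition of $X_{1,\infty}$ that carries the same information plus a negligible piece on $Z$, and finally observe that when $c=1$ the $Z$-part vanishes.

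First I would record the basic commutation identity that is used everywhere: since $f_n(Y_n)\subset Y_{n+1}$ and $f_n(Z_n)\subset Z_{n+1}$, an inductive argument gives $f_n^i(Y_n)\subset Y_{n+i}$ and $f_n^i(Z_n)\subset Z_{n+i}$, hence $f_n^{-i}(A)\cap Y_n = g_n^{-i}(A\cap Y_{n+i})$ for every $A\subset X_{n+i}$, and in particular $f_1^{-i}(Y_{i+1}) = Y_1$, $f_1^{-i}(Z_{i+1}) = Z_1$. With this identity, Axioms (A)--(C) for $\EC|_{Y_{1,\infty}}$ are straightforward: (A) follows because the restriction to $Y_n$ has cardinality at most that of $\PC_n$; (B) is obtained by noting that a coarsening $\RC_n$ of $\QC_n$ on $Y_n$ lifts to the coarsening $\RC_n \cup \{Z_n\}$ of $\PC_n \vee \{Y_n,Z_n\}\in\EC$, which by Axiom (B) for $\EC$ lies in $\EC$; and (C) follows from the identity above, which shows that the restriction to $Y_n$ of $\bigvee_{i=0}^{m-1}f_n^{-i}\PC_{n+i}$ equals $\bigvee_{i=0}^{m-1}g_n^{-i}\QC_{n+i}$, combined with Axiom (C) for $\EC$.

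For the inequality, given $\QC_{1,\infty}\in\EC|_{Y_{1,\infty}}$ pick the lift $\PC_{1,\infty}\in\EC$ and set $\PC'_n := \PC_n\vee\{Y_n,Z_n\}\in\EC$ (by hypothesis on $\EC$). Using $f_1^{-i}(Y_{i+1})=Y_1$ and $f_1^{-i}(Z_{i+1})=Z_1$, the join $\bigvee_{i=0}^{n-1}f_1^{-i}\PC'_{i+1}$ splits as a partition $\AC_n$ of $Y_1$ together with a partition $\BC_n$ of $Z_1$, where $\AC_n = \bigvee_{i=0}^{n-1}g_1^{-i}\QC_{i+1}$ by the commutation identity. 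Since $\mu_1(A)=c\,\nu_1(A)$ for $A\subset Y_1$, a direct computation yields
\begin{equation*}
H_{\mu_1}\!\left(\bigvee_{i=0}^{n-1}f_1^{-i}\PC'_{i+1}\right) = c\,H_{\nu_1}(\AC_n) - c\log c + H_{\mu_1}|_{Z_1}(\BC_n) \geq c\,H_{\nu_1}(\AC_n) - c\log c,
\end{equation*}
where the last term is nonnegative. Dividing by $n$ and taking $\limsup$ gives $h(f_{1,\infty};\PC'_{1,\infty})\geq c\cdot h(g_{1,\infty};\QC_{1,\infty})$. Since $\PC'_{1,\infty}\in\EC$, the left side is at most $h_{\EC}(f_{1,\infty})$, and taking the supremum over $\QC_{1,\infty}\in\EC|_{Y_{1,\infty}}$ gives $c\,h_{\EC|_{Y_{1,\infty}}}(g_{1,\infty})\leq h_{\EC}(f_{1,\infty})$.

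For the case $c=1$, each $Z_n$ has $\mu_n$-measure zero, so for any $\PC_{1,\infty}\in\EC$ the restriction $\QC_n := \PC_n|_{Y_n}$ lies in $\EC|_{Y_{1,\infty}}$ and, modulo null sets, $H_{\mu_1}(\bigvee_{i=0}^{n-1}f_1^{-i}\PC_{i+1}) = H_{\nu_1}(\bigvee_{i=0}^{n-1}g_1^{-i}\QC_{i+1})$, yielding $h_{\EC}(f_{1,\infty})\leq h_{\EC|_{Y_{1,\infty}}}(g_{1,\infty})$ and hence equality. The main obstacle (really the only subtle point) is making sure the restriction/extension procedure behaves compatibly with the refinement operation, which is precisely handled by the commutation identity $f_n^{-i}(A)\cap Y_n=g_n^{-i}(A\cap Y_{n+i})$; everything else is book-keeping of entropies under a convex combination of probability measures.
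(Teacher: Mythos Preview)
Your proof is correct and follows essentially the same route as the paper: you verify Axioms (A)--(C) for $\EC|_{Y_{1,\infty}}$ via the restriction/extension correspondence, then compare $h(g_{1,\infty};\QC_{1,\infty})$ with $h(f_{1,\infty};\PC_n\vee\{Y_n,Z_n\})$ through the same entropy splitting, and finally note that the $Z$-part is null when $c=1$. The only cosmetic difference is that for Axiom~(B) the paper coarsens $\PC_{1,\infty}$ directly (without invoking the extra hypothesis on $\EC$), whereas you lift the coarsening to $\RC_n\cup\{Z_n\}$ as a coarsening of $\PC_n\vee\{Y_n,Z_n\}$; both arguments are valid.
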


\begin{proof}
It is clear that $\EC|_{Y_{1,\infty}}$ satisfies Axiom (A). Let $\QC_{1,\infty}\in\EC_{1,\infty}|_{Y_{1,\infty}}$. Then there exists $\PC_{1,\infty}\in\EC$ such that the elements of each $\QC_n$ are the intersections of the elements of $\PC_n$ with $Y_n$. Now assume that $\RC_{1,\infty}$ is a sequence of partitions for $Y_{1,\infty}$ which is coarser than $\QC_{1,\infty}$. Then the elements of each $\RC_n$ are unions of elements of $\QC_n$. Taking corresponding unions of elements of $\PC_n$ for each $n$, one constructs a sequence $\SC_{1,\infty}\in\EC$ coarser than $\PC_{1,\infty}$ such that $\{Y_n\} \vee \SC_{1,\infty} = \RC_{1,\infty}$, which proves that $\EC|_{Y_{1,\infty}}$ satisfies Axiom (B). Finally, if $\QC_n \equiv \{Y_n\} \vee \PC_n$ for some $\PC_{1,\infty}\in\EC$, then for all $k,m\geq1$ it holds that%
\begin{equation*}
  \bigvee_{i=0}^{m-1}g_k^{-i}\QC_{i+k} = \bigvee_{i=0}^{m-1}f_k^{-i}(\{Y_{i+k}\}\vee\PC_{i+k}) = \{Y_k\} \vee \bigvee_{i=0}^{m-1}f_k^{-i}\PC_{i+k},%
\end{equation*}
which implies that $\EC|_{Y_{1,\infty}}$ satisfies Axiom (C). To prove the inequality of entropies, consider $\QC_{1,\infty}\in\EC|_{Y_{1,\infty}}$ and the corresponding $\PC_{1,\infty}\in\EC$ with $\QC_n \equiv \{Y_n\}\vee\PC_n$. Then%
\begin{eqnarray*}
  &&  H_{\nu_1}\left(\bigvee_{i=0}^{n-1}g_1^{-i}\QC_{i+1}\right) = H_{\nu_1}\left(\{Y_1\}\vee\bigvee_{i=0}^{n-1}f_1^{-i}\PC_{i+1}\right)\\
 &=& -\frac{1}{c}\sum_{P \in \bigvee_i f_1^{-i}\PC_{i+1}}\mu_1(P \cap Y_1) \log\frac{\mu_1(P\cap Y_1)}{c}\\
  &=& -\frac{1}{c}\left[\sum_{P \in \bigvee_i f_1^{-i}\PC_{i+1}}\mu_1(P \cap Y_1) \log\mu_1(P\cap Y_1) - \sum_{P \in \bigvee_i f_1^{-i}\PC_{i+1}}\mu_1(P \cap Y_1) \log c\right].%
\end{eqnarray*}
The last summand gives%
\begin{equation*}
  \sum_{P \in \bigvee_i f_1^{-i}\PC_{i+1}}\mu_1(P \cap Y_1) \log c = \mu_1(Y_1)\log c = c \log c,%
\end{equation*}
and thus can be omitted in the computation of $h(g_{1,\infty};\QC_{1,\infty})$. We obtain%
\begin{equation*}
  h(g_{1,\infty};\QC_{1,\infty}) = \limsup_{n\rightarrow\infty}\frac{1}{n} \left[ -\frac{1}{c}\sum_{P \in \bigvee_i f_1^{-i}\PC_{i+1}}\mu_1(P \cap Y_1) \log\mu_1(P\cap Y_1)\right].%
\end{equation*}
If we consider the sequence $\widetilde{\PC}_{1,\infty}$ of partitions $\widetilde{\PC}_n := \{ P \cap Y_n : P \in \PC_n \} \cup \{ P \cap Z_n : P \in \PC_n\}$, we see that%
\begin{equation}\label{eq_entineq}
  h(g_{1,\infty};\QC_{1,\infty}) \leq \frac{1}{c}h(f_{1,\infty};\widetilde{\PC}_{1,\infty}).%
\end{equation}
By the assumption on $\EC$ it follows that $\widetilde{\PC}_{1,\infty} \in \EC$ and hence the assertion follows. In the case $c=1$, the measures $\mu_n(Z_n)$ are all zero, and hence equality holds in \eqref{eq_entineq}. Since $\widetilde{\PC}_{1,\infty}$ is finer than $\PC_{1,\infty}$, we have%
\begin{equation*}
  h_{\EC}(f_{1,\infty}) = \sup_{\widetilde{\PC}_{1,\infty}}h(f_{1,\infty};\widetilde{\PC}_{1,\infty}) = c \sup_{\QC_{1,\infty} \in \EC|_{Y_{1,\infty}}}h(g_{1,\infty};\QC_{1,\infty}) = ch_{\EC|_{Y_{1,\infty}}}(g_{1,\infty}),%
\end{equation*}
which finishes the proof.%
\end{proof}

\begin{remark}
For a topological NDS given by a sequence of homeomorphisms, endowed with an invariant sequence of Borel probability measures, the above proposition can be applied to the decomposition $Y_n := \supp\mu_n$, $Z_n := X_n \backslash \supp\mu_n$, where $\supp\mu_n = \{x \in X_n | \forall \eps>0: \mu_n(B(x,\eps))>0\}$ is the support of the measure $\mu_n$.%
\end{remark}

\subsection{The Power Rule for Metric Entropy}%

Given a metric NDS $(X_{1,\infty},f_{1,\infty})$ and $k\in\N$, we define the $k$-th power system $(X^{[k]}_{1,\infty},f^{[k]}_{1,\infty})$ in exactly the same way as we did for topological systems. It is very easy to see that this system is a metric system as well.%

If $\EC$ is an admissible class for $(X_{1,\infty},f_{1,\infty})$, we denote by $\EC^{[k]}$ the class of all sequences of partitions for $X^{[k]}_{1,\infty}$ which are defined by restricting the sequences in $\EC$ to the spaces in $X^{[k]}_{1,\infty}$, i.e., $\PC_{1,\infty} = \{\PC_n\}\in\EC$ iff%
\begin{equation*}
  \PC^{[k]}_{1,\infty} := \{\PC_{(n-1)k+1}\}_{n\geq1} \in \EC^{[k]}.%
\end{equation*}

\begin{prop}\label{prop_metricpowerrule}
If $\EC$ is an admissible class for $(X_{1,\infty},f_{1,\infty})$, then $\EC^{[k]}$ is an admissible class for $(X^{[k]}_{1,\infty},f^{[k]}_{1,\infty})$ and%
\begin{equation*}
  h_{\EC^{[k]}}\left(f^{[k]}_{1,\infty}\right) = k\cdot h_{\EC}\left(f_{1,\infty}\right).%
\end{equation*}
\end{prop}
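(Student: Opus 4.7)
The plan is to follow the structure of the proof of the equicontinuous topological power rule, with the three admissibility axioms of $\EC$ playing the role that Lemma~\ref{lem_topologicalrefinements} plays there. There are two tasks: first verify that $\EC^{[k]}$ is admissible for $(X^{[k]}_{1,\infty},f^{[k]}_{1,\infty})$, then prove the two inequalities.

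For admissibility of $\EC^{[k]}$: Axiom~(A) is immediate, since any bound on $\#\PC_n$ for $\PC_{1,\infty}\in\EC$ passes to the subsequence $\#\PC_{(n-1)k+1}$. For Axiom~(B), given a coarsening $\TC_{1,\infty}\preceq\SC_{1,\infty}$ of some $\SC_{1,\infty}=\{\PC_{(n-1)k+1}\}\in\EC^{[k]}$, I would define $\widetilde{\PC}_m:=\PC_m$ when $m$ is not of the form $(n-1)k+1$ and $\widetilde{\PC}_{(n-1)k+1}:=\TC_n$; this sequence is coarser than $\PC_{1,\infty}$, hence in $\EC$ by Axiom~(B) for $\EC$, so $\TC_{1,\infty}\in\EC^{[k]}$. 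For Axiom~(C), I would use the identity
\[
  \bigvee_{i=0}^{m-1}(f^{[k]}_n)^{-i}\SC_{n+i} \;=\; \bigvee_{i=0}^{m-1} f_{(n-1)k+1}^{-ik}\PC_{(n+i-1)k+1},
\]
which is coarser than $\bigvee_{j=0}^{mk-1} f_{(n-1)k+1}^{-j}\PC_{(n-1)k+1+j}$, i.e.~the $((n-1)k+1)$-st term of $\PC^{\langle mk\rangle}_{1,\infty}(f_{1,\infty})$. By Axiom~(C) for $\EC$ the latter sequence lies in $\EC$, and combining with the already established Axiom~(B) for $\EC^{[k]}$ yields $\SC^{\langle m\rangle}_{1,\infty}(f^{[k]}_{1,\infty})\in\EC^{[k]}$.

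For the inequality $h_{\EC^{[k]}}(f^{[k]}_{1,\infty})\leq k\cdot h_{\EC}(f_{1,\infty})$: given $\SC_{1,\infty}\in\EC^{[k]}$ with $\SC_n=\PC_{(n-1)k+1}$, the join $\bigvee_{i=0}^{n-1}(f^{[k]}_1)^{-i}\SC_{i+1}=\bigvee_{i=0}^{n-1}f_1^{-ik}\PC_{ik+1}$ is a subjoin of $\bigvee_{j=0}^{nk-1}f_1^{-j}\PC_{j+1}$, and monotonicity of $H_{\mu_1}$ under refinement together with Proposition~\ref{prop_entropyprops}(iv) gives $h(f^{[k]}_{1,\infty};\SC_{1,\infty})\leq k\cdot h(f_{1,\infty};\PC_{1,\infty})$. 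For the reverse inequality I would mimic the topological argument by associating to each $\PC_{1,\infty}\in\EC$ the sequence
\[
  \QC_n \;:=\; \bigvee_{j=0}^{k-1} f_{(n-1)k+1}^{-j}\PC_{(n-1)k+1+j},
\]
which is the restriction of $\PC^{\langle k\rangle}_{1,\infty}(f_{1,\infty})$ to the indices $(n-1)k+1$; Axiom~(C) for $\EC$ ensures $\PC^{\langle k\rangle}_{1,\infty}(f_{1,\infty})\in\EC$ and hence $\QC_{1,\infty}\in\EC^{[k]}$. The same regrouping of the double join as in the topological proof, combined with Proposition~\ref{prop_entropyprops}(iv), then gives $h(f^{[k]}_{1,\infty};\QC_{1,\infty}) = k\cdot h(f_{1,\infty};\PC_{1,\infty})$, and taking the supremum over $\PC_{1,\infty}\in\EC$ finishes the argument.

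The main obstacle is conceptual rather than technical: to realize ``$\geq$'' one must produce an element of $\EC^{[k]}$ whose $n$-fold join under $f^{[k]}_{1,\infty}$ recovers the full join $\bigvee_{l=0}^{nk-1}f_1^{-l}\PC_{l+1}$ for arbitrary $\PC_{1,\infty}\in\EC$. This is exactly what Axiom~(C) guarantees, and it explains why Axiom~(C) was introduced in the first place---it is the measure-theoretic counterpart to Lemma~\ref{lem_topologicalrefinements}, which states that Bowen-refinements of sequences with bounded-away-from-zero Lebesgue numbers inherit the same property.
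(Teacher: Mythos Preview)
Your proposal is correct and follows essentially the same route as the paper's proof: the verification of Axioms~(A)--(C) for $\EC^{[k]}$ matches the paper's argument (extending a coarsening trivially off the subsequence for (B), and observing that $\SC^{\langle m\rangle}_{1,\infty}(f^{[k]}_{1,\infty})$ is coarser than the restriction of $\PC^{\langle mk\rangle}_{1,\infty}(f_{1,\infty})$ for (C)), and the entropy equality is obtained via the same auxiliary sequence $\QC_{1,\infty}=(\PC^{\langle k\rangle}_{1,\infty}(f_{1,\infty}))^{[k]}$ together with Proposition~\ref{prop_entropyprops}(iv). The only cosmetic difference is that for the inequality ``$\leq$'' you argue directly that $\bigvee_{i=0}^{n-1}f_1^{-ik}\PC_{ik+1}$ is coarser than $\bigvee_{j=0}^{nk-1}f_1^{-j}\PC_{j+1}$, whereas the paper phrases this as $\PC^{[k]}_{1,\infty}\preceq\QC_{1,\infty}$ and invokes Proposition~\ref{prop_entropyprops}(iii); these are the same observation.
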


\begin{proof}
It is clear that $\EC^{[k]}$ satisfies Axiom (A). To verify Axiom (B), consider $\PC_{1,\infty}^{[k]}\in\EC^{[k]}$ for some $\PC_{1,\infty}\in\EC$. If $\QC_{1,\infty}$ is a sequence of partitions for $X^{[k]}_{1,\infty}$ which is coarser than $\PC^{[k]}_{1,\infty}$ (i.e., $\QC_n \preceq \PC_{(n-1)k+1}$ for all $n\geq1$), we can extend $\QC_{1,\infty}$ to a sequence $\RC_{1,\infty}$ of partitions for $X_{1,\infty}$ which is coarser than $\PC_{1,\infty}$. This can be done in a trivial way by putting%
\begin{equation*}
  \RC_n := \left\{\begin{array}{ll}
                    \PC_n & \mbox{if } n-1 \mbox{ is not a multiple of } k,\\
                    \QC_{1+(n-1)/k} & \mbox{if } n-1 \mbox{ is a multiple of } k.%
                  \end{array}\right.%
\end{equation*}
Then it follows that $\RC_n = \PC_n \preceq \PC_n$ in the first case, and $\RC_n = \QC_{1+(n-1)/k} \preceq \PC_n$ in the second one. Since $\EC$ satisfies Axiom (B), we know that $\RC_{1,\infty}\in\EC$, which implies that
$\QC_{1,\infty} = \RC^{[k]}_{1,\infty} \in \EC^{[k]}$. To show that $\EC^{[k]}$ satisfies Axiom (C), let $\PC_{1,\infty}\in\EC$ and $m\geq1$. We have to show that the sequence $\QC_{1,\infty}$ defined by%
\begin{equation*}
  \QC_n := \bigvee_{i=0}^{m-1}f_{(n-1)k+1}^{-ik}\PC_{(i+n-1)k+1}%
\end{equation*}
is an element of $\EC^{[k]}$. To this end, first note that%
\begin{eqnarray*}
  \QC_n \preceq \bigvee_{i=0}^{mk-1}f_{(n-1)k+1}^{-i}\PC_{(n-1)k+1+i} =: \RC_n.%
\end{eqnarray*}
The sequence $\RC_{1,\infty}$ can be extended to an element $\SC_{1,\infty}$ of $\EC$, which is given by%
\begin{equation*}
  \SC_n := \bigvee_{i=0}^{mk-1}f_n^{-i}\PC_{n+i}.%
\end{equation*}
Indeed, $\SC_{1,\infty}\in\EC$, since $\EC$ satisfies Axiom (C). Hence, $\RC_{1,\infty} = \SC_{1,\infty}^{[k]} \in \EC^{[k]}$ and since $\EC^{[k]}$ satisfies Axiom (B), this implies $\QC_{1,\infty}\in\EC^{[k]}$. Now let us prove the formula for the entropies. Let $\PC_{1,\infty}\in\EC$. We define a sequence $\QC_{1,\infty}$ of finite measurable partitions for $X^{[k]}_{1,\infty}$ as follows:%
\begin{equation*}
  \QC_n := \bigvee_{j=0}^{k-1}f^{-j}_{(n-1)k+1}\PC_{(n-1)k+1+j}.%
\end{equation*}
The sequence $\QC_{1,\infty}$ is an element of $\EC^{[k]}$, since it is of the form $\QC_{1,\infty} = \RC^{[k]}_{1,\infty}$ with $\RC_{1,\infty}\in\EC$. This follows by combining the facts that $\PC_{1,\infty}\in\EC$ and $\EC$ satisfies Axiom (C). We find that%
\begin{eqnarray*}
  h\left(f^{[k]}_{1,\infty};\QC_{1,\infty}\right) &=& \limsup_{n\rightarrow\infty}\frac{1}{n}H_{\mu_1}\left(\bigvee_{i=0}^{n-1}f_1^{-ik}\QC_{i+1}\right)\allowdisplaybreaks\\
                                                  &=& \limsup_{n\rightarrow\infty}\frac{1}{n}H_{\mu_1}\left(\bigvee_{i=0}^{n-1}f_1^{-ik}\bigvee_{j=0}^{k-1}f^{-j}_{ik+1}\PC_{ik+1+j}\right)\allowdisplaybreaks\\
                                                  &=& \limsup_{n\rightarrow\infty}\frac{1}{n}H_{\mu_1}\left(\bigvee_{i=0}^{n-1}\bigvee_{j=0}^{k-1}f_1^{-(ik+j)}\PC_{(ik+j)+1}\right)\allowdisplaybreaks\\
                                                  &=& k\cdot\limsup_{n\rightarrow\infty}\frac{1}{nk}H_{\mu_1}\left(\bigvee_{i=0}^{nk-1}f_1^{-i}\PC_{i+1}\right) = k\cdot h\left(f_{1,\infty};\PC_{1,\infty}\right).%
\end{eqnarray*}
To obtain the last equality we used Proposition \ref{prop_entropyprops} (iv). Now consider also the sequence $\PC_{1,\infty}^{[k]}$. It is obvious that $\QC_{1,\infty}$ is finer than $\PC^{[k]}_{1,\infty}$. Hence, using Proposition \ref{prop_entropyprops} (iii), we find%
\begin{equation*}
  h\left(f^{[k]}_{1,\infty};\PC_{1,\infty}^{[k]}\right) \leq h\left(f^{[k]}_{1,\infty};\QC_{1,\infty}\right) = k \cdot h\left(f_{1,\infty};\PC_{1,\infty}\right).%
\end{equation*}
Taking the supremum over all $\PC_{1,\infty}^{[k]}$ on the left-hand side and over all $\PC_{1,\infty}$ on the right-hand side, the inequality%
\begin{equation*}
  h_{\EC^{[k]}}\left(f^{[k]}_{1,\infty}\right) \leq k \cdot h_{\EC}\left(f^{[k]}_{1,\infty}\right)%
\end{equation*}
follows. The converse inequality follows from%
\begin{equation*}
  h_{\EC^{[k]}}\left(f^{[k]}_{1,\infty}\right) \geq h\left(f^{[k]}_{1,\infty};\QC_{1,\infty}\right) = k \cdot h\left(f_{1,\infty};\PC_{1,\infty}\right),%
\end{equation*}
which holds for every $\PC_{1,\infty}\in\EC$.%
\end{proof}

\section{Relation to Topological Entropy}\label{sec_vi}%

In order to prove a variational inequality, we consider a topological NDS $(X_{1,\infty},f_{1,\infty})$ with an $f_{1,\infty}$-invariant sequence $\mu_{1,\infty}$ of Borel probability measures. When speaking of measurable partitions in this context, we mean ``exact'' partitions and not partitions in the sense of measure theory, where different elements of the partition may have a nonempty overlap of measure zero. We will frequently use the property of inner regularity of Borel measures, i.e., $\mu(A) = \sup\{\mu(K) : K \subset A \mbox{ compact}\}$ for any Borel subset of a compact metric space.%

\subsection{The Misiurewicz Class}%

In this subsection, we introduce a special admissible class which we will use to prove the variational inequality. This class is constructed in such a way that its elements are just perfect to apply the arguments of Misiurewicz's proof of the variational principle to them. Therefore, we call it the \emph{Misiurewicz class}.%

Let $(X_{1,\infty},f_{1,\infty})$ be a topological NDS with an $f_{1,\infty}$-invariant sequence of Borel probability measures $\mu_{1,\infty} = \{\mu_n\}$.%

We define the Misiurewicz class $\EC_{\Mis} \subset \EC_{\max}$ as follows. A sequence $\PC_{1,\infty}\in\EC_{\max}$, $\PC_n = \{P_{n,1},\ldots,P_{n,k_n}\}$, is an element of $\EC_{\Mis}$ iff for every $\eps>0$ there exist $\delta>0$ and compact sets $C_{n,i}\subset P_{n,i}$ ($n\geq1$, $1 \leq i \leq k_n$) such that for every $n\geq1$ the following two hypotheses are satisfied:%
\begin{enumerate}
\item[(a)] $\mu_n(P_{n,i}\backslash C_{n,i}) \leq \eps$.%
\item[(b)] The minimal distance between the sets $C_{n,i}$ is at least $\delta$, i.e.,%
\begin{equation*}
  \min_{1\leq i<j \leq k_n} \min\left\{\varrho_n(x,y)\ :\ (x,y) \in C_{n,i} \tm C_{n,j}\right\} \geq \delta.%
\end{equation*}
\end{enumerate}

\begin{prop}\label{prop_emis_admissibleclass}
If $f_{1,\infty}$ is equicontinuous, then $\EC_{\Mis}$ is an admissible class.%
\end{prop}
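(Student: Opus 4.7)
The plan is to verify Axioms (A), (B), (C) in turn, reserving equicontinuity for (C) only.

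Axiom (A) is immediate: by definition $\EC_{\Mis}\subset\EC_{\max}$, so every $\PC_{1,\infty}\in\EC_{\Mis}$ already admits a uniform bound $N\geq1$ on $\#\PC_n$.

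For Axiom (B), fix $\PC_{1,\infty}\in\EC_{\Mis}$ with $\#\PC_n\leq N$, and let $\QC_{1,\infty}\preceq\PC_{1,\infty}$. Given $\eps>0$, I would apply the Misiurewicz property to $\PC_{1,\infty}$ with parameter $\eps/N$, obtaining $\delta>0$ and compact sets $C_{n,i}\subset P_{n,i}$ pairwise $\delta$-separated. Each $Q_{n,j}\in\QC_n$ is a union $\bigcup_{P_{n,i}\subset Q_{n,j}}P_{n,i}$, and the natural candidate is $D_{n,j}:=\bigcup_{P_{n,i}\subset Q_{n,j}}C_{n,i}$, a compact set with $\mu_n(Q_{n,j}\setminus D_{n,j})\leq N\cdot(\eps/N)=\eps$. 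Distinct $D_{n,j},D_{n,j'}$ are unions of $C_{n,i}$'s with disjoint index sets, so their distance is still at least $\delta$. No equicontinuity is used.

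The bulk of the work is Axiom (C): given $\PC_{1,\infty}\in\EC_{\Mis}$ and $m\geq1$, show $\PC_{1,\infty}^{\langle m \rangle}(f_{1,\infty})\in\EC_{\Mis}$. A typical element of $\PC_n^{\langle m \rangle}$ has the form $P_{\mathbf{i}}:=\bigcap_{l=0}^{m-1}f_n^{-l}(P_{n+l,i_l})$ indexed by $\mathbf{i}=(i_0,\ldots,i_{m-1})$. Given $\eps>0$, I apply the Misiurewicz property to $\PC_{1,\infty}$ with $\eps/m$ to obtain $\delta'>0$ and compact sets $C_{l,i}\subset P_{l,i}$, and take as the approximating set
\[
  C_{\mathbf{i}} \;:=\; \bigcap_{l=0}^{m-1} f_n^{-l}(C_{n+l,i_l}),
\]
which is compact since each $f_n^l$ is continuous and $X_n$ is compact. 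If $x\in P_{\mathbf{i}}\setminus C_{\mathbf{i}}$, then $f_n^l(x)\in P_{n+l,i_l}\setminus C_{n+l,i_l}$ for some $l$, so by subadditivity together with $f_n^l\mu_n=\mu_{n+l}$,
\[
  \mu_n(P_{\mathbf{i}}\setminus C_{\mathbf{i}}) \leq \sum_{l=0}^{m-1}\mu_{n+l}(P_{n+l,i_l}\setminus C_{n+l,i_l}) \leq m\cdot(\eps/m) = \eps,
\]
so hypothesis (a) holds.

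The main obstacle, and the only place equicontinuity enters, is establishing a uniform lower bound on $\dist(C_{\mathbf{i}},C_{\mathbf{i}'})$; the argument is the direct analogue of Lemma \ref{lem_topologicalrefinements}. If $\mathbf{i}\neq\mathbf{i}'$, choose $l$ with $i_l\neq i'_l$; then for any $x\in C_{\mathbf{i}}$ and $y\in C_{\mathbf{i}'}$, the images $f_n^l(x),f_n^l(y)$ lie in the disjoint compact sets $C_{n+l,i_l},C_{n+l,i'_l}$, giving $\varrho_{n+l}(f_n^l(x),f_n^l(y))\geq\delta'$. By equicontinuity of $f_{1,\infty}$, the family $\{f_n^l:n\geq1,\,0\leq l\leq m-1\}$ is also uniformly equicontinuous, so there exists $\delta>0$ such that $\varrho_n(x,y)<\delta$ forces $\varrho_{n+l}(f_n^l(x),f_n^l(y))<\delta'$ for all admissible $n,l$. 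Contrapositively, $\varrho_n(x,y)\geq\delta$, giving the required $\delta$-separation of $C_{\mathbf{i}}$ and $C_{\mathbf{i}'}$ with a $\delta$ independent of $n$. This verifies hypothesis (b) and completes the argument.
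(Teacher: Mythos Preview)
Your proof is correct and follows essentially the same route as the paper's: the same unions of $C_{n,i}$'s for Axiom~(B), the same intersections $\bigcap_l f_n^{-l}(C_{n+l,i_l})$ for Axiom~(C), the same measure estimate via $f_n^l\mu_n=\mu_{n+l}$, and the same contrapositive use of equicontinuity of the finite family $\{f_n^l:0\le l\le m-1\}$ to obtain the uniform separation. The only small omission is that an admissible class must be nonempty; the paper notes this by observing that the trivial sequence $\PC_n\equiv\{X_n\}$ lies in $\EC_{\Mis}$.
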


\begin{proof}
First note that $\EC_{\Mis}$ is nonempty, since it contains the trivial sequence defined by $\PC_n := \{X_n\}$ for all $n\geq1$. To show that $\EC_{\Mis}$ satisfies Axiom (B), assume that $\PC_{1,\infty} = \{\PC_n\}\in\EC_{\Mis}$, $\PC_n = \{P_{n,1},\ldots,P_{n,k_n}\}$, and let $\QC_{1,\infty}$ be a sequence which is coarser than $\PC_{1,\infty}$. Let $\QC_n$ be given by%
\begin{equation*}
  \QC_n = \{Q_{n,1},\ldots,Q_{n,l_n}\}.%
\end{equation*}
Then every element of $\QC_n$ must be a disjoint union of elements of $\PC_n$:%
\begin{equation*}
  Q_{n,i} = \bigcup_{\alpha=1}^{N_{n,i}}P_{n,j_{\alpha}}.%
\end{equation*}
Since $\PC_{1,\infty}\in\EC_{\Mis}$, we can choose compact sets $C_{n,i}\subset P_{n,i}$ and $\delta>0$ depending on a given $\eps = \widetilde{\eps}/(\max_{n\geq1}\#\PC_n)$ such that (a) and (b) hold for $\PC_{1,\infty}$. Define%
\begin{equation*}
  D_{n,i} := \bigcup_{\alpha=1}^{N_{n,i}}C_{n,j_{\alpha}},\qquad n\geq1,\ i=1,\ldots,l_n.%
\end{equation*}
It is clear that $D_{n,i}$ is a compact subset of $Q_{n,i}$. Moreover, it holds that%
\begin{eqnarray*}
  && \mu_n\left(Q_{n,i} \backslash D_{n,i}\right) = \mu_n\left(\bigcup_{\alpha=1}^{N_{n,i}}P_{n,j_{\alpha}} \backslash \bigcup_{\alpha=1}^{N_{n,i}}C_{n,j_{\alpha}}\right)\\
     && = \mu_n\left(\bigcup_{\alpha=1}^{N_{n,i}}[P_{n,j_{\alpha}}\backslash C_{n,j_{\alpha}}]\right) = \sum_{\alpha=1}^{N_{n,i}}\mu_n\left(P_{n,j_{\alpha}}\backslash C_{n,j_{\alpha}}\right)
      \leq \frac{N_{n,i}\widetilde{\eps}}{\max_{n\geq1}\#\PC_n} \leq \widetilde{\eps}.%
\end{eqnarray*}
For $i\neq j$ we have%
\begin{eqnarray*}
  && \min\left\{\varrho_n(x,y)\ :\ (x,y) \in \bigcup_{\alpha=1}^{N_{n,i}}C_{n,j_{\alpha}} \tm \bigcup_{\beta=1}^{N_{n,j}}C_{n,j_{\beta}}\right\}\\
  && = \min_{\alpha,\beta} \min\left\{\varrho_n(x,y)\ :\ (x,y) \in C_{n,j_{\alpha}} \tm C_{n,j_{\beta}}\right\} \geq \delta,%
\end{eqnarray*}
since each $C_{n,j_{\alpha}}$ is disjoint from all $C_{n,j_{\beta}}$. Hence, $\QC_{1,\infty}\in\EC_{\Mis}$. To show that Axiom (C) holds, let $\PC_{1,\infty}=\{\PC_n\}\in\EC_{\Mis}$, $\PC_n = \{P_{n,1},\ldots,P_{n,k_n}\}$, and $m\geq1$. Consider the sequence $\PC^{\langle m\rangle}_{1,\infty}(f_{1,\infty})$. For given $\eps = (1/m)\widetilde{\eps}>0$ choose $\delta>0$ and compact sets $C_{n,i}\subset P_{n,i}$ such that (a) and (b) hold for $\PC_{1,\infty}$. For every $r\geq1$ and $(j_0,\ldots,j_{m-1})\in \{1,\ldots,k_r\} \tm \cdots \tm \{1,\ldots,k_{r+m-1}\}$ define%
\begin{equation*}
  D_{r,(j_0,\ldots,j_{m-1})} := \bigcap_{i=0}^{m-1}f_r^{-i}(C_{r+i,j_i}).%
\end{equation*}
These sets are obviously compact subsets of $X_r$ and each element of $\PC^{\langle m \rangle}_r(f_{1,\infty})$ contains exactly one such set. We have%
\begin{eqnarray*}
  && \mu_r\left(\bigcap_{i=0}^{m-1}f_r^{-i}(P_{r+i,j_i}) \backslash \bigcap_{i=0}^{m-1}f_r^{-i}(C_{r+i,j_i})\right)\allowdisplaybreaks\\
   && = \mu_r\left(\bigcup_{l=0}^{m-1}\left[\left(\bigcap_{i=0}^{m-1}f_r^{-i}(P_{r+i,j_i})\right)\backslash f_r^{-l}(C_{r+l,j_l})\right]\right)\allowdisplaybreaks\\
   && \leq \sum_{l=0}^{m-1}\mu_r\left(f_r^{-l}(P_{r+l,j_l}) \backslash f_r^{-l}(C_{r+l,j_l})\right)\allowdisplaybreaks\\
   && = \sum_{l=0}^{m-1} f_r^l\mu_r\left(P_{r+l,j_l} \backslash C_{r+l,j_l}\right) = \sum_{l=0}^{m-1} \mu_{r+l}\left(P_{r+l,j_l} \backslash C_{r+l,j_l}\right) \leq m\eps = \widetilde{\eps}.%
\end{eqnarray*}
Finally, in order to show that (b) holds for $\PC^{\langle m\rangle}(f_{1,\infty})$, we need the assumption of equicontinuity for $f_{1,\infty}$, which yields a number $\rho>0$ such that $\varrho_r(x,y)<\rho$ implies $\varrho_{r+i}(f_r^i(x),f_r^i(y)) < \delta$ for all $r\geq1$ and $i=0,1,\ldots,m-1$ (cf.~the proof of Lemma \ref{lem_topologicalrefinements}). Now consider two sets $D_{r,(j_0,\ldots,j_{m-1})}$ and $D_{r,(l_0,\ldots,l_{m-1})}$. These sets are disjoint iff there is an index $\alpha \in \{0,1,\ldots,m-1\}$ such that $j_{\alpha} \neq l_{\alpha}$. This implies $\varrho_{r+\alpha}(f_r^{\alpha}(x),f_r^{\alpha}(y)) \geq \delta$, and hence $\varrho_r(x,y) \geq \rho$. Thus, we have found that for every $r\geq1$ it holds that%
\begin{equation*}
  \min_{(j_0,\ldots,j_{m-1}) \neq \atop \quad(l_0,\ldots,l_{m-1})} \min\left\{\varrho_r(x,y)\ :\ (x,y) \in D_{r,(j_0,\ldots,j_{m-1})} \tm D_{r,(l_0,\ldots,l_{m-1})}\right\} \geq \rho,%
\end{equation*}
which completes the proof.%
\end{proof}

In \cite[Thm.~B]{KSn} it is shown that an equiconjugacy preserves the topological entropy of a topological NDS. An equiconjugacy between systems $f_{1,\infty}$ and $g_{1,\infty}$ is an equicontinuous sequence $\pi_{1,\infty} = \{\pi_n\}$ of homeomorphisms such that also $\{\pi_n^{-1}\}$ is equicontinuous and $\pi_{n+1}\circ f_n = g_n\circ \pi_n$. The following proposition shows that an equiconjugacy also preserves the Misiurewicz class and hence the associated metric entropy.%

\begin{prop}\label{prop_emisequiconj}
Consider two equicontinuous topological NDSs $(X_{1,\infty},f_{1,\infty})$ and $(Y_{1,\infty},g_{1,\infty})$. Assume that $\pi_{1,\infty}$ is an equisemiconjugacy from $f_{1,\infty}$ to $g_{1,\infty}$, i.e., it holds that $\pi_{n+1}\circ f_n = g_n \circ \pi_n$ for all $n\geq1$ and the sequence $\{\pi_n\}$ is equicontinuous. Then, if $\mu_{1,\infty}$ is an $f_{1,\infty}$-invariant sequence, $\nu_{1,\infty} = \{\nu_n\}$, $\nu_n := \pi_n\mu_n$, is $g_{1,\infty}$-invariant and $\pi_{1,\infty}$ is an $\EC_{\Mis}(f_{1,\infty})$-$\EC_{\Mis}(g_{1,\infty})$-semiconjugacy. Hence,%
\begin{equation*}
  h_{\EC_{\Mis}}(g_{1,\infty}) \leq h_{\EC_{\Mis}}(f_{1,\infty}).%
\end{equation*}
\end{prop}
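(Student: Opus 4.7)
The plan is to prove the statement in three steps: (i) verify directly that $\nu_{1,\infty}$ is $g_{1,\infty}$-invariant, (ii) show that for every $\PC_{1,\infty}\in\EC_{\Mis}(g_{1,\infty})$ its pullback $\pi_{1,\infty}^{-1}(\PC_{1,\infty})$ lies in $\EC_{\Mis}(f_{1,\infty})$, and (iii) invoke Proposition \ref{prop_conjugacy} to derive the entropy inequality.

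Step (i) is immediate: using the semiconjugacy relation and $f_n\mu_n = \mu_{n+1}$, I get $g_n\nu_n = g_n\pi_n\mu_n = \pi_{n+1}f_n\mu_n = \pi_{n+1}\mu_{n+1} = \nu_{n+1}$.

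For step (ii), fix $\PC_{1,\infty}\in\EC_{\Mis}(g_{1,\infty})$, write $\PC_n = \{P_{n,1},\ldots,P_{n,k_n}\}$, and set $\QC_n := \{\pi_n^{-1}(P_{n,1}),\ldots,\pi_n^{-1}(P_{n,k_n})\}$. Since $\pi_n$ is continuous, each $\QC_n$ is a finite measurable partition of $X_n$ with $\#\QC_n \leq \#\PC_n$, so the cardinalities are uniformly bounded (Axiom (A)). Given $\eps > 0$, choose $\delta > 0$ and compact sets $C_{n,i}\subset P_{n,i}$ witnessing the Misiurewicz conditions for $\PC_{1,\infty}$ with respect to the metrics on $Y_n$. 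Define $D_{n,i} := \pi_n^{-1}(C_{n,i})$. Each $D_{n,i}$ is closed, hence compact, as a closed subset of the compact space $X_n$. The measure estimate follows from the pushforward identity:
\begin{equation*}
  \mu_n\left(\pi_n^{-1}(P_{n,i}) \setminus D_{n,i}\right) = \mu_n\left(\pi_n^{-1}(P_{n,i}\setminus C_{n,i})\right) = \nu_n(P_{n,i}\setminus C_{n,i}) \leq \eps.
\end{equation*}

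The main obstacle, where equicontinuity of $\{\pi_n\}$ is essential, is the separation condition. By equicontinuity, there exists $\rho > 0$ such that for every $n \geq 1$, whenever the distance between $x,y \in X_n$ is less than $\rho$, the distance between $\pi_n(x)$ and $\pi_n(y)$ in $Y_n$ is less than $\delta$. If now $x \in D_{n,i}$ and $y \in D_{n,j}$ with $i \neq j$, then $\pi_n(x) \in C_{n,i}$ and $\pi_n(y) \in C_{n,j}$, so their distance in $Y_n$ is at least $\delta$; the choice of $\rho$ forces the distance between $x$ and $y$ in $X_n$ to be at least $\rho$. Hence $\rho$ is a uniform lower bound for the pairwise distances between the $D_{n,i}$'s, proving $\QC_{1,\infty}\in\EC_{\Mis}(f_{1,\infty})$. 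Thus $\pi_{1,\infty}$ is an $\EC_{\Mis}(f_{1,\infty})$-$\EC_{\Mis}(g_{1,\infty})$-semiconjugacy, and Proposition \ref{prop_conjugacy} delivers the claimed inequality $h_{\EC_{\Mis}}(g_{1,\infty}) \leq h_{\EC_{\Mis}}(f_{1,\infty})$.
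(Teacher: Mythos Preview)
Your proof is correct and follows essentially the same approach as the paper: verify invariance of $\nu_{1,\infty}$ directly, pull back the compact witnesses $C_{n,i}$ via $\pi_n^{-1}$, use the pushforward identity for the measure estimate, use equicontinuity of $\{\pi_n\}$ to turn the $\delta$-separation in $Y_n$ into a uniform $\rho$-separation in $X_n$, and then apply Proposition~\ref{prop_conjugacy}. The only differences are cosmetic (you name the partitions $\PC$ and $\QC$ with the opposite convention and phrase the separation step directly rather than by contradiction).
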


\begin{proof}
We have $g_n\nu_n = g_n(\pi_n\mu_n) = \pi_{n+1}f_n\mu_n = \pi_{n+1}\mu_{n+1} = \nu_{n+1}$ and hence, $\nu_{1,\infty}$ is $g_{1,\infty}$-invariant. To show that $\pi_{1,\infty}$ is an $\EC_{\Mis}(f_{1,\infty})$-$\EC_{\Mis}(g_{1,\infty})$-semiconjugacy, consider some $\QC_{1,\infty} \in \EC_{\Mis}(g_{1,\infty})$ and let $\PC_n := \{\pi_n^{-1}(Q) : Q \in \QC_n\}$ for all $n\geq1$. To show that $\PC_{1,\infty}\in\EC_{\Mis}(f_{1,\infty})$, let $\eps>0$. Then, if $\QC_n = \{Q_{n,1},\ldots,Q_{n,k_n}\}$, we find compact sets $C_{n,i}\subset Q_{n,i}$ and $\delta>0$ such that $\nu_n(Q_{n,i}\backslash C_{n,i})\leq\eps$ and%
\begin{equation}\label{eq_min}
  \min_{1\leq i<j \leq k_n}\min\left\{\varrho_{Y_n}(y_1,y_2)\ :\ (y_1,y_2) \in C_{n,i}\tm C_{n,j}\right\} \geq \delta.%
\end{equation}
Since $\{\pi_n\}$ is equicontinuous, there exists $\rho>0$ such that $\varrho_{X_n}(x_1,x_2) < \rho$ implies $\varrho_{Y_n}(\pi_n(x_1),\pi_n(x_2)) < \delta$ for all $n\geq1$ and $x_1,x_2\in X_n$. Now consider the closed (and hence compact) sets $\pi_n^{-1}(C_{n,i}) \subset \pi_n^{-1}(Q_{n,i}) =: P_{n,i} \in \PC_n$. We have $\mu_n(P_{n,i} \backslash \pi_n^{-1}(C_{n,i})) = \nu_n(Q_{n,i}\backslash C_{n,i}) \leq \eps$. Assume to the contrary that there exist $n\in\N$, $i\neq j$, and $x_1 \in \pi_n^{-1}(C_{n,i})$, $x_2\in\pi_n^{-1}(C_{n,j})$, such that $\varrho_{X_n}(x_1,x_2) < \rho$. This implies $\varrho_{Y_n}(\pi_n(x_1),\pi_n(x_2)) < \delta$. Since $\pi_n(x_1) \in C_{n,i}$ and $\pi_n(x_2)\in C_{n,j}$, this contradicts \eqref{eq_min}. Hence, $\PC_{1,\infty}\in\EC_{\Mis}(f_{1,\infty})$ and the rest follows from Proposition \ref{prop_conjugacy}.%
\end{proof}

\subsection{The Variational Inequality}%

Now we are in position to prove the general variational inequality following the lines of Misiurewicz's proof \cite{Mis}.%

\begin{theo}\label{thm_general_varinequality}
For an equicontinuous topological NDS $(X_{1,\infty},f_{1,\infty})$ with an invariant sequence $\mu_{1,\infty}$ it holds that%
\begin{equation*}
  h_{\EC_{\Mis}}(f_{1,\infty}) \leq h_{\tp}(f_{1,\infty}).%
\end{equation*}
\end{theo}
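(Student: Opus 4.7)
The plan is to adapt Misiurewicz's classical proof of the variational principle to the nonautonomous setting in two stages: first establish a preliminary inequality with an additive defect of $\log 2$ by a counting argument on Bowen balls, then remove this defect by invoking the metric and topological power rules proved earlier in the paper.

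For the first stage, fix $\PC_{1,\infty} \in \EC_{\Mis}$, let $N$ be a uniform bound on $\#\PC_n$ provided by Axiom (A), and let $\eps > 0$. Using the definition of $\EC_{\Mis}$, I choose $\delta > 0$ and compact sets $C_{n,i} \subset P_{n,i}$ satisfying (a) and (b), and form the ``cleaned-up'' partitions $\QC_n := \{C_{n,0}, C_{n,1}, \ldots, C_{n,k_n}\}$ with $C_{n,0} := X_n \setminus \bigcup_{i \geq 1} C_{n,i}$. Since $\mu_n(C_{n,0}) \leq N\eps$ and every $C_{n,i}$ with $i \geq 1$ lies inside a single atom of $\PC_n$, the standard computation gives $H_{\mu_n}(\PC_n \mid \QC_n) \leq N\eps \log N$, so Proposition \ref{prop_entropyprops}(vi) yields $h(f_{1,\infty};\PC_{1,\infty}) \leq h(f_{1,\infty};\QC_{1,\infty}) + N\eps\log N$. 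To bound the $\QC$-entropy I count the atoms of $\bigvee_{i=0}^{n-1}f_1^{-i}\QC_{i+1}$: let $E_n \subset X_1$ be a minimal $(n,\delta/2)$-spanning set; by condition (b) any $(\delta/2)$-ball in $X_{i+1}$ meets at most one $C_{i+1,j}$ with $j \geq 1$ and possibly $C_{i+1,0}$, so each Bowen ball $B(y,\delta/2;\varrho_{1,n})$ with $y \in E_n$ meets at most $2^n$ atoms. Covering $X_1$ by these Bowen balls gives
\[
H_{\mu_1}\Bigl(\bigvee_{i=0}^{n-1}f_1^{-i}\QC_{i+1}\Bigr) \leq n\log 2 + \log r_{\spn}(n,\delta/2,f_{1,\infty}),
\]
whence $h(f_{1,\infty};\QC_{1,\infty}) \leq \log 2 + h_{\tp}(f_{1,\infty})$, and combining with the conditional-entropy estimate I obtain the preliminary bound
\[
h(f_{1,\infty};\PC_{1,\infty}) \leq \log 2 + h_{\tp}(f_{1,\infty}) + N\eps\log N.
\]

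For the second stage, I apply the preliminary bound to the $k$-th power system $f^{[k]}_{1,\infty}$ together with the partition sequence $\QC^{(k)}_n := \bigvee_{j=0}^{k-1} f^{-j}_{(n-1)k+1}\PC_{(n-1)k+1+j}$. The argument in the proof of Proposition \ref{prop_emis_admissibleclass} (which combines equicontinuity with Axiom (C)) shows that $\PC^{\langle k \rangle}_{1,\infty}(f_{1,\infty}) \in \EC_{\Mis}(f_{1,\infty})$; restricting the associated compact sets to the subsequence of indices $(n-1)k+1$ puts $\QC^{(k)}_{1,\infty}$ into $\EC_{\Mis}(f^{[k]}_{1,\infty})$, and note that $f^{[k]}_{1,\infty}$ is itself equicontinuous by the same argument used in the proof of Lemma \ref{lem_topologicalrefinements}. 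The entropy identity in the proof of Proposition \ref{prop_metricpowerrule} gives $h(f^{[k]}_{1,\infty};\QC^{(k)}_{1,\infty}) = k \cdot h(f_{1,\infty};\PC_{1,\infty})$, and the topological power rule gives $h_{\tp}(f^{[k]}_{1,\infty}) = k \cdot h_{\tp}(f_{1,\infty})$. Applying the preliminary bound to $(f^{[k]}_{1,\infty},\QC^{(k)}_{1,\infty})$ with the size bound $\#\QC^{(k)}_n \leq N^k$ and a fresh parameter $\eps' > 0$, and then dividing by $k$, yields
\[
h(f_{1,\infty};\PC_{1,\infty}) \leq \frac{\log 2}{k} + h_{\tp}(f_{1,\infty}) + N^k \eps' \log N.
\]
Choosing $\eps' = \eps'(k)$ small enough that $N^k \eps' \log N \leq 1/k$ and letting $k \to \infty$ gives $h(f_{1,\infty};\PC_{1,\infty}) \leq h_{\tp}(f_{1,\infty})$; taking the supremum over $\PC_{1,\infty} \in \EC_{\Mis}$ finishes the proof.

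The main obstacle I anticipate is the elimination of the spurious $\log 2$ from the preliminary bound. The counting step unavoidably produces a factor of $2$ per time step, because each Bowen ball can simultaneously touch a single compact piece $C_{i+1,j^*}$ and the garbage set $C_{i+1,0}$. Classically, Misiurewicz cancels this defect by passing to $f^N$ and invoking the power rules; the present nonautonomous analogue is the pair $(f^{[k]}_{1,\infty},\QC^{(k)}_{1,\infty})$, and the substantive technical content is verifying that $\QC^{(k)}_{1,\infty}$ still belongs to the Misiurewicz class of $f^{[k]}_{1,\infty}$. This is precisely what Axiom (C) together with the equicontinuous refinement argument of Proposition \ref{prop_emis_admissibleclass} and the entropy identity from the proof of Proposition \ref{prop_metricpowerrule} were designed to provide, so the scheme goes through.
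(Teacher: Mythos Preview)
Your proposal is correct and follows essentially the same Misiurewicz-style argument as the paper. The only cosmetic differences are that you keep the approximation parameter $\eps$ free (yielding a defect of $\log 2 + N\eps\log N$) whereas the paper fixes $\eps = 1/(k\log k)$ to get the constant defect $\log 2 + 1$, and you carry out the power-rule cancellation at the level of a single partition sequence $\QC^{(k)}_{1,\infty}$ rather than first passing to the supremum and invoking $\EC_{\Mis}^{[k]} \subset \EC_{\Mis}(f^{[k]}_{1,\infty})$ as the paper does; both routes are equivalent.
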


\begin{proof}
Let $\PC_{1,\infty}\in\EC_{\Mis}$. We may assume that each $\PC_n$ has the same number $k$ of elements, $\PC_n = \{P_{n,1},\ldots,P_{n,k}\}$. By definition of the Misiurewicz class, we find compact sets $Q_{n,i}\subset P_{n,i}$ (for all $n,i$) such that%
\begin{equation*}
  \mu_n(P_{n,i}\backslash Q_{n,i}) \leq \frac{1}{k\log k},\qquad i=1,\ldots,k,\ n\geq1,%
\end{equation*}
and $\delta>0$ with%
\begin{equation}\label{eq_delta}
  \min_{1\leq i<j \leq k} \min\left\{\varrho_n(x,y)\ :\ (x,y) \in Q_{n,i} \tm Q_{n,j}\right\} \geq \delta.%
\end{equation}
By setting $Q_{n,0} := X_n \backslash \bigcup_{i=1}^k Q_{n,i}$ we can define another sequence $\QC_{1,\infty}$ of measurable partitions $\QC_n := \{Q_{n,0},Q_{n,1},\ldots,Q_{n,k}\}$. As in Misiurewicz's proof one finds $H_{\mu_n}\left(\PC_n|\QC_n\right) \leq 1$, which by Proposition \ref{prop_entropyprops} (vi) leads to the inequality%
\begin{equation}\label{eq_partitionscomparison}
  h\left(f_{1,\infty};\PC_{1,\infty}\right) \leq h\left(f_{1,\infty};\QC_{1,\infty}\right) + 1.%
\end{equation}
Define a sequence $\UC_{1,\infty}$ of open covers $\UC_n$ of $X_n$ by%
\begin{equation*}
  \UC_n := \left\{Q_{n,0} \cup Q_{n,1},\ldots,Q_{n,0} \cup Q_{n,k}\right\}.%
\end{equation*}
To see that the sets $Q_{n,0}\cup Q_{n,i}$ are open, consider their complements $Q_{n,1} \cup \ldots \cup Q_{n,i-1} \cup Q_{n,i+1} \cup \ldots \cup Q_{n,k}$, which are finite unions of compact sets and hence closed. For a fixed $m\geq1$, let $E_m \subset X_1$ be a maximal $(m,\delta)$-separated set. From \eqref{eq_delta} it follows that each $(\delta/2)$-ball in $X_n$ intersects at most two elements of $\QC_n$ for any $n\geq1$.
Hence, we can associate to each $x\in E_m$ at most $2^m$ different elements of $\bigvee_{i=0}^{m-1}f_1^{-i}\QC_{i+1}$, which implies%
\begin{equation*}
  \#\left[\bigvee_{i=0}^{m-1}f_1^{-i}\QC_{i+1}\right] \leq 2^m r_{\sep}\left(m,\frac{\delta}{2},f_{1,\infty}\right).%
\end{equation*}
Consequently, we obtain%
\begin{equation*}
  H_{\mu_1}\left(\bigvee_{i=0}^{m-1}f_1^{-i}\QC_{i+1}\right) \leq \log\#\left[\bigvee_{i=0}^{m-1}f_1^{-i}\QC_{i+1}\right] \leq \log r_{\sep}\left(m,\frac{\delta}{2},f_{1,\infty}\right) + m \log 2.%
\end{equation*}
Using \eqref{eq_partitionscomparison}, we therefore have%
\begin{eqnarray*}
  h\left(f_{1,\infty};\PC_{1,\infty}\right) &\leq& \limsup_{m\rightarrow\infty}\frac{1}{m}\log r_{\sep}\left(m,\frac{\delta}{2},f_{1,\infty}\right) + \log 2 + 1\\
  &\leq& h_{\tp}(f_{1,\infty}) + \log 2 + 1.%
\end{eqnarray*}
Taking the supremum over all $\PC_{1,\infty}\in\EC_{\Mis}$, we find%
\begin{equation*}
  h_{\EC_{\Mis}}(f_{1,\infty}) \leq h_{\tp}(f_{1,\infty}) + \log 2 + 1.%
\end{equation*}
That the constant term $\log 2 + 1$ can be omitted in this estimate now follows from a careful application of the power rules for topological and metric entropy. Inspecting the definition of the Misiurewicz class, one sees that for every $k\geq1$ the admissible class $\EC_{\Mis}^{[k]}$ is contained in the Misiurewicz class of $f^{[k]}_{1,\infty}$. Therefore, the arguments that we have applied to the system $(X_{1,\infty},f_{1,\infty})$ can equally be applied to all of the power systems $(X^{[k]}_{1,\infty},f^{[k]}_{1,\infty})$, $k\geq1$. Hence, using the power rules (Proposition \ref{prop_powerrule} and Proposition \ref{prop_metricpowerrule}), we obtain%
\begin{equation*}
  h_{\EC_{\Mis}}(f_{1,\infty}) \leq h_{\tp}(f_{1,\infty}) + \frac{\log 2 + 1}{k}.%
\end{equation*}
Since this holds for every $k\geq1$, sending $k$ to infinity gives the result.%
\end{proof}

An interesting corollary of Theorem \ref{thm_general_varinequality} is the following generalized variational principle for autonomous systems.%

\begin{cor}
For a topological autonomous system $(X,f)$ it holds that%
\begin{equation*}
  \sup_{\mu_{1,\infty}}h_{\EC_{\Mis}(f,\mu_{1,\infty})}(f) = h_{\tp}(f),%
\end{equation*}
where the supremum is taken over all sequences $\mu_{1,\infty}$ with $f\mu_n \equiv \mu_{n+1}$.%
\end{cor}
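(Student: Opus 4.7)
The plan is to prove the two inequalities separately, with the nontrivial content coming essentially from the classical variational principle plus a check that the constant sequence setting sits well inside the Misiurewicz framework.

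For the upper bound ``$\leq$'', I would view $(X,f)$ as the constant NDS $f_{1,\infty} = \{f,f,\ldots\}$ on $X_{1,\infty} = \{X,X,\ldots\}$. Since $X$ is compact and $f$ continuous, $f$ is uniformly continuous, so this constant sequence is equicontinuous, and the Bowen metrics $\varrho_{1,n}$ reduce to the classical Bowen metrics of $f$; hence $h_{\tp}(f_{1,\infty}) = h_{\tp}(f)$. Theorem~\ref{thm_general_varinequality} then yields $h_{\EC_{\Mis}(f,\mu_{1,\infty})}(f) \leq h_{\tp}(f)$ for every admissible $\mu_{1,\infty}$, and taking the supremum preserves the bound.

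For the lower bound ``$\geq$'', the strategy is to restrict the supremum to constant sequences $\mu_{1,\infty} = \{\mu,\mu,\ldots\}$ arising from $f$-invariant Borel probability measures $\mu$ (such a $\mu_{1,\infty}$ is trivially $f_{1,\infty}$-invariant), and to recover the classical Kolmogorov--Sinai entropy $h_\mu(f)$ from below. The key step is to show that for every finite Borel partition $\PC = \{P_1,\ldots,P_k\}$ of $X$, the constant sequence $\PC_{1,\infty} = \{\PC,\PC,\ldots\}$ lies in $\EC_{\Mis}(f,\mu_{1,\infty})$. Given $\eps>0$, inner regularity of the Borel measure $\mu$ on the compact metric space $X$ provides compact sets $C_i \subset P_i$ with $\mu(P_i \setminus C_i) \leq \eps$; since the $C_i$ are pairwise disjoint compact sets, $\delta := \min_{i\neq j}\dist(C_i,C_j)$ is strictly positive, and the same choices $C_{n,i} \equiv C_i$, $\delta$ work for every $n$, verifying hypotheses (a) and (b) in the definition of $\EC_{\Mis}$.

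Once $\PC_{1,\infty}\in\EC_{\Mis}(f,\mu_{1,\infty})$, the nonautonomous entropy of the constant sequence collapses to the classical one:
\begin{equation*}
  h(f_{1,\infty};\PC_{1,\infty}) = \limsup_{n\to\infty}\frac{1}{n}H_\mu\!\left(\bigvee_{i=0}^{n-1}f^{-i}\PC\right) = h_\mu(f;\PC),
\end{equation*}
so $h_{\EC_{\Mis}(f,\mu_{1,\infty})}(f) \geq \sup_\PC h_\mu(f;\PC) = h_\mu(f)$. Invoking the classical variational principle $h_{\tp}(f) = \sup_\mu h_\mu(f)$ and taking the supremum over $f$-invariant $\mu$ (a subfamily of all admissible $\mu_{1,\infty}$) finishes the inequality. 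No step is a serious obstacle; the only thing to get right is the regularity argument that produces the compact sets $C_i$ with uniform separation, and this is standard once one observes that finiteness of $\PC$ together with pairwise disjointness of compact sets forces a positive minimum distance.
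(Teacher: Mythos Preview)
Your proposal is correct and follows essentially the same route as the paper: the upper bound from Theorem~\ref{thm_general_varinequality}, and the lower bound by restricting to constant sequences $\mu_{1,\infty}$ given by $f$-invariant measures, verifying that constant partition sequences lie in $\EC_{\Mis}$, and invoking the classical variational principle. The paper merely asserts that ``the associated Misiurewicz classes contain all constant sequences,'' whereas you have spelled out the inner-regularity/positive-distance argument; your level of detail is appropriate and nothing is missing.
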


\begin{proof}
The inequality ``$\leq$'' holds by Theorem \ref{thm_general_varinequality}. The converse inequality follows from the classical variational principle, if we consider only the constant sequences $\mu_{1,\infty}$, i.e., the invariant measures of $f$, and assure ourselves that the associated Misiurewicz classes contain all constant sequences.%
\end{proof}

\begin{cor}
Let $f_{1,\infty}$ be an equicontinuous sequence of (not necessarily strictly) monotone maps $f_n:X\rightarrow X$ where $X$ is either a compact interval or a circle. Then for every $f_{1,\infty}$-invariant sequence $\mu_{1,\infty}$ it holds that $h_{\EC_{\Mis}}(f_{1,\infty})=0$.%
\end{cor}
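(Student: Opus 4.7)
The plan is to apply the variational inequality of Theorem \ref{thm_general_varinequality}, which under the equicontinuity hypothesis yields $h_{\EC_{\Mis}}(f_{1,\infty}) \leq h_{\tp}(f_{1,\infty})$, so the corollary reduces to showing $h_{\tp}(f_{1,\infty}) = 0$ by a direct count of separated sets. The crucial observation is that a composition of monotone maps is monotone, so every iterate $f_1^i = f_i \circ \cdots \circ f_1$ is a monotone self-map of $X$.

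I would fix $\eps > 0$ and consider an $(n,\eps)$-separated subset $E = \{x_1, \ldots, x_m\}$ of $X$, enumerated in the natural linear order of $X$ (cyclic order on the circle). For each $j \in \{1, \ldots, m-1\}$ the Bowen separation condition supplies a witness index $i_j \in \{0, \ldots, n-1\}$ with $\varrho_{i_j+1}(f_1^{i_j}(x_j), f_1^{i_j}(x_{j+1})) > \eps$. For each fixed $i$, monotonicity of $f_1^i$ forces the images $f_1^i(x_1), \ldots, f_1^i(x_m)$ to inherit the order on $X$, so the consecutive distances $\varrho_{i+1}(f_1^i(x_j), f_1^i(x_{j+1}))$ telescope along $X$ and are bounded by a constant $C$ depending only on $X$ (its diameter in the interval case, its length in the circle case). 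Consequently at most $C/\eps$ of the witness indices can equal any given $i$, and summing over $i$ yields $m \leq 1 + nC/\eps$. This polynomial bound gives $r_{\sep}(n,\eps,f_{1,\infty}) = O(n)$, whose exponential growth rate vanishes; hence $h_{\tp}(f_{1,\infty}) = 0$ and the corollary follows from the variational inequality.

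I expect the main subtlety to lie in the circle case, where ``monotone'' must be read so that each $f_n$ preserves or reverses the cyclic order without net winding (equivalently, its monotone lift to the universal cover $\R$ changes by at most one period); otherwise the telescoping sum in the previous paragraph would pick up a multiplicative factor equal to the degree of $f_1^i$, which could grow with $i$ even under the equicontinuity of the individual $f_n$. Under this natural reading of ``not necessarily strictly monotone'' on $S^1$, the estimate applies uniformly on both geometries, and Theorem \ref{thm_general_varinequality} closes the proof.
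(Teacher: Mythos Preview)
Your proposal is correct and follows the paper's approach: apply Theorem \ref{thm_general_varinequality} to reduce the claim to $h_{\tp}(f_{1,\infty}) = 0$, which the paper dispatches in one line by citing \cite[Thm.~D]{KSn}. You instead supply a direct separated-set count for this topological fact---essentially the standard argument that monotone interval and circle maps have zero entropy---and your treatment, including the caveat on the meaning of ``monotone'' in the circle case, is sound.
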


\begin{proof}
This follows from \cite[Thm.~D]{KSn}, which asserts that the corresponding topological entropy is zero.%
\end{proof}

\subsection{Large Misiurewicz Classes} 

Up to now, we only know that the Misiurewicz class $\EC_{\Mis}$ contains the trivial sequence of partitions. If it would contain no further sequences, Theorem \ref{thm_general_varinequality} would not give any valuable information on the metric or topological entropy. The aim of this subsection is to find conditions on invariant sequences of measures which give rise to a large Misiurewicz class. The simplest case consists in a system $(X_{1,\infty},f_{1,\infty},\mu_{1,\infty})$, where both $X_{1,\infty}$ and $\mu_{1,\infty}$ are constant, say $X_n\equiv X$ and $\mu_n\equiv\mu$. Then any finite measurable partition $\PC$ of $X$ gives rise to a constant sequence $\PC_n \equiv \PC$ of partitions which is obviously contained in $\EC_{\Mis}$. The following proposition slightly generalizes this situation.%

\begin{prop}
Let $(X_{1,\infty},f_{1,\infty})$ be an equicontinuous NDS with an $f_{1,\infty}$-invariant sequence $\mu_{1,\infty}$. If $X_{1,\infty}$ is constant and the closure of $\{\mu_n\}$ with respect to the strong topology on the space of probability measures is compact, then $\EC_{\Mis}$ contains all constant sequences of partitions.%
\end{prop}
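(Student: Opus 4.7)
The plan is to take an arbitrary finite measurable partition $\PC = \{P_1,\ldots,P_k\}$ of the common space $X := X_n$ and verify that the constant sequence $\PC_n \equiv \PC$ belongs to $\EC_{\Mis}$. Given $\eps > 0$, one must produce a uniform $\delta > 0$ together with compact sets $C_{n,i} \subset P_i$ so that $\mu_n(P_i \setminus C_{n,i}) \leq \eps$ for every $n$ and $i$, and so that the cells $C_{n,1},\ldots,C_{n,k}$ are pairwise $\delta$-separated for every $n$. The nontrivial content is uniformity in $n$: a single-measure argument via inner regularity alone would only yield an $n$-dependent constant $\delta_n$.

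The decisive use of the hypothesis is a reduction to finitely many measures. Since the closure of $\{\mu_n\}_{n\geq1}$ in the strong (total variation) topology is compact, it is totally bounded; hence there exist probability measures $\nu_1,\ldots,\nu_N$ and an assignment $n \mapsto j(n)$ such that $\|\mu_n - \nu_{j(n)}\|_{\mathrm{TV}} < \eps/2$ for every $n$. For each fixed $j$, inner regularity applied to $\nu_j$ on each cell $P_i$ yields a compact set $K_{j,i} \subset P_i$ with $\nu_j(P_i \setminus K_{j,i}) < \eps/2$. The sets $K_{j,1},\ldots,K_{j,k}$ are pairwise disjoint compact subsets of the metric space $X$, so their pairwise distances are strictly positive, and since we range over finitely many $j$ and finitely many pairs $(i,i')$,
\[
  \delta := \min_{1\leq j \leq N}\ \min_{1\leq i < i' \leq k}\dist(K_{j,i},K_{j,i'}) > 0.
\]

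To assemble the partitions, set $C_{n,i} := K_{j(n),i}$ for each $n$. Condition (b) in the definition of $\EC_{\Mis}$ is then immediate, since $C_{n,i}$ and $C_{n,i'}$ lie in the same $j(n)$-family and are therefore at distance at least $\delta$. For condition (a), the triangle inequality for total variation yields
\[
  \mu_n(P_i \setminus C_{n,i}) \leq \|\mu_n - \nu_{j(n)}\|_{\mathrm{TV}} + \nu_{j(n)}(P_i \setminus K_{j(n),i}) < \frac{\eps}{2} + \frac{\eps}{2} = \eps,
\]
which completes the verification that $\PC_{1,\infty} \in \EC_{\Mis}$.

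The hard part is really just recognizing that the ``strong'' hypothesis is exactly what is needed: it upgrades the per-measure regularity input (which produces a potentially vanishing sequence $\delta_n$) into a finite reservoir of template compact families, and the common separation constant of this finite reservoir furnishes the uniform $\delta$. I would also note in passing that equicontinuity of $f_{1,\infty}$, although part of the hypotheses, plays no role in this argument itself; it is needed only upstream, to guarantee via Proposition \ref{prop_emis_admissibleclass} that $\EC_{\Mis}$ is in fact an admissible class, so that the conclusion carries meaning.
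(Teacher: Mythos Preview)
Your argument is correct and follows the same essential strategy as the paper's: use compactness of the closure of $\{\mu_n\}$ to reduce the uniform inner-regularity problem to finitely many ``reference'' measures, then extract a single positive $\delta$ from that finite data. There are, however, two small differences worth flagging.

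First, a terminological wrinkle. You interpret the strong topology as the total-variation norm topology (hence ``compact $\Rightarrow$ totally bounded'', and you can pick $\nu_1,\ldots,\nu_N$ with $\|\mu_n - \nu_{j(n)}\|_{\mathrm{TV}} < \eps/2$). The paper, by contrast, explicitly characterizes the strong topology by setwise convergence, $\mu_n \to \mu \Leftrightarrow \mu_n(A) \to \mu(A)$ for all Borel $A$, and uses open-cover compactness: for each $\mu$ in the closure $\CC$ pick a compact $B_\mu \subset A$ with $\mu(A\setminus B_\mu)\le \eps/2$, take the setwise-open neighborhood $\{\nu:|\nu(A\setminus B_\mu)-\mu(A\setminus B_\mu)|\le\eps/2\}$, and pass to a finite subcover. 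Under the paper's reading, your total-boundedness step would need justification (the setwise topology is not in general metrizable); under the TV reading, your argument is impeccable. Either way the mechanism is identical.

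Second, a structural difference. The paper takes the finite subcover $\UC_{\mu_1},\ldots,\UC_{\mu_r}$ and forms the \emph{union} $C_i := \bigcup_{l=1}^r B_{\mu_l,i}$, obtaining a single compact $C_i \subset P_i$ independent of $n$ with $\mu_n(P_i\setminus C_i)\le\eps$ for every $n$; the positive $\delta$ then comes for free from the pairwise disjointness of the fixed compacts $C_1,\ldots,C_k$. You instead keep $N$ separate template families and let $C_{n,i}$ vary with $n$ through the assignment $j(n)$, extracting $\delta$ as the minimum over the finitely many templates. Both routes land in $\EC_{\Mis}$, but the paper's yields the slightly stronger conclusion that the approximating compacts can be chosen constant in $n$.

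Your closing remark that equicontinuity is not used in the argument itself is accurate and matches the paper's proof.
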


\begin{proof}
We first show that every Borel set $A\subset X$ can be approximated by compact subsets uniformly for all $\mu_n$. The strong topology is characterized by%
\begin{equation*}
  \mu_n \rightarrow \mu \quad \Leftrightarrow \quad \mu_n(A) \rightarrow \mu(A) \mbox{ for every Borel set } A \subset X.%
\end{equation*}
Let $\CC$ be the strong closure of $\mu_{1,\infty}$, and let $A\subset X$ be a Borel set and $\eps>0$. For each $\mu\in\CC$ there exists a compact set $B_{\mu}\subset A$ such that $\mu(A\backslash B_{\mu})\leq \eps/2$. Now take a neighborhood $\UC_{\mu}$ of $\mu$ in $\CC$ such that $|\nu(A\backslash B_{\mu}) - \mu(A\backslash B_{\mu})|\leq\eps/2$ for all $\nu\in\UC_{\mu}$. Then for every $\nu\in\UC_{\mu}$ we have%
\begin{equation*} 
  \nu(A\backslash B_{\mu}) \leq \mu(A\backslash B_{\mu}) + \frac{\eps}{2} \leq \eps.%
\end{equation*}
We can cover the compact set $\CC$ by finitely many of such neighborhoods, say $\UC_{\mu_1},\ldots,\UC_{\mu_r}$. Then $B := \bigcup_{i=1}^r B_{\mu_i}$ is a compact subset of $A$ which satisfies $\nu(A\backslash B) \leq \eps$ for all $\nu\in\CC$, so in particular for all $\nu = \mu_n$. Now let $\PC = \{P_1,\ldots,P_k\}$ be a finite measurable partition of the state space $X$. Then for any given $\eps>0$ we find compact sets $C_i\subset P_i$ such that $\mu_n(P_i \backslash C_i) \leq \eps$ for all $n\geq1$ and $i=1,\ldots,k$. Moreover, since the sets $C_i$ are pairwisely disjoint,%
\begin{equation*}
  \min_{1\leq i<j \leq k} \min\left\{\varrho(x,y)\ :\ (x,y) \in C_i \tm C_j\right\} > 0.%
\end{equation*}
This implies that the constant sequence $\PC_n\equiv\PC$ is an element of $\EC_{\Mis}$.%
\end{proof}

\begin{example}
Consider a system which is given by a periodic sequence%
\begin{equation*}
  f_{1,\infty} = \left\{f_1,f_2,\ldots,f_N,f_1,f_2,\ldots,f_N,\ldots\right\}.%
\end{equation*}
Let $\mu_1$ be an $f_1^N$-invariant probability measure on $X$ (which exists by the theorem of Krylov-Bogolyubov). Define%
\begin{equation*}
  \mu_2 := f_1\mu_1,\quad \mu_3 := f_2\mu_2,\quad\ldots,\quad\mu_N := f_{N-1}\mu_{N-1},%
\end{equation*}
and extend this to an $N$-periodic sequence%
\begin{equation*}
  \mu_{1,\infty} = \left\{\mu_1,\mu_2,\ldots,\mu_N,\mu_1,\mu_2,\ldots,\mu_N,\ldots\right\}.%
\end{equation*}
Then $\mu_{1,\infty}$ is an $f_{1,\infty}$-invariant sequence, which follows from%
\begin{equation*}
  f_N\mu_N = f_N f_{N-1}\mu_{N-1} = f_N f_{N-1} f_{N-2}\mu_{N-2} = \cdots = f_1^N \mu_1 = \mu_1.%
\end{equation*}
Clearly, $\{\mu_1,\ldots,\mu_N\}$ is compact.%
\end{example}

The assumption that the closure of $\{\mu_n\}$ should be compact still seems to be very restrictive. The next result provides another condition.%

\begin{lemma}\label{lem_basets}
Let $(X,\varrho)$ be a compact metric space with a Borel probability measure $\mu$. Let $A\subset X$ be a Borel set with $\mu(\partial A)=0$. Then $A$ can be approximated by compact subsets with zero boundaries, i.e.,%
\begin{equation*}
  \mu(A) = \sup\left\{\mu(K)\ :\ K \subset A \mbox{ compact with } \mu(\partial K) = 0\right\}.%
\end{equation*}
\end{lemma}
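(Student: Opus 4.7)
The plan is to exploit the assumption $\mu(\partial A) = 0$ to reduce the problem to approximating the open set $U := \inner A$ from inside, and then to thicken an inner-regular compact approximant by closed balls whose bounding spheres have zero $\mu$-measure. The key enabling fact is that for any $x \in X$ the spheres $S(x,r) := \{y : \varrho(x,y)=r\}$ are pairwise disjoint for distinct $r$, so $\{r > 0 : \mu(S(x,r))>0\}$ is at most countable. Thus ``good'' radii (those with $\mu(S(x,r))=0$) are dense in every interval $(0,\eta)$.

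First I would note that $A \setminus U \subset \partial A$, so $\mu(U) = \mu(A)$, and fix $\eps>0$. By inner regularity of the Borel measure $\mu$ applied to the open set $U$, I would choose a compact set $K\subset U$ with $\mu(U\setminus K) < \eps$. Since $K$ is compact and $X\setminus U$ is closed and disjoint from $K$, the number $d := \dist(K, X\setminus U)$ is strictly positive (or $+\infty$ if $U=X$).

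Next, for each $x \in K$ I would pick a radius $r_x \in (0, d/2)$ with $\mu(S(x,r_x)) = 0$, which is possible by the countability argument above. By compactness of $K$, finitely many open balls $B(x_1,r_{x_1}),\ldots,B(x_n,r_{x_n})$ cover $K$. I would then set
\begin{equation*}
K' := \bigcup_{j=1}^n \cl B(x_j, r_{x_j}).
\end{equation*}
The set $K'$ is compact as a finite union of closed balls, and $K \subset K'$. Because each $r_{x_j} < d$, each closed ball $\cl B(x_j,r_{x_j})$ lies in $U\subset A$, so $K'\subset A$. Hence $\mu(A\setminus K') \leq \mu(U\setminus K) < \eps$.

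Finally, I would check the boundary condition. In a metric space one has $\partial \cl B(x,r) \subset S(x,r)$, and for any finite family of sets $\partial(\bigcup_j B_j) \subset \bigcup_j \partial B_j$. Therefore
\begin{equation*}
  \partial K' \subset \bigcup_{j=1}^n S(x_j, r_{x_j}),
\end{equation*}
and $\mu(\partial K') = 0$ by the choice of the radii $r_{x_j}$. Since $\eps>0$ was arbitrary, this proves the claimed supremum equality. The only delicate step is the radius selection in step three (and keeping $r_x < d$ so the thickened balls stay inside $U$); once those ingredients are in place the rest is bookkeeping.
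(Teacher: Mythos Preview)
Your argument is correct and takes a genuinely different route from the paper's proof. Both proofs begin by passing to $U=\inner A$ via $\mu(\partial A)=0$, and both ultimately rely on the fact that in a probability space a pairwise disjoint family can have only countably many members of positive measure. The paper applies this principle to the sublevel sets $K_\eps=\{x\in\inner A:\dist(x,\partial A)\geq\eps\}$: it verifies that each $K_\eps$ is compact, that $\mu(K_\eps)\to\mu(A)$, and that the boundaries $\partial K_{\delta_1}$ and $\partial K_{\delta_2}$ are disjoint for $\delta_1\neq\delta_2$, so one can select $\eps_n\to 0$ with $\mu(\partial K_{\eps_n})=0$. You instead apply the disjointness trick to spheres around finitely many centres: starting from an inner-regular compact approximant $K\subset U$, you thicken it to a finite union of closed balls with radii chosen so that each bounding sphere is $\mu$-null and small enough that the union stays inside $U$.

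Your approach is arguably more elementary: it avoids the verification that $K_\eps$ is closed and the somewhat delicate argument that $\partial K_{\delta_1}\cap\partial K_{\delta_2}=\emptyset$, and it does not need to single out the case $\partial A=\emptyset$. The paper's approach, on the other hand, produces a canonical monotone family of approximants depending only on $A$, which can be convenient if one later needs the approximating compacta to be nested.
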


\begin{proof}
We can assume without loss of generality that $\partial A \neq \emptyset$, since otherwise $A$ is closed and hence compact itself. For every $\eps>0$ define the set%
\begin{equation*}
  K_{\eps} := \left\{x\in\inner A\ :\ \dist(x,\partial A) \geq \eps\right\}.%
\end{equation*}
We claim that each $K_{\eps}$ is a closed subset of $X$ and hence compact. To this end, consider a sequence $x_n \in K_{\eps}$ with $x_n \rightarrow x\in X$. By continuity of $\dist(\cdot,\partial A)$, it follows that $\dist(x,\partial A)\geq\eps$ and $x\in\cl A$. Assume to the contrary that $x\in\partial A$. Then $\eps\leq\dist(x,\partial A)=0$, a contradiction. Hence, $x\in K_{\eps}$. We further claim that $\mu(K_{\eps}) \rightarrow \mu(A)$ for $\eps\rightarrow0$. To show this, take an arbitrary strictly decreasing sequence $\eps_n\rightarrow 0$. Then $K_{\eps_n} \subset K_{\eps_{n+1}}$ for all $n\geq1$. Hence, by continuity of the measure $\mu$ and the assumption that $\mu(\partial A)=0$, it follows that%
\begin{equation*}
  \mu(A) = \mu(\inner A) = \mu\left(\bigcup_{n\geq1} K_{\eps_n}\right) = \lim_{n\rightarrow\infty}\mu(K_{\eps_n}).%
\end{equation*}
To conclude the proof, it suffices to show that one can choose the sequence $\eps_n$ such that $\mu(\partial K_{\eps_n})=0$. To this end, we first show that for $\delta_1 < \delta_2$ the boundaries of $K_{\delta_1}$ and $K_{\delta_2}$ are disjoint. Assume to the contrary that there exists $x \in \partial K_{\delta_1}\cap \partial K_{\delta_2}$. Then, by continuity of the $\dist$-function, $\dist(x,\partial A)\geq\delta_1$ and $\dist(x,\partial A)\geq\delta_2$. However, if one of these inequalities would be strict, the point $x$ would be contained in the interior of the corresponding set. Hence, $\dist(x,\partial A) = \delta_1 < \delta_2 = \dist(x,\partial A)$, a contradiction. Now, we can construct the desired sequence $\eps_n\rightarrow0$ as follows. Fix $n\in\N$ and assume to the contrary that $\mu(\partial K_{\eps}) > 0$ for all $\eps\in(1/(n+1),1/n)$. Define the sets $I_m := \{\eps\in(1/(n+1),1/n) : \mu(\partial K_{\eps})\geq 1/m\}$. Then $(1/(n+1),1/n) = \bigcup_{m\in\N}I_m$ and hence one of the sets $I_m$, say $I_{m_0}$, must be uncountable. However, since the boundaries of the $K_{\eps}$ are disjoint, this would imply that the set $\bigcup_{\eps\in I_{m_0}} \partial K_{\eps}$ has an infinite measure. Hence, we can take $\eps_n \in (1/(n+1),1/n)$ with $\mu(\partial K_{\eps_n}) = 0$.%
\end{proof}

\begin{prop}\label{prop_weaklystablesuffcon}
Let $(X_{1,\infty},f_{1,\infty})$ be an equicontinuous system such that $X_{1,\infty}$ is constant and let $\mu_{1,\infty} = \{\mu_n\}$ be an $f_{1,\infty}$-invariant sequence. Assume that the measures in the weak$^*$-closure of $\{\mu_n\}$ are pairwisely equivalent. Then $\EC_{\Mis}$ contains all constant sequences of partitions whose members have zero boundaries (with respect to the measures $\mu_n$).%
\end{prop}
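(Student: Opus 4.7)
The plan is to imitate the argument for the compact strong-closure case, but replace the uniform approximation of Borel sets by a uniform approximation of \emph{continuity sets} via the Portmanteau theorem. Fix a finite measurable partition $\PC = \{P_1,\ldots,P_k\}$ of $X$ with $\mu_n(\partial P_i)=0$ for all $n,i$, and an arbitrary $\eps>0$. Let $\CC$ denote the weak$^*$-closure of $\{\mu_n\}$ in the space of Borel probability measures on $X$. Since $X$ is a compact metric space, this set is weak$^*$-compact and metrizable. The first key step is to observe that pairwise equivalence of the measures in $\CC$ forces $\mu(\partial P_i)=0$ for \emph{every} $\mu\in\CC$ and every $i$: any $\mu\in\CC$ has the same null sets as the $\mu_n$'s (since each $\mu_n$ lies in $\CC$), and $\partial P_i$ is null for every $\mu_n$.

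Next, for each $\mu\in\CC$ and each $i$, I would apply Lemma \ref{lem_basets} (which requires $\mu(\partial P_i)=0$, now available) to pick a compact set $K_i^\mu\subset P_i$ with $\mu(\partial K_i^\mu)=0$ and $\mu(P_i\setminus K_i^\mu)\leq\eps/2$. Since $\partial P_i$ and $\partial K_i^\mu$ are $\mu$-null, both $P_i$ and $K_i^\mu$ are continuity sets for $\mu$, so by the Portmanteau theorem there is a weak$^*$-neighborhood $\UC_\mu$ of $\mu$ in $\CC$ such that
\begin{equation*}
  |\nu(P_i) - \mu(P_i)| \leq \eps/4 \quad\text{and}\quad |\nu(K_i^\mu) - \mu(K_i^\mu)| \leq \eps/4 \qquad\text{for all } \nu\in\UC_\mu,\ i=1,\ldots,k,
\end{equation*}
hence $\nu(P_i\setminus K_i^\mu)\leq\eps$ for every $\nu\in\UC_\mu$ and every $i$. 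By weak$^*$-compactness of $\CC$, finitely many such neighborhoods $\UC_{\mu^{(1)}},\ldots,\UC_{\mu^{(r)}}$ cover $\CC$.

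Now I would set $C_i := \bigcup_{j=1}^r K_i^{\mu^{(j)}}\subset P_i$, which is compact. For every $n\geq1$, the measure $\mu_n$ lies in some $\UC_{\mu^{(j)}}$, so $\mu_n(P_i\setminus C_i)\leq\mu_n(P_i\setminus K_i^{\mu^{(j)}})\leq\eps$, which is axiom (a) for the constant sequence $\PC_n\equiv\PC$ with $C_{n,i}\equiv C_i$. Axiom (b) is automatic: the $C_i$ are finitely many pairwise disjoint compact subsets of $X$, so $\delta := \min_{i\neq j}\dist(C_i,C_j)>0$ and the Bowen metric is irrelevant because the sequence is constant. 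This shows $\PC_n\equiv\PC\in\EC_{\Mis}$, as required.

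The main obstacle is the passage from $\mu_n(\partial P_i)=0$ to $\mu(\partial P_i)=0$ for limit measures $\mu\in\CC$: weak$^*$-convergence on its own would give $\mu_{n_k}(\partial P_i)\to \mu(\partial P_i)$ only if $\partial(\partial P_i)$ were a $\mu$-continuity set, which is not automatic. This is precisely the point where the hypothesis of pairwise equivalence on $\CC$ is essential, and it is also the reason why the more natural attempt, which would only assume weak$^*$-relative compactness of $\{\mu_n\}$, is insufficient.
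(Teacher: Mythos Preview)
Your proof is correct and follows essentially the same route as the paper's: use pairwise equivalence on the weak$^*$-closure $\CC$ to ensure every $\mu\in\CC$ has $\mu(\partial P_i)=0$, invoke Lemma~\ref{lem_basets} to pick compact approximants with $\mu$-null boundaries, apply the Portmanteau theorem to get a uniform estimate on a weak$^*$-neighborhood, extract a finite subcover, and take the union of the approximants. The only cosmetic difference is that the paper applies Portmanteau directly to the continuity set $P_i\setminus C_{\nu,i}$ (using $\partial(P_i\setminus C_{\nu,i})\subset\partial P_i\cup\partial C_{\nu,i}$), whereas you apply it separately to $P_i$ and $K_i^\mu$ and then subtract; both are equally valid.
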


\begin{proof}
Let $\CC$ denote the weak$^*$-closure of $\{\mu_n\}$. Consider a finite measurable partition $\PC = \{P_1,\ldots,P_k\}$ of the state space $X$ such that $\nu(\partial P_i)=0$, $1\leq i\leq k$, for one and hence all $\nu\in\CC$. Fix $\eps>0$ and pick $\nu\in\CC$. By Lemma \ref{lem_basets}, we find compact sets $C_{\nu,i} \subset P_i$ with $\nu(\partial C_{\nu,i}) = 0$, $1\leq i\leq k$, and%
\begin{equation*}
  \nu(P_i \backslash C_{\nu,i}) \leq \eps/2,\qquad 1\leq i\leq k.%
\end{equation*}
Since $\partial(P_i \backslash C_{\nu,i})\subset\partial P_i \cup \partial C_{\nu,i}$ and hence $\nu(\partial(P_i\backslash C_{\nu,i}))=0$, the Portmanteau theorem yields a weak$^*$-neighborhood $\UC_{\nu} \subset \CC$ of $\nu$ such that for every $\mu\in\UC_{\nu}$ it holds that $|\nu(P_i\backslash C_{\nu,i}) - \mu(P_i\backslash C_{\nu,i})| \leq \eps/2$. Therefore, $\mu(P_i\backslash C_{\nu,i}) \leq \eps$ for all $\mu\in\UC_{\nu}$. Since $\CC$ is weakly$^*$-compact, we can cover $\CC$ with finitely many of these neighborhoods, say $\UC_{\nu_1},\ldots,\UC_{\nu_r}$. Then $C_i := \bigcup_{i=1}^r C_{\nu_i}$ is a compact subset of $P_i$ for $1\leq i\leq k$ and for every $\mu \in \CC$ it holds that $\mu(P_i\backslash C_i) \leq \eps$, in particular for all $\mu = \mu_n$. This implies that the constant sequence $\PC_n\equiv\PC$ is in $\EC_{\Mis}$.%
\end{proof}

\begin{remark}
Note that every compact metric space admits finite measurable partitions of sets with arbitrarily small diameters and zero boundaries (cf.~\cite[Lem.~4.5.1]{KHa}).%
\end{remark}

\begin{example}
An example for systems with invariant sequences satisfying the assumption of Proposition \ref{prop_weaklystablesuffcon}, can be found in \cite{OSY}: Let $M$ be a compact connected Riemannian manifold. By $d(\cdot,\cdot)$ denote the Riemannian distance and by $m$ the Riemannian volume measure. For simplicity, we will assume that $m(M)=1$, so $m$ is a probability measure. For constants $\lambda>1$ and $\Gamma>0$ consider the set%
\begin{equation*}
  \EC(\lambda,\Gamma) := \left\{f \in \CC^2(M,M)\ :\ f \mbox{ expanding with factor } \lambda,\quad \|f\|_{\CC^2} \leq \Gamma \right\},%
\end{equation*}
where ``\emph{expanding with factor $\lambda$}'' means that $|Df_x(v)|\geq\lambda|v|$ holds for all $x\in M$ and all tangent vectors $v\in T_xM$. We will consider a NDS $f_{1,\infty} = \{f_n\}$ on $M$ with $f_n \in \EC(\lambda,\Gamma)$ for fixed $\lambda>1$ and $\Gamma>0$. It is clear that such a system is equicontinuous. We define%
\begin{equation*}
  \DC := \left\{\varphi:M\rightarrow\R\ :\ \varphi>0,\ \mbox{Lipschitz},\ \int \varphi \rmd m = 1 \right\}%
\end{equation*}
and for every $L>0$ the set%
\begin{equation*}
  \DC_L := \left\{\varphi \in \DC\ :\ \left|\frac{\varphi(x)}{\varphi(y)}-1\right| \leq L d(x,y) \mbox{ if } d(x,y)<\eps\right\},%
\end{equation*}
where $\eps>0$ is a fixed number (depending on $\lambda$ and $\Gamma$). Note that%
\begin{equation*}
  \DC = \bigcup_{L>0}\DC_L,%
\end{equation*}
since for every $\varphi\in\DC$ we have%
\begin{equation*}
  \left|\frac{\varphi(x)}{\varphi(y)}-1\right| = \frac{1}{\varphi(y)}\left|\varphi(x) - \varphi(y)\right| \leq \frac{\Lip(\varphi)}{\min \varphi}d(x,y).%
\end{equation*}
For any expanding map $f:M\rightarrow M$ we write%
\begin{equation*}
  \PC_f(\varphi)(x) = \sum_{y\in f^{-1}(x)}\frac{\varphi(y)}{|\det Df(y)|},\qquad \PC_f(\varphi):M\rightarrow\R,%
\end{equation*}
for the Perron-Frobenius operator associated with $f$ acting on densities $\varphi\in\DC$. Note that this makes sense, since the expanding maps are covering maps, and hence the sets $f^{-1}(x)$ are finite, all having the same number of elements.%

Now let $\varphi\in\DC$. We claim that the $f_{1,\infty}$-invariant sequence, defined by $\mu_1 := \varphi\rmd m$ and $\mu_n := f_1^{n-1} \mu_1$ for all $n\geq2$, has the property that the elements of the weak$^*$-closure of $\{\mu_n\}_{n\in\N}$ are pairwisely equivalent. To show this, let $L>0$ be chosen such that $\varphi\in\DC_L$ and note that $\mu_{n+1} = \PC_{f_1^n}(\varphi)\rmd m$ for all $n$. By \cite[Prop.~2.3]{OSY}, there exist $L^*>0$ and $\tau\geq 1$ such that $\PC_{f_1^n}(\varphi)\in\DC_{L^*}$ for all $n\geq\tau$. Hence, we may assume that $\PC_{f_1^n}(\varphi) \in \DC_{L^*}$ for all $n$. We will first show that the densities in $\DC_{L^*}$ are uniformly bounded away from zero and infinity and that they are equicontinuous. Assume to the contrary that there are $\varphi_n \in \DC_{L^*}$ and $x_n\in M$ such that $\varphi_n(x_n) \geq n$. Without loss of generality, we may assume that $\varphi_n(x_n) = \max_{x\in M}\varphi_n(x)$. Choosing $\delta \in (0,\eps]$ with $L\delta<1$, we obtain%
\begin{eqnarray*}
  1 &=& \int_M \varphi_n \rmd m \geq \int_{B(x_n,\delta)} \varphi_n(x) \rmd m(x) = \int_{B(x_n,\delta)} \frac{\varphi_n(x)}{\varphi_n(x_n)} \varphi_n(x_n) \rmd m(x)\allowdisplaybreaks\\
 &\geq& n \int_{B(x_n,\delta)} \left(1 - L d(x,x_n)\right) \rmd m(x)\allowdisplaybreaks\\
 &\geq& n \int_{B(x_n,\delta)} \left(1 - L\delta\right) \rmd m = n\left(1-L\delta\right) m(B(x_n,\delta)).%
\end{eqnarray*}
Since $m(B(x_n,\delta))$ is bounded away from zero, this is a contradiction. Hence, the functions in $\DC_{L^*}$ are uniformly bounded by some constant $K$. This immediately implies equicontinuity, since for $x,y\in M$ with $d(x,y)<\eps$ we have%
\begin{equation*}
  |\varphi(x)-\varphi(y)| = \varphi(y) \left|\frac{\varphi(x)}{\varphi(y)} - 1\right| \leq K L d(x,y).%
\end{equation*}
To show that the $\varphi\in\DC_{L^*}$ are uniformly bounded away from zero, assume to the contrary that there exist $\varphi_n\in\DC_{L^*}$ and $x_n\in M$ such that $\varphi_n(x_n) \rightarrow 0$. By compactness, we may assume that $x_n\rightarrow x$. Then%
\begin{equation*}
  |\varphi_n(x)-\varphi_n(x_n)| \leq KL d(x,x_n) \rightarrow 0 \qquad \Rightarrow \qquad \varphi_n(x) \rightarrow 0.%
\end{equation*}
Now pick some $y\in B(x,\eps)$. Then%
\begin{equation*}
  |\varphi_n(x) - \varphi_n(y)| = \varphi_n(x)\left|1 - \frac{\varphi_n(y)}{\varphi_n(x)}\right| \leq \varphi_n(x)L\eps \rightarrow 0.%
\end{equation*}
Using the theorem of Arzel\'a-Ascoli, we can choose a uniformly convergent subsequence $\varphi_{m_n} \rightarrow \varphi$. The above argument shows that the closed set $\varphi^{-1}(0)$ is open, and by assumption it is nonempty. Hence, it is equal to $M$. This is a contradiction to the integral condition $\int\varphi_{m_n}\rmd m = 1$, which implies $\int\varphi\rmd m = 1$. Now we prove the claim: Let $\nu$ be a weak$^*$ limit point of $\{\mu_n\}$ and let $\varphi_1 := \varphi$, $\varphi_{n+1} := \PC_{f_1^n}\varphi$. Then, for a subsequence $m_n$ and for every continuous $g:M\rightarrow\R$ we have%
\begin{equation*}
  \int_M g \varphi_{m_n} \rmd m \rightarrow \int_M g \rmd \nu.%
\end{equation*}
On the other hand, by the theorem of Arzel\'a-Ascoli, we may assume that $\varphi_{m_n}$ converges uniformly to some $\varphi^*$, which is bounded away from zero and satisfies $\int \varphi^* \rmd m = 1$. Hence,%
\begin{equation*}
  \int_M g \varphi_{m_n} \rmd m \rightarrow \int_M g \varphi^* \rmd m,%
\end{equation*}
implying $\nu = \varphi^* \rmd m$.%
\end{example}

\begin{remark}
The above example can be regarded as a nonautonomous version of the classical result of Krzyzewski and Szlenk \cite{KSz} which asserts that every expanding $\CC^2$-map has an absolutely continuous invariant measure.%
\end{remark} 

\begin{remark}
In view of Proposition \ref{prop_weaklystablesuffcon} and Proposition \ref{prop_emisequiconj}, the most general criterion which guarantees a large Misiurewicz class for an equicontinuous system $(X_{1,\infty},f_{1,\infty})$ with invariant sequence $\mu_{1,\infty}$ is the existence of an equiconjugacy to a system which satisfies the assumptions of Proposition \ref{prop_weaklystablesuffcon}. That is, there exists a compact metric space $X$ and an equicontinuous sequence $\{\pi_n\}$ of homeomorphisms $\pi_n:X_n \rightarrow X$ such that all elements of the weak$^*$-closure of the set $\{\pi_n\mu_n\}$ are equivalent.%
\end{remark}

\section{Concluding Remarks and Open Questions}%

In this paper, we introduced a notion of metric entropy for quite general nonautonomous dynamical systems and studied its elementary properties, in particular its relation to the topological entropy defined by Kolyada, Misiurewicz, and Snoha. The number of open questions about this new quantity tends to infinity. We restrict ourselves to a very short list of questions and topics for future research:%
\begin{itemize}
\item In order to obtain a fruitful theory of metric entropy for nonautonomous systems, it seems inevitable to find appropriate analogues of the notion of ergodicity. Describing ergodicity as the property that the state space cannot be broken apart into two invariant subsets of positive measure, one can use the same definition for a metric NDS on a single probability space. However, this definition is probably too strict. It seems more likely that for different purposes different analogues of ergodicity of varying strength will fit.%
\item One of the next steps in the further development of the entropy theory for nonautonomous systems certainly is the study of the question to which extent the variational inequality (Theorem \ref{thm_general_varinequality}) can be extended to a full variational principle. Another interesting question is under which conditions there exist reasonably small generating sets for the Misiurewicz class.%
\item The classical Pesin formula and Margulis-Ruelle inequality relate the metric entropy of a diffeomorphism to its Lyapunov exponents, given by the Multiplicative Ergodic Theorem. It is an interesting and probably very far-reaching question to which extent such results can be transferred to the nonautonomous case.%
\item The notion of metric entropy in this paper also generalizes the metric sequence entropy introduced in Kushnirenko \cite{Kus}. It might be an interesting topic for future research to look for generalizations of the known results about metric sequence entropy.%
\end{itemize}

\small{

}

\end{document}